\documentclass{amsart}

\usepackage{amsmath,amssymb}
\usepackage{graphicx}
\usepackage{epstopdf}

\newcommand{\arr}{\longrightarrow}
\newcommand{\R}{\mathbb{R}}
\newcommand{\Z}{\mathbb{Z}}

\newcommand{\be}{\mathsf{s}}
\newcommand{\en}{\mathsf{r}}
\newcommand{\G}{\mathfrak{G}}
\newcommand{\symm}{\mathsf{S}}
\newcommand{\alt}{\mathsf{A}}
\newcommand{\full}[1]{\mathsf{F}(#1)}

\newcommand{\F}{\mathcal{F}}
\newcommand{\X}{\mathcal{X}}


\newcommand{\mapdown}[1]%
{\Big\downarrow\rlap{$\vcenter{\hbox{$\scriptstyle#1$}}$}}

\newtheorem{theorem}{Theorem}[section]
\newtheorem{lemma}[theorem]{Lemma}
\newtheorem{proposition}[theorem]{Proposition}
\newtheorem{corollary}[theorem]{Corollary}

\theoremstyle{definition}
\newtheorem{defi}{Definition}[section]
\newtheorem{example}{Example}[section]

\title{Simple groups of dynamical origin}
\author{V. Nekrashevych}

\begin{document}

\maketitle

\begin{abstract}
We associate with every \'etale groupoid $\G$ two normal subgroups
$\symm(\G)$ and $\alt(\G)$ of
the topological full group of $\G$, which are analogs of the symmetric
and alternating groups. We prove that if $\G$ is a minimal groupoid of
germs (e.g., of a group action), then $\alt(\G)$ is simple and is
contained in every non-trivial normal subgroup of the full group. We
show that if $\G$ is expansive (e.g., is the groupoid of germs of
an expansive action of a group), then $\alt(\G)$ is finitely
generated. We also show that $\symm(\G)/\alt(\G)$ is a quotient of
$H_0(\G, \Z/2\Z)$.
\end{abstract}

\section{Introduction}

We associate with every group or semigroup of local homeomorphisms of
the Cantor set (more generally with an \'etale groupoid with space of
units homeomorphic to the Cantor set) groups $\alt(\G)$ and $\symm(\G)$ that are analogs
of the alternating and symmetric groups. They are subgroups of the
\emph{topological full group} of the action. We prove that if the
action is minimal, i.e., if all orbits are dense, then $\alt(\G)$ is
simple. This produces a very wide class of simple groups, that
includes some well known examples: block-diagonal direct limits of
direct products of finite alternating groups, derived subgroups of topological full groups of minimal
homeomorphisms of the Cantor set, derived subgroups of the
Higman-Thompson groups and full groups of one-sided shifts of finite
type~\cite{matui:fullonesided}, and many interesting new examples.
We show that if the action is expansive, then $\alt(\G)$ is
finitely generated. In a subsequent paper we use these results to
construct first examples of simple groups of intermediate growth.

Topological full groups (of $\Z$-actions) were defined by T.~Giordano,
I.~F.~Putnam, and C.~F.~Skau in connection with the theory of orbit
equivalence of minimal homeomorphisms of the Cantor set,
see~\cite{gior:full}. They also appeared earlier (for a special class of group actions) in the paper~\cite{krieger:homgroups}.
For a homeomorphism
$\tau:\mathcal{X}\arr\mathcal{X}$,
the topological full group $[[\tau]]$ is the group of all
homeomorphisms $\phi:\mathcal{X}\arr\mathcal{X}$ such that for every
$x\in\mathcal{X}$ there exists $n\in\Z$ and a neighborhood $U$ of $x$
such that $\phi|_U=\tau^n|_U$.

The definition is local in the sense that $[[\tau]]$ depends only on the germs of the iterations of
$\tau$. The notion of a topological full group has therefore a very natural generalization
to arbitrary semigroups of local homeomorphisms, and a natural setting for such
a generalization is the theory of  \'etale groupoids. A group $G$ of homeomorphisms
of a topological space $\X$ is a full topological group of some semigroup of local
homeomorphisms if the following condition is satisfied. If
$\phi:\X\arr\X$ is a homeomorphism, and for every $x\in\X$ there
exists a neighborhood $U$ of $x$ such that $\phi|_U$ coincides with
the restriction of an element of $G$ to $U$, then $\phi$ belongs to $G$.
There are many examples of topological full groups in this sense: the Thompson
groups~\cite{intro_tomp}, a version of the Y.~Lodha and J.~Moore’s
example~\cite{lodhamoore}, groups associated
with expanding maps~\cite{nek:fpresented}, diagram groups~\cite{gubasapir:diagram}, etc.

Topological full groups of \'etale groupoids were studied by H.~Matui in a series
of papers~\cite{matui:fullI,matui:etale,matui:fullonesided}.
In particular, he showed that for some classes of groupoids the
derived subgroup of the topological full group is simple, and that in some cases
(e.g., for minimal subshifts) it is also finitely generated, or even finitely presented
(for groupoids associated with one-sided shifts of finite type). For a survey of results on full topological groups, see~\cite{cornulier:pleinstopologiques}.

It was shown in~\cite{ChJN} that if an action of the free abelian group $\Z^n$ on a Cantor set is minimal and expansive (i.e., conjugate to a subshift), then the derived subgroup of the topological full group is finitely generated.

We prove the following generalizations (see Theorems~\ref{th:simple} and~\ref{th:finitelygenerated} below) of the above mentioned results. They are valid for a much wider class of groupoids, though we leave open the question when the group $\alt(\G)$ coincides with the derived subgroup of the topological full group.

\begin{theorem}
Let $\G$ be a minimal groupoid of germs. Then $\alt(\G)$ is simple,
and is contained in every non-trivial normal subgroup of the
topological full group of $\G$.
\end{theorem}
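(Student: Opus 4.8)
The plan is to reduce the theorem to three assertions and then combine them. Recall that by construction $\alt(\G)$ is generated by its \emph{$3$-cycles}: the elements of $\full{\G}$ associated with a multisection having three pairwise disjoint tiles and the cyclic permutation of those tiles (the $3$-cycles generate every finite alternating group, and each generator of $\alt(\G)$ comes from an alternating permutation of the tiles of some multisection). I will prove: \textbf{(i)} every non-trivial normal subgroup $N$ of $\alt(\G)$ contains a $3$-cycle; \textbf{(ii)} the normal closure in $\alt(\G)$ of any single $3$-cycle is all of $\alt(\G)$; \textbf{(iii)} for every non-trivial normal subgroup $N'$ of $\full{\G}$ the intersection $N'\cap\alt(\G)$ is non-trivial. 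Granting these, (i) and (ii) show $\alt(\G)$ is simple, and then for $N'\triangleleft\full{\G}$ non-trivial, $N'\cap\alt(\G)$ is a non-trivial normal subgroup of $\alt(\G)$, hence equals $\alt(\G)$, i.e.\ $\alt(\G)\subseteq N'$. (Throughout I use that minimality forces every non-empty compact open subset of the unit space to contain multisections with arbitrarily many tiles, so in particular $\alt(\G)\ne 1$ and $3$-cycles abound.)

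For (i) and (iii) the mechanism is a double commutator. Given $1\ne g$ in $N$ (resp.\ in $N'$): since $\G$ is a groupoid of germs, $g$ moves some point of the unit space, so I can fix a non-empty compact open $U$ with $g(U)\cap U=\emptyset$, and by minimality pick $h,h'\in\alt(\G)$ supported in $U$ with $[h^{-1},h']$ a $3$-cycle (take $h,h'$ to be two suitable $3$-cycles on a $5$-tile multisection inside $U$). Because $g(U)$ is disjoint from $U$, the conjugate $ghg^{-1}$ is supported in $g(U)$, hence is the identity on $U$, preserves $U$ and $g(U)$, and therefore so does $[g,h]=(ghg^{-1})h^{-1}$, which moreover restricts to $h^{-1}$ on $U$; a short computation using that $h'$ is supported in $U$ then gives $[[g,h],h']=[h^{-1},h']$. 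Since $N$ is normal and contains $g$, it contains $[g,h]$ and hence the $3$-cycle $[h^{-1},h']$; this is (i). For (iii) less is needed: $ghg^{-1}$ is again an alternating element, so $[g,h]=(ghg^{-1})h^{-1}\in\alt(\G)$, and $[g,h]\ne 1$ because $ghg^{-1}\ne h$; thus $1\ne[g,h]\in N'\cap\alt(\G)$.

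For (ii), fix a $3$-cycle $\sigma$ on tiles $W_0,W_1,W_2$ and set $N=\langle\langle\sigma\rangle\rangle$. The first step is a \emph{shrinking lemma}: for every non-empty compact open $W_0'\subseteq W_0$, the restricted $3$-cycle $\sigma|_{W_0'}$ (on $W_0',\sigma(W_0'),\sigma^2(W_0')$) lies in $N$. To see this, write $W_0=W_0'\sqcup W_0''$, so $\sigma=\sigma|_{W_0'}\cdot\sigma|_{W_0''}$ with commuting factors, and — using minimality to carry (a subdivision of) the support of $\sigma|_{W_0''}$ off the support of $\sigma$ into a fresh region — produce $\rho\in\alt(\G)$ fixing the support of $\sigma|_{W_0'}$ and conjugating $\sigma|_{W_0''}$ to a $3$-cycle $\tau''$ with support disjoint from that of $\sigma$. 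Then $\sigma\cdot(\rho\sigma\rho^{-1})^{-1}=\sigma|_{W_0''}\cdot(\tau'')^{-1}$ lies in $N$, and its normal closure computed inside the finite alternating group on the (mutually $\G$-equivalent) tiles it involves contains $\sigma|_{W_0''}$; hence $\sigma|_{W_0''}\in N$ and $\sigma|_{W_0'}=\sigma\cdot(\sigma|_{W_0''})^{-1}\in N$. The second, harder step is \emph{transport}: $N$ contains every $3$-cycle $\tau$. By the shrinking lemma (applied to $\tau$) and compactness of the bottom tile $V_0$ of $\tau$, it is enough to give each $y\in V_0$ a compact open neighbourhood $P$ with $\tau|_P\in N$. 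Using minimality, choose a compact open bisection $g$ of $\G$ from a neighbourhood $P\ni y$, $P\subseteq V_0$, onto a compact open $S\subseteq W_0$, arranging that the six tiles $S,\sigma(S),\sigma^2(S),P,\tau(P),\tau^2(P)$ are pairwise disjoint; build $\kappa\in\alt(\G)$ from $g$ and the germ maps of $\sigma$ and $\tau$ so that $\kappa$ intertwines $\sigma|_S$ with $\tau|_P$; then $\tau|_P=\kappa(\sigma|_S)\kappa^{-1}\in N$ since $\sigma|_S\in N$ by the shrinking lemma. As the $3$-cycles generate $\alt(\G)$, this proves (ii).

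The main obstacle is common to the shrinking and transport steps: the conjugators $\rho$ and $\kappa$ must be chosen inside $\alt(\G)$, not merely inside $\full{\G}$. A conjugator built naively as a product of transpositions moving one triple of tiles onto another is \emph{odd} — its class in $H_0(\G,\Z/2)$ is that of a single tile, generally non-zero — and this class is the only obstruction to correcting it. The device I would use is: work with tiles small enough that the relevant triple can be carried (via bisections supplied by minimality) entirely into a fresh region disjoint from everything in sight, and then compensate for the parity by one further transposition between two $\G$-equivalent copies of a tile inside that fresh region, so that the combined class is zero. Making this precise — choosing the subdivisions finely enough, keeping all the tiles disjoint, not losing the neighbourhood of $y$, and passing from the resulting local statements to a finite cover of $V_0$ — is where minimality is used essentially and where the real work of the proof lies.
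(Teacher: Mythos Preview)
Your approach is sound and follows the same overall strategy as the paper: first a double-commutator trick to land an alternating element of a small multisection inside $N$, then a spreading argument using minimality and conjugation in $\alt(\G)$ to reach all of $\alt(\G)$. The implementations differ in two places worth noting. First, the paper packages the spreading step into a reusable ``covering'' lemma (Proposition~\ref{pr:covering}): if a degree $\ge 5$ multisection $\F$ is covered by sub-multisections $\F^{(k)}$, then $\alt(\F)$ lies in the group generated by the $\alt(\F^{(k)})$; this does the work of your shrinking lemma and lets the transport step finish by compactness in one line, without a separate induction on pieces. Second, for the conjugator the paper sidesteps the parity obstruction you identify by introducing, for each tile $F_{i,i}'$ to be moved into $U$, an auxiliary third tile (the sets $\be(D_i)$ in the proof), so that the conjugator $g$ is a product of disjoint $3$-cycles $C_i = G_i \cup D_i^{-1} \cup D_iG_i^{-1}$ and hence manifestly in $\alt(\G)$ --- no parity bookkeeping required. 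Your device of compensating with an extra transposition in a fresh region also works, but the paper's trick is cleaner and avoids exactly the ``real work'' you flag at the end. Finally, the paper actually proves the slightly stronger assertion that any non-trivial subgroup of $\full\G$ \emph{normalized by} $\alt(\G)$ (not necessarily normal in $\full\G$) contains $\alt(\G)$; this subsumes your (i), (iii), and the final deduction in a single statement, since the double-commutator step already shows $\alt(\F)\le N$ for \emph{every} small multisection $\F$ in $U$, giving $\alt(\G|_U)\le N$ rather than just one $3$-cycle.
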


\begin{theorem}
If $\G$ is expansive and has infinite orbits, then $\alt(\G)$ is finitely generated.
\end{theorem}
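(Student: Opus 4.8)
The plan is to produce a finite generating set for $\alt(\G)$ out of a finite set of "local" elements coming from the expansivity structure, and then use the already-proven simplicity of $\alt(\G)$ (Theorem~\ref{th:simple}) together with the structural description of $\alt(\G)$ as generated by "alternating-type" elements supported on small bisections. Expansivity should furnish a finite family of compact open bisections $U_1,\dots,U_n$ of $\G$ that, together with the unit space, generate $\G$ as an étale groupoid; equivalently, there is a finite symbolic coding of the groupoid in which every germ is a word in the $U_i$'s. I would first extract from this a finite "alphabet" of bisections and record that any compact open bisection of $\G$, and in particular any element of the topological full group, decomposes as a finite disjoint union of pieces each of which is a product of the $U_i^{\pm 1}$ restricted to clopen subsets of the unit space.

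Next I would fix a convenient generating family for $\alt(\G)$ in the style of the construction in the earlier sections: elements that cyclically permute, in "alternating" (even) fashion, the pieces of a finite partition subordinate to a small bisection, the prototype being the 3-cycles on triples of disjoint bisections $(W, gW, g^2W)$ with disjoint ranges. The key reduction is to show that every such generator is a product of \emph{boundedly many} conjugates — by elements of the topological full group, or better by elements already known to lie in $\alt(\G)$ — of finitely many fixed "basic" 3-cycles attached to the alphabet bisections $U_i$. This is where expansivity does the real work: because every germ is a bounded-length word in the $U_i$, any small clopen set on which we want to build a 3-cycle can be "transported" to a standard location by a groupoid element that is itself a short word in the alphabet, hence an element of $[[\G]]$ expressible through the finite data; conjugating the finitely many basic 3-cycles by these transporters, and multiplying, reconstructs an arbitrary alternating generator.

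Then I would assemble the argument: let $S$ be the finite set consisting of the basic 3-cycles on the alphabet bisections together with a finite set of transporter elements (themselves chosen inside $\alt(\G)$, using that infinite orbits and minimality give enough room to realize each transporter as an even/"alternating" element — this is the point where "infinite orbits" is used, to guarantee that the requisite disjoint translates of bisections exist). By the previous paragraph, $\langle S\rangle$ contains every alternating generator, hence $\langle S\rangle=\alt(\G)$. One must check that $S$ can indeed be taken finite: the number of alphabet bisections is finite by expansivity, the word length needed to reach a standard location is bounded (again expansivity), and only finitely many "shapes" of basic 3-cycle arise up to the finite symbolic data, so the whole generating set is finite.

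The main obstacle I anticipate is the middle step: controlling the combinatorics of how an arbitrary compact open bisection, and the finite clopen partitions on which an alternating element is supported, refine against the partitions coming from the alphabet words — i.e., showing the "transport to a standard location" can be done with uniformly bounded complexity rather than complexity growing with how finely one has subdivided. Concretely, one needs a pigeonhole/compactness argument: since the unit space is a Cantor set and the $U_i$ generate, every sufficiently fine clopen set is contained in a cylinder determined by a bounded-length word, and two cylinders of the same "type" are interchanged by a transporter drawn from the finite set; patching these local transporters into a single element of $[[\G]]$ that is moreover in $\alt(\G)$ requires care about ranges being disjoint, which is exactly where having infinite orbits (room to spread out) is essential. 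Handling this patching cleanly, and verifying the parity bookkeeping so that the constructed elements land in $\alt(\G)$ rather than merely in $\symm(\G)$, is the technical heart of the proof.
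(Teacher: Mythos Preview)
Your outline has the right shape in one respect---start from a finite expansive cover $\mathcal S$ by bisections, build a finite set of ``basic'' alternating elements from it, and show every generator of $\alt(\G)$ lies in the group they generate---but the mechanism you propose for the crucial step is based on a misconception. You write that ``every germ is a bounded-length word in the $U_i$'' and that ``the word length needed to reach a standard location is bounded (again expansivity)''. This is false: expansivity says only that the products $F_1\cdots F_n$, $F_i\in\mathcal S\cup\mathcal S^{-1}$, form a \emph{basis} of the topology of $\G$; it imposes no bound on $n$. On the contrary, small clopen sets (the ones you need in order to realize an arbitrary multisection) correspond to \emph{long} words, so your pigeonhole/compactness argument as stated cannot work, and the ``uniformly bounded complexity'' you hope for simply does not hold. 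A second issue: you invoke simplicity of $\alt(\G)$, but Theorem~\ref{th:simple} requires minimality, which is not among the hypotheses here, and in any case the paper's proof of finite generation makes no use of simplicity.

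What actually replaces the missing bound is an \emph{induction on word length}, and the engine that drives it is algebraic rather than metric. The paper fixes a partition $\mathcal P$ of $\G^{(0)}$ fine enough that sources and ranges of elements of $\mathcal S$ lie in different pieces, and builds a finite set $M$ of degree~5 multisections whose $F_{0,1}$-components cover $S\cup S^2\cup S^3$. The key lemma then shows, by induction on $n$, that for every $\gamma\in F_1\cdots F_n$ (with $\be(\gamma),\en(\gamma)$ in different pieces of $\mathcal P$) there is a degree~5 multisection $\mathcal H$ with $\gamma\in H_{0,1}\subset F_1\cdots F_n$ and $\alt(\mathcal H)\le\langle\alt(\F):\F\in M\rangle$. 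The inductive step factors $\gamma=\gamma_0'\cdot(\gamma'\gamma_2\cdots\gamma_n)$ with $\gamma_0'\in S^3$, applies the hypothesis to the length-$n$ tail, and then uses Proposition~\ref{pr:unionintersection}---which says that if two degree~$\ge 3$ multisections overlap in exactly one domain-component, then the alternating group of their ``join'' is contained in the group generated by their individual alternating groups---to merge the resulting multisection with one from $M$. Finally Proposition~\ref{pr:covering} (that $\alt(\F)$ for a degree~$\ge 5$ multisection lies in the group generated by $\alt(\F^{(k)})$ for any covering family $\{\F^{(k)}\}$) turns these local pieces into all of $\alt_5(\G)=\alt(\G)$. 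The ideas you are missing, then, are precisely these two combination lemmas and the induction they enable; the ``transport by conjugation'' picture does not survive the unboundedness of word length.
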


The expansivity condition for groupoids is a natural generalization of the notion
of an expansive dynamical system. In particular, the groupoid of an expansive
action of a group on a Cantor set is expansive.

We illustrate these results by defining groups naturally associated
with quasicrystals (e.g., Penrose tilings). We associate with every
repetitive aperiodic Delaunay set with finite local complexity a
finitely generated simple group, see Theorem~\ref{th:quasicrystals}. This is a generalization of a result of~\cite{ChJN} about the Penrose tiling.

We also study the quotient $\symm(\G)/\alt(\G)$. We prove that it is an
image of the zero homology group $H_0(\G, \Z/2\Z)$ under a
natural homomorphism.

In all known to us examples, $\alt(\G)$ coincides with the derived
subgroup $\mathsf{D}(\G)$ of the topological full group, while $\symm(\G)$ coincides
with the kernel $\mathsf{K}(\G)$ of the natural \emph{index map} from the topological
full group to $H_1(\G, \Z)$. In particular, it follows from the results of
H.~Matui, that this is true for \emph{almost finite} and for \emph{purely infinite}
groupoids. If $\G$ is such that $\alt(\G)=\mathsf{D}(\G)$ and
$\symm(\G)=\mathsf{K}(\G)$, then our results confirm for $\G$ a
conjecture of H.~Matui about the structure of the
abelianization of the topological full group (see~\cite[Conjecture~2.9]{matui:product}). It would be interesting to understand when the equalities $\alt(\G)=\mathsf{D}(\G)$ and $\symm(\G)=\mathsf{K}(\G)$ hold.

\subsection*{Acknowledgments.}  The author is grateful for discussions and remarks to the first version of the paper by M.~Lawson, N.~Matte~Bon, H.~Matui, and K.~Medynets.

\section{Etale groupoids}

\subsection{Basic definitions}
A \emph{groupoid} is a small category of isomorphisms,
i.e., a set $\G$ with partially defined multiplication
$\gamma_1\gamma_2$ and everywhere defined involutive operation of
taking inverse $\gamma^{-1}$ satisfying the following axioms.
\begin{enumerate}
\item If $\gamma_1\gamma_2$ and $(\gamma_1\gamma_2)\gamma_3$ are
  defined, then $\gamma_2\gamma_3$ and $\gamma_1(\gamma_2\gamma_3)$
  are defined and
  $(\gamma_1\gamma_2)\gamma_3=\gamma_1(\gamma_2\gamma_3)$.
\item The products $\gamma\gamma^{-1}$ and $\gamma^{-1}\gamma$ are always
  defined. If $\gamma_1\gamma_2$ is defined, then
  $\gamma_1=\gamma_1\gamma_2\gamma_2^{-1}$ and $\gamma_2=\gamma_1^{-1}\gamma_1\gamma_2$.
\end{enumerate}

A \emph{topological groupoid} is a groupoid together with a topology on it such that the
operations of multiplication and taking inverse are continuous. We \emph{do not} require
that the groupoid is Hausdorff.

The elements of the form $\gamma\gamma^{-1}$ are called
\emph{units}. We denote the set of units of a groupoid
$\G$ by $\G^{(0)}$, the set of composable pairs by $\G^{(2)}$, the source
and range maps by $\be$ and $\en$, so that
\[\be(\gamma)=\gamma^{-1}\gamma,\qquad\en(\gamma)=\gamma\gamma^{-1}\]
for all $\gamma\in\G$.

Units $x, y\in\G^{(0)}$ belong to one \emph{$\G$-orbit} if there exists
$\gamma\in\G$ such that $\be(\gamma)=x$ and $\en(\gamma)=y$.
A groupoid $\G$ is said to be
\emph{minimal} if all its orbits are dense in $\G^{(0)}$.

The \emph{isotropy group} of a unit $x\in\G^{(0)}$ is the group
$\{\gamma\in\G\;:\;\be(\gamma)=\en(\gamma)=x\}$.
The groupoid $\G$ is said to be \emph{principal} if all its isotropy
groups are trivial, i.e., if $\be(\gamma)=\en(\gamma)=x$ implies
$\gamma=x$.

The groupoid is said to be \emph{essentially principal} if the set of
points with trivial isotropy group is dense in $\G^{(0)}$.

For $U\subset\G^{(0)}$ we denote by $\G|_U$ the subgroupoid
$\{\gamma\in\G\;:\;\be(\gamma), \en(\gamma)\in U\}$, called the
\emph{restriction} of $\G$ to $U$. Note that the orbits of $\G_U$ are
equal to the intersections of the orbits of $\G$ with $U$.

\begin{example}
\label{examp:action}
Let $G$ be a group acting by homeomorphisms on a topological space $\mathcal{X}$. Then
$G\times\mathcal{X}$ is a groupoid in a natural way. Its elements
are multiplied according to the rule
\[(g_1, x_1)(g_2, x_2)=(g_1g_2, x_2),\]
where the product is defined if and only if $g_2(x_2)=x_1$. We have
$\be(g, x)=x$ and $\en(g, x)=g(x)$, where a point $x\in\mathcal{X}$ is
identified with $(1, x)$. This groupoid is called the
\emph{groupoid of the action}.
\end{example}

\begin{example}
\label{examp:germs}
Let $G$ be a discrete group acting by homeomorphisms on $\X$.
Introduce an equivalence relation on $G\times\X$ by the rule: $(g_1, x_1)\sim (g_2, x_2)$ if
$x_1=x_2$ and there exists a neighborhood $U$ of $x_1$ such that
$g_1|_U=g_2|_U$. The equivalence relation $\sim$ agrees with the
multiplication in the groupoid of the action, so that the quotient
$G\times\mathcal{X}/\sim$ is also a groupoid. We call it the
\emph{groupoid of germs} of the action. The $\sim$-equivalence classes
are called \emph{germs}. For every germ $(g, x)\in\G$ and every
neighborhood $U$ of $x$ the set of germs $(g, y)$ for $y\in U$
is considered to be an open set of $\G$, and the
collection of all such open sets is a basis of topology on the groupoid of germs.
\end{example}

\noindent
\textbf{Standing assumptions.} All groupoids in this paper are
\'etale (see below) and the
unit space $\G^{(0)}$ is homeomorphic to the Cantor set. Some of our definition (e.g., bisection,
compact generation) are adapted specifically for the case of Cantor
set space of units, and should be changed in the general case.

\begin{defi}
A \emph{$\G$-bisection} is a compact open subset $F\subset\G$ such that $\be:F\arr\be(F)$
and $\en:F\arr\en(F)$ are homeomorphisms.

A topological groupoid $\G$ is said to be \emph{\'etale} if it has a basis of
topology consisting of $\G$-bisections.
\end{defi}

Every bisection $F\subset\G$ defines the \emph{associated
  homeomorphism} $\be(F)\arr\en(F)$ given by the formula
$x\mapsto\en(Fx)$ for $x\in\be(F)$, or, equivalently, by
$\be(\gamma)\mapsto\en(\gamma)$ for $\gamma\in F$.

The groupoid of an action (see Example~\ref{examp:action}) and the
groupoid of germs of an action (see Example~\ref{examp:germs}) are \'etale.

\begin{defi}
An \'etale groupoid $\G$ is said to be a \emph{groupoid of germs} (or \emph{effective}) if
for every non-unit element $\gamma\in\G$ and every bisection $F$
containing $\gamma$, there exists $\delta\in F$ such that
$\be(\delta)\ne\en(\delta)$.
\end{defi}

If $\G$ is a groupoid of germs, then for every element $\gamma$ and
for every bisection $F\ni\gamma$, the element $\gamma$ is
uniquely determined by $\be(\gamma)$ and the homeomorphism associated
with $F$.

For every \'etale groupoid $\G$ the groupoid of germs of the local
homeomorphisms of $\G^{(0)}$ defined by bisections is a groupoid of
germs which is a quotient of $\G$. We call this quotient \emph{the
groupoid of germs associated with $\G$}.

There is a close connection between groupoids of germs and essentially
principal groupoids.

\begin{proposition}
\label{pr:essprincipalHaus}
If $\G$ is essentially principal and Hausdorff, then it is a groupoid
of germs.

Every second countable groupoid of germs is essentially principal.
\end{proposition}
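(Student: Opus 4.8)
The statement has two independent parts, and I would prove them separately.

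\emph{First part: essentially principal and Hausdorff $\Rightarrow$ groupoid of germs.} The plan is to argue by contradiction. Suppose $\G$ is not a groupoid of germs; then there is a non-unit $\gamma$ and a bisection $F \ni \gamma$ such that $\be(\delta) = \en(\delta)$ for all $\delta \in F$. This says precisely that the homeomorphism $h\colon \be(F) \arr \en(F)$ associated with $F$ is the identity on $\be(F) = \en(F)$. The key point is then that for every $x \in \be(F)$, the element $\delta \in F$ with $\be(\delta) = x$ lies in the isotropy group of $x$ (since $\en(\delta) = h(x) = x$), and $\delta$ is a non-unit for at least one $x$ — indeed for $x = \be(\gamma)$, because $\gamma$ itself is a non-unit and, $\G$ being étale hence $F$ being a bisection, $\gamma$ is the unique element of $F$ over $\be(\gamma)$. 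Here is where Hausdorffness enters: I need the set $U = \{x \in \be(F) : \text{the element of } F \text{ over } x \text{ is a non-unit}\}$ to be open, so that it is a nonempty open subset of $\G^{(0)}$ consisting entirely of points with nontrivial isotropy, contradicting essential principality. To see $U$ is open: $U = \be(F \setminus \G^{(0)})$, and in a Hausdorff étale groupoid $\G^{(0)}$ is clopen (it is open since $\G$ is étale, and closed since $\G$ is Hausdorff — the diagonal-type argument, $\G^{(0)} = \{\gamma : \gamma = \gamma^{-1}\gamma\}$ is closed). Hence $F \setminus \G^{(0)}$ is open in $F$, and since $\be$ restricted to the bisection $F$ is a homeomorphism onto its (open) image, $U$ is open. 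This completes the first part.

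\emph{Second part: second countable groupoid of germs $\Rightarrow$ essentially principal.} Here I want to show the set of units with nontrivial isotropy is meager, and then conclude it has empty interior, so its complement is dense. (In a Cantor set, which is a Baire space, meager sets have dense complement.) Fix a countable basis $\{F_n\}$ of bisections for $\G$. For each $n$, let $Z_n = \{x \in \be(F_n) : \text{the element } \delta_n(x) \in F_n \text{ over } x \text{ satisfies } \be(\delta_n(x)) = \en(\delta_n(x))\}$, i.e. the set where the associated homeomorphism $h_n$ of $F_n$ fixes the point. The groupoid-of-germs hypothesis, applied to bisections of the form $F_n \cap (\text{small bisection around a chosen non-unit})$, should give that each $Z_n$ has empty interior whenever $F_n$ contains a non-unit: indeed if $Z_n$ had nonempty interior $V$, then $F_n|_V := \{\delta \in F_n : \be(\delta) \in V\}$ would be a bisection all of whose elements are units or... wait — I must be careful: the effective condition says every bisection containing a non-unit also contains a point where $\be \ne \en$. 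The contrapositive I actually want: if a bisection $F$ satisfies $\be(\delta) = \en(\delta)$ for all $\delta \in F$, then $F$ contains no non-units, i.e. $F \subset \G^{(0)}$. So: if $Z_n$ has nonempty interior $V$, the bisection $F_n|_V$ has all its elements with equal source and range, hence $F_n|_V \subset \G^{(0)}$, meaning $\delta_n(x) = x$ for all $x \in V$. Now observe: a unit $x$ has nontrivial isotropy iff there is a non-unit $\gamma$ with $\be(\gamma) = \en(\gamma) = x$, and any such $\gamma$ lies in some basic bisection $F_n$; then $x \in \be(F_n)$ and $h_n(x) = x$ but $\delta_n(x) = \gamma \ne x$, so $x \in Z_n \setminus (\text{the unit-part of } F_n)$. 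Thus the set of units with nontrivial isotropy is contained in $\bigcup_n \big(Z_n \setminus \{x : \delta_n(x) = x\}\big)$, and I claim each such set is nowhere dense: it is contained in the closed set $\overline{Z_n}$, and its intersection with the interior of $Z_n$ is empty by the computation above (on $\operatorname{int} Z_n$ we have $\delta_n(x) = x$). Hence each piece is nowhere dense and the union is meager; by Baire its complement is dense.

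\emph{Main obstacle.} The delicate point throughout is the non-Hausdorff subtlety: $\G^{(0)}$ need not be closed, so in the second part I cannot freely speak of "$Z_n \setminus \G^{(0)}$" as a nice set, and I must phrase everything in terms of the section maps $\delta_n$ of individual bisections (where $\be$ \emph{is} a homeomorphism onto an open set) rather than in terms of global topological operations on $\G$. Getting the measurability/topological bookkeeping right for $\{x : \delta_n(x) = x\}$ — it is open in $\be(F_n)$ since it equals $\be(F_n \cap \G^{(0)})$ and $\G^{(0)}$ is open — and correctly identifying where effectiveness is used, is the part that needs care; the Baire-category skeleton itself is routine once these open/closed facts are pinned down. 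Second countability is used exactly to get a countable basis so that the meager union makes sense.
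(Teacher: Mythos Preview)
Your proposal is correct and follows essentially the same strategy as the paper for both halves: a contradiction argument for the first implication and a Baire category argument over a countable basis of bisections for the second. The only cosmetic difference is in how the contradiction is cashed out in the first part: you use Hausdorffness to conclude that $\G^{(0)}$ is closed, whence $\be(F\setminus\G^{(0)})$ is a nonempty open set of units with nontrivial isotropy, contradicting essential principality; the paper instead uses essential principality to show that every sub-bisection of $F$ meets $\G^{(0)}$, so that $\gamma$ and $\be(\gamma)$ cannot be separated, contradicting Hausdorffness. These are two faces of the same observation and neither is more elementary than the other. Your treatment of the second part is in fact more careful than the paper's sketch about the non-Hausdorff bookkeeping (tracking $\{x:\delta_n(x)=x\}=F_n\cap\G^{(0)}$ as an open set and locating the bad set inside $\partial Z_n$), which is exactly the point where care is needed.
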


\begin{proof}
Assume that $\G$ is essentially principal and Hausdorff. Suppose that
$\G$ is not a groupoid of germs. Then there exists
$\gamma\in\G\setminus\G^{(0)}$ and a bisection $F\ni\gamma$ such that
for every $\delta\in F$ we have $\be(\delta)=\en(\delta)$. Since $\G$
is essentially principal, for every bisection
$F'\subset F$, the set $F'$ contains a unit. It follows that
the unit $\be(\gamma)$ and the element $\gamma$ do not have disjoint
neighborhoods, which contradicts Hausdorffness of $\G$.

Suppose that $\G$ is a second countable groupoid of germs, and let
$F\subset\G$ be a bisection. Consider the set
$A=\{\gamma\;:\;\be(\gamma)=\en(\gamma)\}$. The set $A$ is closed. According to the
definition of a groupoid of germs, if $\gamma$ belongs to the interior of
$A$, then $\gamma$ is a unit. It follows that the set of non-unit
elements of $F$ is nowhere dense. Consequently, by Baire Category
Theorem, the set of units with trivial isotropy group is dense in
$\G^{(0)}$, if $\G$ is second countable.
\end{proof}

\begin{defi}
Let $\G$ be an \'etale groupoid. Its \emph{full group}
(\emph{topological full group}), denoted $\full\G$,
is the set of bisections $F\subset\G$ such that $\be(F)=\en(F)=\G^{(0)}$.
\end{defi}

The topological full group is often denoted in the literature by $[[\G]]$, but we changed the
notation for consistency with our notations for various normal subgroups
of the full group.

Our definition is slightly different from the definition due to
H.~Matui~\cite{matui:etale,matui:fullonesided}, where
the topological full group is defined as the topological full
group of the groupoid of germs associated with $\G$, so that it is a
homeomorphism group of $\G^{(0)}$. But if $\G$ is a
groupoid of germs, then our definition agrees with the H.~Matui's
definition. Note that it is often assumed in some papers that $\G$ is
essentially principal and Hausdorff, which implies that $\G$ is a
groupoid of germs, see Proposition~\ref{pr:essprincipalHaus}.

\subsection{Homology of \'etale groupoids}

Following H.~Matui, let us define, for an \'etale groupoid $\G$ and an
abelian group $A$ the homology groups $H_k(\G, A)$.

Denote by $\G^{(n)}$ the space of
sequences $(\gamma_1, \gamma_2, \ldots, \gamma_n)\in\G^n$ such that
the product $\gamma_1\gamma_2\ldots\gamma_n$ is defined. In
particular, $\G^{(1)}=\G$. We keep the
notation $\G^{(0)}$ for the space of units.

For every compact open subset $F\subset\G^{(n)}$ and $a\in A$, denote
by $a_F$ the function $\G^{(n)}\arr A$ equal to $a$ on $F$ and to $0$
on $\G^{(n)}\setminus F$. Denote by $C_c(\G^{(n)}, A)$ the
sub-group of the abelian group $A^{\G^{(n)}}$ generated by
the functions of the form $a_F$. If $\G$ is Hausdorff, then
$C_c(\G^{(n)}, A)$ coincides with the group of continuous $A$-valued compactly
supported functions on $\G^{(n)}$, where $A$ has discrete topology.

Consider the maps $d_i:\G^{(n)}\arr\G^{(n-1)}$, for $i=0, 1, \ldots, n$,  given by
\[d_i(\gamma_1, \gamma_2, \ldots, \gamma_n)=\left\{\begin{array}{ll}
    (\gamma_2, \gamma_3, \ldots, \gamma_n) & \text{for $i=0$}\\
(\gamma_1, \ldots, \gamma_i\gamma_{i+1}, \ldots, \gamma_n) & \text{for
  $1\le i\le n-1$}\\ (\gamma_1, \gamma_2, \ldots, \gamma_{n-1}) &
\text{for $i=n$.}\end{array}\right.\]
If $n=1$, then we set $d_0(\gamma)=\be(\gamma)$ and $d_1(\gamma)=\en(\gamma)$.

Define $d_{i*}:C_c(\G^{(n)}, A)\arr C_c(\G^{(n-1)}, A)$ by
\[d_{i*}(f)(x)=\sum_{y\in d_i^{-1}(x)}f(y),\]
and
\[\delta_n=\sum_{i=0}^n(-1)^id_{i*}.\]

Then
\[0\stackrel{\delta_0}{\longleftarrow} C_c(\G^{(0)}, A)
\stackrel{\delta_1}{\longleftarrow} C_c(\G^{(1)}, A)
\stackrel{\delta_2}{\longleftarrow} C_c(\G^{(2)}, A)\cdots\]
is a chain complex. We denote by $H_n(\G,
A)=\mathrm{Ker}\delta_n/\mathrm{Im}\delta_{n+1}$ its homology.

\begin{defi}
The \emph{index map} is the homomorphism $I:\full\G\arr H_1(\G, \Z)$
mapping a bisection $F\in\full\G$ to the class $[1_F]$ of its indicator.
\end{defi}

It is easy to check that the index map is a well defined homomorphism,
see~\cite{matui:fullonesided,matui:etale}.
We denote by $\mathsf{K}(\G)$ the kernel of the index map, and by
$\mathsf{D}(\G)$ the derived subgroup $[\full\G, \full\G]$ of the full
group. We obviously have $\mathsf{D}(\G)\le\mathsf{K}(\G)$.

\section{Groups $\symm(\G)$ and  $\alt(\G)$}

\subsection{Multisections}

\begin{defi}
A \emph{multisection of degree $d$} is a collection of $d^2$
bisections $\{F_{i, j}\}_{i, j=1}^d$ such that
\begin{enumerate}
\item $F_{i_2, i_3}F_{i_1, i_2}=F_{i_1, i_3}$ for all $1\le i_1, i_2,
  i_3\le d$;
\item the bisections $F_{i, i}$ are disjoint subsets of $\G^{(0)}$.
\end{enumerate}
We call $\bigcup_{i=1}^dF_{i, i}$ the \emph{domain} of the
multisection, and the sets $F_{i, i}$ the \emph{components of the domain}.
\end{defi}

See Figure~\ref{fig:multisection} for a schematic illustration of a
degree 3 multisection. In semigroup-theoretic terms we can define multisections as homomorphism from a finite symmetric inverse monoid to the inverse monoid of bisections of $\G$.

\begin{figure}
\centering
\includegraphics{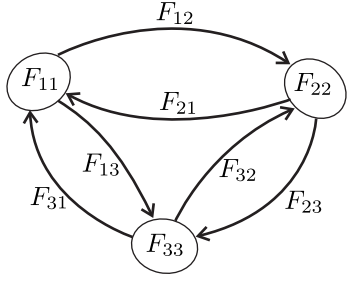}
\caption{A multisection}
\label{fig:multisection}
\end{figure}

\begin{lemma}
\label{lem:smallmsections}
Let $X=\{x_1, x_2, \ldots, x_d\}\subset\G^{(0)}$ be a set contained in
one $\G$-orbit, and let $U\subset\G^{(0)}$ be a neighborhood of
$X$. Then there exists a multisection $\F=\{F_{i, j}\}_{i, j=1}^d$
such that $x_i\in F_{i, i}$, $x_j=\en(F_{i, j}x_i)$, and the domain of $\F$ is
contained in $U$.
\end{lemma}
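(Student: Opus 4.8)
The goal is to build, for a finite set $X = \{x_1,\dots,x_d\}$ inside one $\G$-orbit, a degree-$d$ multisection whose components of the domain are small neighborhoods of the $x_i$ and whose total domain lies in a prescribed neighborhood $U$. The natural strategy is: first choose groupoid elements $\gamma_{i,j}$ realizing the orbit relations among the $x_i$ in a coherent way, then thicken each $\gamma_{i,j}$ to a bisection using the étale hypothesis, and finally shrink all the bisections simultaneously so that the multisection axioms (the composition law $F_{i_2,i_3}F_{i_1,i_2}=F_{i_1,i_3}$ and disjointness of the $F_{i,i}$) hold on the nose.

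First I would fix, for each $i$, an element $\gamma_i \in \G$ with $\be(\gamma_i)=x_1$ and $\en(\gamma_i)=x_i$ (possible since all $x_i$ lie in one orbit; take $\gamma_1 = x_1$). Then set $\gamma_{i,j} := \gamma_j\gamma_i^{-1}$, so that $\be(\gamma_{i,j})=x_i$, $\en(\gamma_{i,j})=x_j$, and $\gamma_{i_2,i_3}\gamma_{i_1,i_2}=\gamma_{i_1,i_3}$ identically, with $\gamma_{i,i}=x_i$. Since $\G$ is étale, each $\gamma_{i,j}$ lies in some bisection; pick one, call it $G_{i,j}$, and for the diagonal take $G_{i,i}$ to be a compact open neighborhood of $x_i$ inside $\G^{(0)}$. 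Now the products $G_{i_2,i_3}G_{i_1,i_2}$ are again bisections containing $\gamma_{i_1,i_3}$, but they need not equal $G_{i_1,i_3}$, and the $G_{i,i}$ need not be pairwise disjoint — so these are only ``approximate'' multisection data.

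The main work — and the step I expect to be the main obstacle — is the simultaneous shrinking. The cleanest way is to build the multisection out of a single bisection: choose a small compact open bisection $E$ with $x_1 \in \be(E) \subseteq \G^{(0)}$, and define $F_{i,j}$ as (a compact open neighborhood of $\gamma_{i,j}$ inside) $G_{j,1}\,E\,E^{-1}\,G_{i,1}^{-1}$, i.e. first transport a neighborhood of $x_1$ out to a neighborhood of $x_i$ via $G_{i,1}$, then back via $G_{j,1}$. Concretely, let $V \subseteq \be(E)$ be a compact open neighborhood of $x_1$, put $E_V := \{\eta \in E : \be(\eta) \in V\}$, and set $F_{i,j} := G_{j,1}\,E_V\,E_V^{-1}\,G_{i,1}^{-1}$ where we first intersect $V$ with $\en(\cdot)$-preimages so that $G_{j,1}$ and $G_{i,1}$ act on exactly $\en(E_V)$; the point is that all $F_{i,j}$ are ``the same transported set'' so the composition law $F_{i_2,i_3}F_{i_1,i_2}=F_{i_1,i_3}$ holds automatically because the middle factors $G_{i_2,1}^{-1}G_{i_2,1}$ collapse to units on the relevant domain. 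The disjointness of the $F_{i,i}$, and the containment $\bigcup_i F_{i,i} \subseteq U$, are then arranged by taking $V$ small enough: since $x_1,\dots,x_d$ are distinct points and the maps $x_1 \mapsto x_i$ (the homeomorphisms associated with $G_{i,1}$) are continuous, there is a compact open $V \ni x_1$ so small that its images under these $d$ homeomorphisms are pairwise disjoint and all contained in $U$; then $F_{i,i}$ is exactly the $i$-th such image. The verification that $\be(F_{i,j})=F_{j,j}$ wait — $\be(F_{i,j}) = F_{i,i}$ and $\en(F_{i,j}) = F_{j,j}$, that $x_i \in F_{i,i}$, and that $x_j = \en(F_{i,j}x_i)$, are then routine bookkeeping from the construction.

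A cleaner phrasing, which avoids carrying $E$ separately, is simply: shrink the diagonal blocks first. Choose a compact open $W_1 \ni x_1$ in $\G^{(0)}$ such that, writing $h_i$ for the homeomorphism $\be(G_{i,1}) \to \en(G_{i,1})$ associated with $G_{i,1}$, the sets $W_i := h_i(W_1 \cap \bigcap_k h_k^{-1}(\text{dom}))$ are pairwise disjoint and contained in $U$; then define $F_{i,j} := \{\,h_j(x)\!\cdot\!h_i(x)^{-1}$-type germ $\,\}$, more precisely $F_{i,j} := G_{j,1}|_{W_1}\,\bigl(G_{i,1}|_{W_1}\bigr)^{-1}$ interpreted as composition of bisections, restricted so each factor has source/range inside $W_1$ resp.\ $W_i$. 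One checks directly that $F_{i,i} = W_i$, that $(1)$ holds because $\bigl(G_{i_2,1}|_{W_1}\bigr)^{-1}G_{i_2,1}|_{W_1}$ is the identity bisection on $W_1$, and $(2)$ holds by the choice of $W_1$; the required properties $x_i \in F_{i,i}$ and $x_j = \en(F_{i,j}x_i)$ follow since $F_{i,j}$ carries $x_i$ to $x_j$ by construction. The only genuinely delicate point is making sure all the restrictions and intersections are taken in a consistent order so that every bisection product written down is actually defined (domains match ranges) — this is the bookkeeping I would be most careful about, but it is purely mechanical given that $\G$ is étale and $\G^{(0)}$ is a (hence zero-dimensional, compact-open-rich) Cantor set.
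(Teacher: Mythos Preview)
Your proposal is correct and takes essentially the same approach as the paper: choose elements $\gamma_i$ from $x_1$ to each $x_i$, thicken them to bisections $H_i$ (your $G_{i,1}$), shrink to a small clopen neighborhood $W$ of $x_1$ so that the images $\en(H_iW)$ are pairwise disjoint subsets of $U$, and set $F_{i,j}=(H_jW)(H_iW)^{-1}$. The paper's write-up is simply the streamlined version of your ``cleaner phrasing'' paragraph, without the intermediate detour through the auxiliary bisection $E$.
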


\begin{proof}
Find for every $i=2, \ldots, d$ an element $\gamma_i\in\G$ such that
$\be(\gamma_i)=x_1$ and $\en(\gamma_i)=x_i$. Set $\gamma_1=x_1$. Find
$\G$-bisections $H_i$ such that $\gamma_i\in H_i$. We can find a clopen
neighborhood $W$ of $x_1$ such that $W\subset\be(H_i)\cap U$ for every
$i$, and the sets $\en(H_iW)$ are disjoint subsets of $U$. We also assume
that $H_1W=W$. Define then $F_{i, j}=(H_jW)(H_iW)^{-1}$.
\end{proof}

Let $\F=\{F_{i, j}\}_{i, j=1}^d$ be a multisection with the domain $U$, and let $\pi\in
\symm_d$ be a permutation, where $\symm_d$ denotes the symmetric group of degree $d$. Then $\bigcup_{i=1}^dF_{i,
  \pi(i)}\cup(\G^{(0)}\setminus U)$ is an element of $\full\G$, which we
will denote $\F_\pi$. It is easy to see that $\pi\mapsto\F_\pi$ is an
embedding of $\symm_d$ into $\full\G$. Let us denote by $\symm(\F)$ the
image of this embedding, and denote by $\alt(\F)$ the image of the
alternating subgroup $\alt_d<\symm_d$.

\begin{proposition}
\label{pr:covering}
Let $\F=\{F_{i, j}\}_{i, j=1}^d$ be a multisection of degree $d\ge
5$. Let $\F^{(k)}=\{F_{i, j}^{(k)}\}_{i, j=1}^d$, for $k=1, 2, \ldots,
n$, be multisections such that $\bigcup_{k=1}^nF_{i,
  j}^{(k)}=F_{i, j}$ for all $1\le i, j\le d$.
Then $\alt(\F)$ is contained in the group generated by $\bigcup_{k=1}^n\alt(\F^{(k)})$.
\end{proposition}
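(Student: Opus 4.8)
The plan is to reduce the statement to a group-theoretic fact about a product of copies of $\alt_d$, the point being to replace the possibly non-disjoint covering $\bigcup_{k=1}^n F_{i,j}^{(k)}=F_{i,j}$ by a disjoint refinement. First I would record the following compatibility remark: since $F_{i,j}^{(k)}\subseteq F_{i,j}$ is a sub-bisection with $\be(F_{i,j}^{(k)})=F_{i,i}^{(k)}$ and $\en(F_{i,j}^{(k)})=F_{j,j}^{(k)}$, and $\be$ is injective on $F_{i,j}$, it is exactly the restriction of $F_{i,j}$ to the source set $F_{i,i}^{(k)}$; hence the homeomorphism associated with $F_{i,j}$ maps $F_{i,i}^{(k)}$ onto $F_{j,j}^{(k)}$, and therefore maps $F_{i,i}\setminus F_{i,i}^{(k)}$ onto $F_{j,j}\setminus F_{j,j}^{(k)}$. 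For each $S\subseteq\{1,\dots,n\}$ set
\[
Q_S^{(i)}=\bigcap_{k\in S}F_{i,i}^{(k)}\ \cap\bigcap_{k\notin S}\bigl(F_{i,i}\setminus F_{i,i}^{(k)}\bigr),
\]
a clopen subset of $F_{i,i}$. Then $F_{i,i}=\bigsqcup_{S\ne\emptyset}Q_S^{(i)}$ (since $\bigcup_kF_{i,i}^{(k)}=F_{i,i}$), $F_{i,i}^{(k)}=\bigsqcup_{S\ni k}Q_S^{(i)}$, and by the compatibility remark the homeomorphism associated with $F_{i,j}$ carries $Q_S^{(i)}$ onto $Q_S^{(j)}$. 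A direct check then shows that for each nonempty $S$ the restrictions $\F_S:=\bigl\{F_{i,j}|_{Q_S^{(i)}}\bigr\}_{i,j=1}^{d}$ form a multisection with components $Q_S^{(i)}$.

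Next I would set up the algebra. The multisections $\F_S$, $\emptyset\ne S\subseteq\{1,\dots,n\}$, have pairwise disjoint domains, so for distinct $S,S'$ the elements $(\F_S)_\sigma$, $(\F_{S'})_{\sigma'}$ of $\full\G$ commute; and from $F_{i,j}=\bigsqcup_{S\ne\emptyset}F_{i,j}|_{Q_S^{(i)}}$ and $F_{i,j}^{(k)}=\bigsqcup_{S\ni k}F_{i,j}|_{Q_S^{(i)}}$ one gets, for every $\sigma\in\symm_d$,
\[
\F_\sigma=\prod_{S\ne\emptyset}(\F_S)_\sigma,\qquad\F^{(k)}_\sigma=\prod_{S\ni k}(\F_S)_\sigma .
\]
Hence the maps $\sigma\mapsto(\F_S)_\sigma$ assemble into a homomorphism $\Phi$ from $P:=\prod_{S\ne\emptyset}\alt_d$ onto the subgroup of $\full\G$ generated by all $\alt(\F_S)$, under which $\alt(\F)$ is the image of the full diagonal $\Delta=\{(\sigma)_{S}:\sigma\in\alt_d\}$ and $\alt(\F^{(k)})$ is the image of the partial diagonal $\Delta_k$ consisting of those $(\sigma_S)_S$ with $\sigma_S=\sigma$ for $k\in S$ and $\sigma_S=1$ for $k\notin S$. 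Thus it suffices to prove the combinatorial claim $\Delta\le H:=\langle\Delta_1,\dots,\Delta_n\rangle$ inside $P$.

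For the combinatorial claim I would proceed in two steps. For nonempty $S_0\subseteq\{1,\dots,n\}$, let $\Delta_{S_0}\le P$ be the diagonal over the coordinates $\{S:S_0\subseteq S\}$. Step one: $\Delta_{S_0}\le H$, by induction on $|S_0|$. Singletons give the generators $\Delta_k$; and if $S_0=S_0'\cup\{k\}$ with $S_0'$ nonempty, a direct commutator computation shows $[\Delta_{S_0'},\Delta_k]=\Delta_{S_0}$, because the commutator of the element of $\Delta_{S_0'}$ labelled by $\sigma$ with the element of $\Delta_k$ labelled by $\tau$ is the element of $\Delta_{S_0}$ labelled by $[\sigma,\tau]$, and for $d\ge5$ the group $\alt_d$ is perfect, so these commutators generate $\alt_d$. (This is the one place the hypothesis $d\ge5$ is needed; the claim already fails for $d=3$, $n=2$.) Step two: fix a $3$-cycle $\tau\in\alt_d$ and, for nonempty $S_0$, let $d_{S_0}(\rho)\in\Delta_{S_0}$ be the element equal to $\rho$ on the coordinates $S\supseteq S_0$ and to $1$ elsewhere. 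The elements $d_{S_0}\bigl(\tau^{(-1)^{|S_0|-1}}\bigr)$ pairwise commute (on each coordinate they are powers of the single $3$-cycle $\tau$), and the identity $\sum_{\emptyset\ne S_0\subseteq S}(-1)^{|S_0|-1}=1$, valid for every nonempty $S$, gives
\[
\prod_{\emptyset\ne S_0\subseteq\{1,\dots,n\}}d_{S_0}\bigl(\tau^{(-1)^{|S_0|-1}}\bigr)=(\tau)_{S}\in H .
\]
Since $3$-cycles generate $\alt_d$ and $\sigma\mapsto(\sigma)_S$ is a homomorphism, this yields $\Delta\le H$, and applying $\Phi$ gives $\alt(\F)\le\langle\alt(\F^{(1)}),\dots,\alt(\F^{(n)})\rangle$.

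The step I expect to be the main obstacle is the combinatorial claim $\Delta\le H$, specifically the commutator identity $[\Delta_{S_0'},\Delta_k]=\Delta_{S_0}$, which is exactly where perfectness of $\alt_d$ — hence $d\ge5$ — is indispensable. The passage from the given covering to the disjoint refinement $\{\F_S\}$ is routine, but must be done carefully: in particular one must check that each $\F_S$ really is a multisection and that the two product formulas above hold verbatim.
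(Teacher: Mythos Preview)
Your argument is correct. The compatibility remark, the disjoint refinement $\{\F_S\}_{\emptyset\ne S\subseteq\{1,\dots,n\}}$, the product formulas for $\F_\sigma$ and $\F^{(k)}_\sigma$, and the reduction to the purely group-theoretic claim $\Delta\le H$ inside $P=\prod_{S\ne\emptyset}\alt_d$ are all sound; so are the two steps establishing that claim, including the inclusion--exclusion identity in Step~2 (the exponents add to $1$ on every coordinate, and the factors commute because they are powers of a single $3$-cycle).

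Your route differs from the paper's in one structural respect. The paper first observes that it suffices to treat $n=2$, and then carries out your refinement in that case: the three pieces $\mathcal{D}^{(1)},\mathcal{P},\mathcal{D}^{(2)}$ are exactly your $\F_{\{1\}},\F_{\{1,2\}},\F_{\{2\}}$, and the key Lemma (``$\{(\pi,\pi,1)\}\cup\{(1,\pi,\pi)\}$ generates $\alt_d^3$ for $d\ge 5$'') is precisely your combinatorial claim specialized to $n=2$, proved by the same commutator trick using perfectness of $\alt_d$. So the paper trades your $(2^n-1)$-coordinate bookkeeping and the M\"obius/inclusion--exclusion Step~2 for an induction on $n$ that keeps the algebra inside $\alt_d\times\alt_d\times\alt_d$; you trade that induction for a direct one-shot argument at the cost of the extra Step~2. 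The use of $d\ge 5$ (perfectness) is identical in both. One small simplification available to you: once Step~1 gives all $\Delta_{S_0}$, a second round of commutators isolates each single coordinate factor (take $[\Delta_S,\Delta_{S'}]$ with $S\cup S'=\{1,\dots,n\}$ to get the top coordinate, then peel down), which would yield the stronger conclusion $H=P$ that the paper's lemma also gives---but your inclusion--exclusion is a perfectly good alternative for the weaker statement $\Delta\le H$ that is all you need.
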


\begin{proof}
Let us prove at first the following lemma.

\begin{lemma}
\label{lem:pipi1}
Let $d\ge 5$. Then the set $A=\{(\pi, \pi,
1)\;:\;\pi\in\alt_d\}\cup\{(1, \pi, \pi)\;:\;\pi\in\alt_d\}$ generates
$\alt_d\times\alt_d\times\alt_d$.
\end{lemma}

\begin{proof}
We have $[(\pi_1, \pi_1, 1), (1, \pi_2, \pi_2)]=(1, [\pi_1, \pi_2],
1)$. Since $\alt_d$ is equal to its commutator subgroup, every element
$(1, \pi, 1)$, $\pi\in\alt_d$, belongs to $\langle A\rangle$. It
follows that every element of the form $(\pi, 1, 1)$ or $(1, 1, \pi)$
belongs to $\langle A\rangle$.
\end{proof}

It is enough to prove Proposition~\ref{pr:covering} for $n=2$.

Denote $P_{i, j}=F_{i, j}^{(1)}\cap F_{i, j}^{(2)}$, $D_{i,
  j}^{(1)}=F_{i, j}^{(1)}\setminus F_{i, j}^{(2)}$, $D_{i,
  j}^{(2)}=F_{i, j}^{(2)}\setminus F_{i, j}^{(1)}$. Then
$\mathcal{P}=\{P_{i, j}\}_{i, j=1}^d$, $\mathcal{D}^{(1)}=\{D_{i,
  j}^{(1)}\}_{i, j=1}^d$, and $\mathcal{D}^{(2)}=\{D_{i,
  j}^{(2)}\}_{i, j=1}^d$  are multisections. It is obvious that
$\alt(\F)<\langle\alt(\mathcal{P})\cup\alt(\mathcal{D}^{(1)})\cup\alt(\mathcal{D}^{(2)})\rangle$,
hence it is enough to prove that $\alt(\mathcal{P})$,
$\alt(\mathcal{D}^{(1)})$, and $\alt(\mathcal{D}^{(2)})$ are subgroups
of $\langle\alt(\F^{(1)})\cup\alt(\F^{(2)})\rangle$. The elements of
$\alt(\F^{(i)})$ leave the domains of $\mathcal{P}$ and
$\mathcal{D}^{(i)}$ invariant. For every $\pi\in\alt_d$, restrictions
of $\F_\pi^{(1)}$ to the domains of $\mathcal{P}$ and
$\mathcal{D}^{(1)}$ are equal to $\mathcal{P}_\pi$ and
$\mathcal{D}^{(1)}_\pi$, respectively, while its restriction to the domain of
$\mathcal{D}^{(2)}$ is trivial. Similarly, restrictions of
$\F_\pi^{(2)}$ to the domains of $\mathcal{P}$ and $\mathcal{D}^{(2)}$
are equal to $\mathcal{P}_\pi$ and $\mathcal{D}^{(2)}_\pi$, respectively, while its
restriction to the domain of $\mathcal{D}^{(1)}$ is trivial. It
follows from Lemma~\ref{lem:pipi1} that $\alt(\mathcal{P})$ and
$\alt(\mathcal{D}^{(i)})$, $i= 1, 2$, belong to $\langle\alt(\F^{(1)})\cup\alt(\F^{(2)})\rangle$.
\end{proof}

Let $\F=\{F_{i, j}\}_{i, j=1}^d$ be a multisection, and let $U\subset
F_{i, i}$ be a clopen set, for some $i\in\{1, 2, \ldots, d\}$.
Denote, for every $j=1, 2, \ldots, d$, by
$U_j$ the set $\en(F_{i, j}U)\subset F_{j, j}$, and by $F_{i, j}'$ the
restriction $F_{i, j}U_i$. Then $\F'=\{F_{i, j}'\}_{i, j=1}^d$ is a
multisection, which we call \emph{restriction} of $\F$ to $U$.

\begin{proposition}
\label{pr:unionintersection}
Let $\mathcal{G}=\{G_{i, j}\}_{i, j=1}^{d_1}$ and $\mathcal{H}=\{H_{i, j}\}_{i,
  j=1}^{d_2}$ be multisections such that $d_i\ge 3$. Suppose that the
intersection $U$ of the domains of $\mathcal{G}$ and $\mathcal{H}$ is
equal to the intersection $G_{1, 1}\cap H_{1, 1}$. Denote by $\mathcal{G}'$
and $\mathcal{H}'$ the restrictions to $U$ of $\mathcal{G}$ and $\mathcal{H}$,
respectively. Let $\F$ be the multisection with the domain equal to
the union of the domains of $\mathcal{G}'$ and $\mathcal{H}'$ and
consisting of all possible compositions
of the elements of $\mathcal{G}'\cup\mathcal{H}'$. We have then
$\alt(\F)\le\left\langle\alt(\mathcal{G})\cup\alt(\mathcal{H})\right\rangle$.
\end{proposition}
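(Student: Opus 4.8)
The plan is to reduce to the case $d_1=d_2=3$ and then exploit that $\alt_5$ is perfect while $\alt_3$ is abelian. I may assume $U\ne\emptyset$, for otherwise $\mathcal G'$, $\mathcal H'$ and $\F$ all have empty domain and there is nothing to prove. The multisection $\F$ has degree $d=d_1+d_2-1$: its domain has $U$ as one component, and the hypothesis $U=G_{1,1}\cap H_{1,1}$ forces the other components (those of $\mathcal G'$ and of $\mathcal H'$ different from $U$) to be pairwise disjoint.

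\emph{Reduction to $d_1=d_2=3$.} From $(a\,b\,c)=(1\,a\,b)(1\,b\,c)$ one gets $\alt_n=\langle\,\alt(S):1\in S,\ |S|=3\,\rangle$, and since $\pi\mapsto\mathcal G_\pi$ is a homomorphism this yields $\alt(\mathcal G)=\langle\,\alt(\mathcal G_{ij})\,\rangle$, where $\mathcal G_{ij}$ is the degree-three sub-multisection of $\mathcal G$ carried by the components $G_{1,1},G_{i,i},G_{j,j}$; similarly $\alt(\mathcal H)=\langle\,\alt(\mathcal H_{kl})\,\rangle$. Each pair $(\mathcal G_{ij},\mathcal H_{kl})$ satisfies the hypotheses with degrees $3,3$ and the same $U$, and the amalgamated multisection it produces is precisely the sub-multisection of $\F$ carried by the corresponding five components. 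Assuming the proposition for $d_1=d_2=3$, all these alternating groups lie in $\langle\alt(\mathcal G),\alt(\mathcal H)\rangle$; and since $d_1,d_2\ge 3$ guarantees $\F$ has at least two $G$-components and at least two $H$-components, every three-cycle of $\alt_d$ can be written as a product $(U\,x\,y)(U\,y\,z)$ of three-cycles each supported on a five-component set of the shape ``$U$, two $G$-components, two $H$-components'', so these subgroups generate $\alt(\F)$.

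\emph{The base case $d_1=d_2=3$.} I would write $\mathcal G=\mathcal G'\cup\mathcal G''$ componentwise, where $\mathcal G''$ is the restriction of $\mathcal G$ to $G_{1,1}\setminus U$; then $\mathcal G_\pi=\mathcal G'_\pi\mathcal G''_\pi$ with disjoint supports, and likewise $\mathcal H=\mathcal H'\cup\mathcal H''$. Now $\G^{(0)}$ splits into four clopen sets invariant under $L:=\langle\alt(\mathcal G),\alt(\mathcal H)\rangle$: the domain $D_0$ of $\F$ (equal to the union of the domains of $\mathcal G'$ and $\mathcal H'$), the domain $D_1$ of $\mathcal G''$, the domain $D_2$ of $\mathcal H''$, and the complement of the domains of $\mathcal G$ and $\mathcal H$, on which every element of $L$ is the trivial bisection. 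Restricting bisections to the groupoids over the first three pieces gives a homomorphism
\[
L\ \longrightarrow\ \full{\G|_{D_0}}\times\full{\G|_{D_1}}\times\full{\G|_{D_2}},\qquad g\longmapsto(p_0(g),p_1(g),p_2(g)),
\]
which is injective, because an element of $L$ decomposes as the union of its restrictions to the four pieces and the fourth restriction is always trivial; it sends $\mathcal G_\pi\mapsto(\mathcal G'_\pi,\mathcal G''_\pi,1)$ and $\mathcal H_\rho\mapsto(\mathcal H'_\rho,1,\mathcal H''_\rho)$. In $\full{\G|_{D_0}}$ the elements $\mathcal G'_\pi,\mathcal H'_\rho$ are built from the bisections of $\F$ itself (the crossing bisections of $\F$ are, by definition, the relevant compositions of bisections of $\mathcal G'$ and $\mathcal H'$) and realize the two alternating groups on the two triples of components of $D_0$ coming from $\mathcal G'$ and $\mathcal H'$; these triples meet only in $U$, so an elementary three-cycle computation shows they generate $\alt_5$. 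Hence $p_0(L)$ is the image of $\alt(\F)$ in $\full{\G|_{D_0}}$, isomorphic to $\alt_5$ (the components of $\F$ being nonempty as $U\ne\emptyset$) and in particular perfect, while $p_1(L),p_2(L)$, being quotients of $\alt_3\cong\Z/3\Z$, are abelian. Therefore $p_1,p_2$ vanish on $[L,L]$ whereas $p_0([L,L])=[\alt(\F),\alt(\F)]=\alt(\F)$; so for each $\sigma\in\alt_5$ there is $g\in[L,L]\le L$ with $p_0(g)=\F_\sigma|_{D_0}$ and $p_1(g)=p_2(g)=1$, and comparing restrictions on all four invariant pieces shows $g=\F_\sigma$. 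Thus $\alt(\F)\le L$, completing the base case.

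\emph{Main obstacle and routine points.} The essential difficulty is precisely in the base case: $\alt(\mathcal G')$ is not recoverable from $\alt(\mathcal G)$ by itself — elements of $\alt(\mathcal G)$ move the whole components $G_{i,i}$, not just the parts meeting $U$ — so $\mathcal G$ and $\mathcal H$ really must be used together, and what makes this work is that commutators kill the unwanted actions on $D_1$ and $D_2$ (these factor through the abelian group $\alt_3$) but not the action on $D_0$ (governed by the perfect group $\alt_5$). Along the way one must verify the routine facts: that $\mathcal G',\mathcal G''$ are multisections with $\mathcal G=\mathcal G'\cup\mathcal G''$ (and similarly for $\mathcal H$); that $D_0,D_1,D_2$ together with the fourth set are disjoint, cover $\G^{(0)}$, and are $L$-invariant, and that the displayed homomorphism is injective; that the amalgam of the sub-multisections appearing in the reduction step is the asserted sub-multisection of $\F$; and the elementary group-theoretic lemma that two alternating groups of degree at least $3$, embedded in $\symm(A\cup B)$ and meeting in a single point, generate $\alt(A\cup B)$.
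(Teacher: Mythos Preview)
Your argument is correct and rests on the same two ingredients as the paper's proof: the observation that commutators of elements of $\alt(\mathcal G)$ with elements of $\alt(\mathcal H)$ act only on the domain of $\F$, together with the elementary fact that two copies $\alt_X,\alt_Y$ with $|X\cap Y|=1$ generate $\alt_{X\cup Y}$.

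The packaging is somewhat different, however. The paper does not reduce to $d_1=d_2=3$; instead it proves directly that for any $d_1,d_2\ge 3$ the normal closure of $[\alt_X,\alt_Y]$ in $\langle\alt_X\cup\alt_Y\rangle$ is all of $\alt_{X\cup Y}$ (by exhibiting a single $3$-cycle as a commutator and then conjugating), and applies this in one stroke: since $[\mathcal G_{\pi_1},\mathcal H_{\pi_2}]\in\alt(\F)$ and $\alt(\F)$ is stable under conjugation by $\alt(\mathcal G)\cup\alt(\mathcal H)$, the normal closure equals $\alt(\F)$. Your route trades this normal-closure lemma for a reduction step plus the perfectness of $\alt_5$, made transparent by the explicit product decomposition $L\hookrightarrow\full{\G|_{D_0}}\times\full{\G|_{D_1}}\times\full{\G|_{D_2}}$. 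The paper's version is shorter and uniform in $d_1,d_2$; yours makes the ``commutators kill the abelian side-factors'' mechanism more visibly structural. Both are sound, and the reduction step you use (that the degree-$5$ sub-multisections $\F_{ijkl}$ generate $\alt(\F)$ via the identity $(a\,b\,c)=(U\,a\,b)(U\,b\,c)$) is indeed where $d_1,d_2\ge 3$ is needed.
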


\begin{proof}
Let us prove at first the following lemma.

\begin{lemma}
Let $X, Y$ be finite sets such that $|X|, |Y|\ge 3$, and $|X\cap
Y|=1$. Then the normal closure in the group $\langle
\alt_X\cup\alt_Y\rangle$ of the set $[\alt_X, \alt_Y]$ is the whole
alternating group $\alt_{X\cup Y}$.
\end{lemma}

\begin{proof}
Denote by $t$ the unique element of $X\cap Y$.
Let us show at first that $\alt_X\cup\alt_Y$ generates $\alt_{X\cup
  Y}$. It is known that $\alt_{X\cup Y}$ is generated by cycles of length
three. If a cycle $(x, y, z)$ is such that $\{x, y, z\}\subset X$ or
$\{x, y, z\}\subset Y$, then it already belongs to
$\alt_X\cup\alt_Y$. If the cycle contains $t$,
then it also belongs to $\langle\alt_X\cup\alt_Y\rangle$,
since a cycle $(y_1, x_1, t)$ for $x_1\in X$ and $y_1\in Y$
is equal to the commutator of the cycles $(x_1, x_2, t)\in\alt_X$ and
$(y_1, y_2, t)\in\alt_Y$.

Suppose that $\{x, y, z\}$ is such that $x, y\in X\setminus Y$ and $z\in
Y\setminus X$. Then $(x, y, z)$ is equal to the cycle obtained by
conjugating $(x, y, t)\in \alt_X$ by a cycle of the form $(t, z, t')\in\alt_Y$ such that
$\{t, z, t'\}\in Y$.

We have shown that $\langle\alt_X\cup\alt_Y\rangle=\alt_{X\cup Y}$.
As we have mentioned above, any cycle of the form $(y_1, x_1, t)$ for
$x_1\in X$ and $y_1\in Y$ belongs to $[\alt_X, \alt_Y]$. Conjugating it
by elements of $\alt_{X\cup Y}$, we obtain all cycles of length
three, thus generating $\alt_{X\cup Y}$ by conjugates of a single
element of $[\alt_X, \alt_Y]$.
\end{proof}

It is easy to see that for any $\pi_i\in\alt_{d_i}$ we have $[\mathcal{G}_{\pi_1},
\mathcal{H}_{\pi_2}]\in\alt(\F)$. If we
conjugate an element of $\alt(\F)$ by an element of the set
$\alt(\mathcal{G})\cup\alt(\mathcal{H})$, then we get
an element of $\alt(\F)$. It follows that the normal closure of the
set $[\alt(\mathcal{G}), \alt(\mathcal{H})]$ in the group
$\left\langle\alt(\mathcal{G})\cup\alt(\mathcal{H})\right\rangle$ is
contained in $\alt(\F)$. The above lemma finishes then the proof.
\end{proof}

\subsection{Definition of $\alt(\G)$ and $\symm(\G)$}

\begin{defi}
Denote by $\symm_d(\G)$ (resp.\ $\alt_d(\G)$) the subgroup of $\full\G$ generated by the union of the subgroups $\symm(\F)$ (resp.\ $\alt(\F)$) for all multisections $\F$ of degree $d$.

Denote $\symm(\G)=\symm_2(\G)$ and $\alt(\G)=\alt_3(\G)$.
\end{defi}

Note that for every multisection $\F=\{F_{i, j}\}_{i, j=1}^d$, and
every $g\in\full\G$, the set $\F^g=\{g^{-1}F_{i, j}g\}_{i, j=1}^d$ is
also a multisection, and $\symm(\F^g)=g^{-1}\symm(\F)g$,
$\alt(\F^g)=g^{-1}\alt(\F)g$.
It follows that all subgroups $\alt_d(\G)$ and $\symm_d(\G)$ are a normal in $\full\G$.

We obviously have $\symm_d(\G)\ge\symm_{d+1}(\G)$ for every $d\ge 2$, and $\alt_d(\G)\ge\alt_{d+1}(\G)$ for every $d\ge 3$. Namely, for every
multisection $\F$ of degree $d+1$ the group $\symm(\F)$ (resp.\ the group $\alt(\F)$) is generated by
the groups $\symm(\F')$ (resp.\ $\alt(\F')$) where $\F'$ runs through sub-multisections of $\F$ of degree $d$.

In particular, $\alt(G)\ge\alt_d(\G)$ for all $d\ge 3$, and $\symm(\G)\ge\symm_d(\G)$ for all $d\ge 2$.

Note that if there exists a  $\G$-orbit of size $d\ge 3$, then $\alt_d(\G)$ and $\symm_d(\G)$ act on it transitively, while $\alt_{d+1}(\G)$ and $\symm_{d+1}(\G)$ act on it trivially, hence $\alt_d(\G)\ne\alt_{d+1}(\G)$ and $\symm_d(\G)\ne\symm_{d+1}(\G)$. It follows that $\alt_{d+1}(\G)\ne\alt(\G)$ and $\symm_{d+1}(\G)\ne\symm(\G)$ in this case.

\begin{proposition}
\label{pr:anydegreegen}
Suppose that all orbits of $\G$ have at least $n$ points. Then $\alt_d(\G)=\alt(\G)$ for all $3\le d\le n$, and $\symm_d(\G)=\symm(\G)$ for all $2\le d\le n$.
\end{proposition}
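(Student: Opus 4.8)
The plan is to prove the two inclusions $\alt_{d+1}(\G)\supseteq\alt_d(\G)$ for $3\le d\le n-1$ and $\symm_{d+1}(\G)\supseteq\symm_d(\G)$ for $2\le d\le n-1$; together with the reverse inclusions already recorded before the proposition this gives $\alt_3(\G)=\alt_4(\G)=\cdots=\alt_n(\G)$ and $\symm_2(\G)=\cdots=\symm_n(\G)$, which, since $\alt(\G)=\alt_3(\G)$ and $\symm(\G)=\symm_2(\G)$, is the assertion. As $\alt_d(\G)$ is generated by the subgroups $\alt(\F)$ over multisections $\F$ of degree $d$ (and likewise for $\symm$), it suffices to fix one multisection $\F=\{F_{i,j}\}_{i,j=1}^d$ with $d+1\le n$ and show $\alt(\F)\le\alt_{d+1}(\G)$ (resp.\ $\symm(\F)\le\symm_{d+1}(\G)$ when $d\ge 2$).

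The heart of the argument is a local extension step: for every $x\in F_{1,1}$ there are a clopen neighborhood $W\subseteq F_{1,1}$ of $x$ and a degree-$(d+1)$ multisection $\mathcal{E}=\{E_{i,j}\}_{i,j=1}^{d+1}$ whose degree-$d$ subcollection $\{E_{i,j}\}_{i,j=1}^d$ is exactly the restriction of $\F$ to $W$ (in the sense defined above; write it $\F|_W$). To build it, set $x_i=\en(F_{1,i}x)$ for $i=1,\dots,d$; these are $d$ distinct points of one $\G$-orbit, and since that orbit has at least $n\ge d+1$ points we may choose a further point $x_{d+1}$ in it. Now run the construction in the proof of Lemma~\ref{lem:smallmsections} with $H_i:=F_{1,i}$ for $i\le d$ (so $H_1=F_{1,1}$) and $H_{d+1}$ any bisection whose associated homeomorphism carries $x$ to $x_{d+1}$: choose a clopen $W\subseteq F_{1,1}$, $x\in W$, small enough that the sets $\en(H_iW)$ are pairwise disjoint and $H_1W=W$, and put $E_{i,j}=(H_jW)(H_iW)^{-1}$. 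A direct computation using $F_{1,j}F_{i,1}=F_{i,j}$ and $W\subseteq F_{1,1}$ shows that for $i,j\le d$ the bisection $E_{i,j}$ is $F_{i,j}$ restricted to the source set $\en(F_{1,i}W)$, i.e.\ $\{E_{i,j}\}_{i,j\le d}=\F|_W$.

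Granting this, the rest is bookkeeping. The standard embedding $\alt_d\hookrightarrow\alt_{d+1}$ as the stabilizer of the last index is compatible with the embeddings into $\full\G$ attached to $\{E_{i,j}\}_{i,j\le d}$ and to $\mathcal{E}$, so $\alt(\F|_W)=\alt(\{E_{i,j}\}_{i,j\le d})\le\alt(\mathcal{E})\le\alt_{d+1}(\G)$, and similarly for $\symm$. Covering the compact set $F_{1,1}$ by finitely many such neighborhoods and disjointifying, write $F_{1,1}=\bigsqcup_{k=1}^m V_k$ with each $V_k$ clopen and contained in one of the neighborhoods $W$; restricting the corresponding $\mathcal{E}$ further to $V_k$ shows $\alt(\F|_{V_k})\le\alt_{d+1}(\G)$ (resp.\ $\symm$). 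Finally, for any $\pi\in\alt_d$ the domains of the multisections $\F|_{V_k}$ partition the domain of $\F$, the element $(\F|_{V_k})_\pi$ is supported on the domain of $\F|_{V_k}$ and there coincides with $\F_\pi$; hence $\F_\pi=\prod_{k=1}^m(\F|_{V_k})_\pi\in\alt_{d+1}(\G)$, and the same reasoning applies with $\symm$ in place of $\alt$.

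I expect the main obstacle to be the local extension step, and specifically the requirement that the degree-$d$ part of $\mathcal{E}$ be a genuine restriction of $\F$ rather than merely some sub-multisection with the same domain components — this is exactly what makes the reassembly $\F_\pi=\prod_k(\F|_{V_k})_\pi$ valid. Note that one cannot shortcut the reassembly via Proposition~\ref{pr:covering}, since that result assumes degree at least $5$ while here $d$ may be $2$ or $3$; but the pieces $(\F|_{V_k})_\pi$ have pairwise disjoint supports, so the reassembly is elementary.
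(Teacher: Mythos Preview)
Your proof is correct and follows essentially the same approach as the paper's: extend a degree-$d$ multisection locally to degree $d+1$ via the construction of Lemma~\ref{lem:smallmsections}, then cover $F_{1,1}$ by finitely many such neighborhoods, disjointify, and reassemble. You have simply spelled out in detail what the paper leaves implicit, including the verification that $\{E_{i,j}\}_{i,j\le d}$ is genuinely the restriction $\F|_W$ and the elementary disjoint-support reassembly that avoids invoking Proposition~\ref{pr:covering}.
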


\begin{proof} Let us prove the proposition for $\symm(\G)$, the proof
  for $\alt(\G)$ is analogous.
We know that $\symm_k(\G)\ge\symm_{k+1}(\G)$ for all $2\le k\le n-1$.
We have to prove that $\symm_k(\G)\le\symm_{k+1}(\G)$. Let $\F$ be
a multisection of degree $k$. It is shown the same way as in the proof of Lemma~\ref{lem:smallmsections} that
for any sufficiently small subset $U$ of a component of the domain of
$\F$ there exists a multisection $\F'$ of degree $k+1$ containing
restriction of $\F$ to $U$. It follows that we can split a component
$F_{1, 1}$ of the domain of $\F$ into a finite disjoint union of sets
$U_i$ such that the restriction of $\F$ to every $U_i$ is contained in a
multisection of degree $k+1$. It follows that $\symm(\F)\subset\symm_{k+1}(\G)$.
\end{proof}

\begin{corollary}
\label{cor:anydegreegen}
Suppose that every orbit of $\G$ is infinite. Then $\alt_d(\G)=\alt(\G)$ for all $d\ge 3$, and $\symm_d(\G)=\symm(\G)$ for all $d\ge 2$.
\end{corollary}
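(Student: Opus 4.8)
The plan is to derive this statement directly from Proposition~\ref{pr:anydegreegen}, of which it is simply the limiting case. The observation that makes everything work is that the hypothesis ``every orbit of $\G$ is infinite'' supplies the hypothesis of Proposition~\ref{pr:anydegreegen} uniformly: for \emph{every} integer $n\ge 1$, all orbits of $\G$ have at least $n$ points.

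First I would fix an arbitrary $d\ge 3$ and apply Proposition~\ref{pr:anydegreegen} with $n=d$. Since all orbits of $\G$ have at least $d$ points, the proposition gives $\alt_k(\G)=\alt(\G)$ for every $k$ with $3\le k\le d$; in particular $\alt_d(\G)=\alt(\G)$. As $d\ge 3$ was arbitrary, this proves $\alt_d(\G)=\alt(\G)$ for all $d\ge 3$. The argument for the symmetric analog is identical: fixing $d\ge 2$ and applying Proposition~\ref{pr:anydegreegen} with $n=d$ yields $\symm_k(\G)=\symm(\G)$ for all $k$ with $2\le k\le d$, hence $\symm_d(\G)=\symm(\G)$.

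I expect no genuine obstacle here. All the substantive work — splitting a component of the domain of a degree-$k$ multisection into clopen pieces, each of which extends to a degree-$(k+1)$ multisection, in the spirit of the proof of Lemma~\ref{lem:smallmsections} — has already been done inside the proof of Proposition~\ref{pr:anydegreegen}. The only thing to verify is the entirely routine point that ``infinite orbits'' is exactly ``at least $n$ points for all $n$'', so that the bounded-range conclusions of the proposition can be stitched together over all $n$ to give the unbounded conclusion stated here.
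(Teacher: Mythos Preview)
Your proposal is correct and matches the paper's approach: the paper states the corollary immediately after Proposition~\ref{pr:anydegreegen} with no proof, treating it as the obvious consequence of applying that proposition with $n=d$ for each $d$. Your observation that ``infinite orbits'' furnishes the hypothesis of the proposition for every $n$ is exactly the intended one-line deduction.
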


It is easy to see that $\symm(\G)\le\mathsf{K}(\G)$  and
$\alt(\G)\le\mathsf{D}(\G)$. We have a diagram of
subgroups of $\full\G$ shown on Figure~\ref{fig:poset}.

\begin{figure}
\centering
\includegraphics{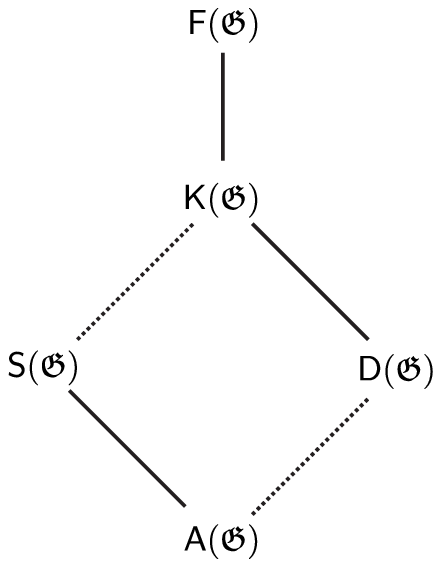}
\caption{Subgroups of $\full\G$}
\label{fig:poset}
\end{figure}

\subsection{Rigidity}

The next theorem follows directly from  Theorems~0.2 and~3.3 of~\cite{rubin:reconstr}.

Let $G$ be a group acting by homeomorphisms of a topological space $\X$. Denote, for $U\subset\X$, by $G_{(U)}$ the subgroup consisting of elements $g\in G$ acting trivially on $\X\setminus U$.
We say that $G$ is \emph{locally minimal} if there exists a basis of open sets $\mathcal{U}$ such that for every $U\in\mathcal{U}$ and $x\in U$ the $G_{(U)}$-orbit of $x$ is dense in $U$. 

\begin{theorem}
\label{th:rubin}
If $G_i$ are locally minimal groups of homeomorphisms of locally compact Hausdorff spaces $\X_i$, then for every isomorphism $\phi:G_1\arr G_2$ there exists a homeomorphism $F:\X_1\arr\X_2$ such that $\phi(g)=F\circ  g\circ F^{-1}$ for all $g\in G_1$.
\end{theorem}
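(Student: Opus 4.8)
The plan is to obtain the statement as a direct application of M.~Rubin's reconstruction theorem, so essentially all of the work lies in translating the hypothesis ``locally minimal'' into the form required by Theorems~0.2 and~3.3 of~\cite{rubin:reconstr}. Recall that Rubin's theorem asserts: if $G$ is a group of homeomorphisms of a locally compact Hausdorff space $\X$ that is \emph{locally dense} --- meaning that for every nonempty open $U\subseteq\X$ and every $x\in U$ the closure of the orbit $G_{(U)}\cdot x$ has nonempty interior --- then the pair $(\X,G)$ is reconstructible: for any second such pair $(\X',G')$ and any group isomorphism $\phi\colon G\arr G'$ there is a (unique) homeomorphism $F\colon\X\arr\X'$ with $\phi(g)=FgF^{-1}$ for all $g\in G$. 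The abstract version of this (Theorem~0.2) is phrased in terms of the algebra of regular open subsets and an axiom of ``local movement''; Theorem~3.3 is the topological specialization quoted above.

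First I would verify that a locally minimal group $G=G_i$ acting on $\X=\X_i$ is locally dense in Rubin's sense. Let $\mathcal{U}$ be a basis witnessing local minimality. Given any nonempty open $V\subseteq\X$ and $x\in V$, pick $U\in\mathcal{U}$ with $x\in U\subseteq V$. Every homeomorphism supported in $U$ is supported in $V$, so $G_{(U)}\le G_{(V)}$ and hence the orbit $G_{(V)}\cdot x$ contains $G_{(U)}\cdot x$, which by local minimality is dense in $U$. Therefore $\overline{G_{(V)}\cdot x}\supseteq U$, so it has nonempty interior (indeed it contains an open neighbourhood of $x$). Thus both $G_1$ and $G_2$ satisfy the local density hypothesis of Rubin's theorem.

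Then I would simply invoke Rubin's theorem with $G=G_1$ and $G'=G_2$: it produces a homeomorphism $F\colon\X_1\arr\X_2$ with $\phi(g)=F\circ g\circ F^{-1}$ for all $g\in G_1$, which is exactly the assertion.

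The one point that requires care --- and the only place where something could slip --- is the precise matching of hypotheses: Rubin states his theorems for a specific class of spaces and groups, and one must confirm that locally compact Hausdorff spaces together with locally minimal groups fall into that class (in particular the behaviour at any isolated points of $\X$, which do not occur in our intended applications, where $\X$ is a Cantor set). Modulo checking the exact formulations of Theorems~0.2 and~3.3, the deduction is immediate, since local minimality is visibly a strong form of Rubin's local density condition.
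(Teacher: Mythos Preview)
Your proposal is correct and matches the paper's own treatment: the paper does not give a proof but simply states that the theorem ``follows directly from Theorems~0.2 and~3.3 of~\cite{rubin:reconstr}'', and your verification that local minimality implies Rubin's local density condition is exactly the (trivial) translation step needed to justify that citation.
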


As a corollary, we get the following.

\begin{theorem}
Let $\G_i$, for $i=1, 2$, be a minimal groupoid of germs. Then the following conditions are equivalent.
\begin{enumerate}
\item The groupoids $\G_1$ and $\G_2$ are isomorphic as topological
  groupoids.
\item The groups $\full{\G_1}$ and $\full{\G_2}$ are isomorphic.
\item The groups $\mathsf{D}(\G_1)$ and $\mathsf{D}(\G_2)$ are
  isomorphic.
\item The groups $\mathsf{K}(\G_1)$ and $\mathsf{K}(\G_2)$ are
  isomorphic.
\item The groups $\symm(\G_1)$ and $\symm(\G_2)$ are isomorphic.
\item The groups $\alt(\G_1)$ and $\alt(\G_2)$ are isomorphic.
\end{enumerate}
\end{theorem}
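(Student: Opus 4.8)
The implication from $(1)$ to each of $(2)$--$(6)$ is routine functoriality. An isomorphism $\Phi$ of topological groupoids $\G_1\arr\G_2$ carries bisections to bisections and $\G_1^{(0)}$ to $\G_2^{(0)}$, hence induces an isomorphism $\full{\G_1}\arr\full{\G_2}$ intertwining composition of bisections. This isomorphism carries the derived subgroup to the derived subgroup, and multisections of degree $d$ to multisections of degree $d$, so it restricts to isomorphisms $\symm(\G_1)\arr\symm(\G_2)$ and $\alt(\G_1)\arr\alt(\G_2)$; and since $\Phi$ induces isomorphisms of the chain complexes $C_c(\G_i^{(\bullet)},\Z)$, it intertwines the two index maps and therefore carries $\mathsf{K}(\G_1)$ onto $\mathsf{K}(\G_2)$.

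The substance is the converse, which I would deduce from Theorem~\ref{th:rubin} after establishing a reconstruction lemma: \emph{if $\G$ is a minimal groupoid of germs and $\alt(\G)\le H\le\full\G$, then, viewing $H$ as a group of homeomorphisms of $\G^{(0)}$, the groupoid of germs of its action is isomorphic as a topological groupoid to $\G$.} The candidate isomorphism sends a germ $(h,x)$ of the action to the unique element of the bisection $h$ with source $x$. It is a continuous open groupoid homomorphism by inspection, and it is well defined and injective because in a groupoid of germs an element of a bisection is uniquely determined by its source together with the germ there of the associated homeomorphism of $\G^{(0)}$. The delicate point is surjectivity: given $\gamma\in\G$ with $\be(\gamma)=x$, $\en(\gamma)=y$, I must realize $\gamma$ as the germ at $x$ of an element of $\alt(\G)$. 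If $x\neq y$, I pick a third point $z$ of the orbit of $x$ distinct from $x$ and $y$ (the orbit is infinite since $\G^{(0)}$ is the Cantor set) and apply the construction of Lemma~\ref{lem:smallmsections} to $\{x,y,z\}$ with $\gamma$ itself chosen as the connecting arrow from $x$ to $y$; the $3$-cycle of the resulting degree-$3$ multisection then lies in $\alt(\G)\le H$ and has germ $\gamma$ at $x$. If $x=y$, so that $\gamma$ is a germ with nontrivial isotropy which no single element of a multisection can realize, I first factor it through a point $z\neq x$ of the orbit of $x$: I take any arrow $\gamma_1\colon x\to z$ in $\G$ and set $\gamma_2=\gamma\gamma_1^{-1}$, so that $\gamma=\gamma_2\gamma_1$; by the previous case there are $g_1,g_2\in\alt(\G)$ whose germs at $x$ and $z$ are $\gamma_1$ and $\gamma_2$, and then $g_2g_1\in\alt(\G)\le H$ has germ $\gamma$ at $x$.

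Granting the lemma, the converse is immediate. Suppose one of $(2)$--$(6)$ holds, say there is a group isomorphism $\phi\colon H(\G_1)\arr H(\G_2)$ where $H(\G_i)$ is the corresponding subgroup ($\full{\G_i}$, $\mathsf{D}(\G_i)$, $\mathsf{K}(\G_i)$, $\symm(\G_i)$, or $\alt(\G_i)$); in each case $\alt(\G_i)\le H(\G_i)\le\full{\G_i}$, using $\alt(\G_i)\le\symm(\G_i)\le\mathsf{K}(\G_i)$ and $\alt(\G_i)\le\mathsf{D}(\G_i)\le\mathsf{K}(\G_i)$. Since $\G_i$ is a groupoid of germs, $\full{\G_i}$---hence $H(\G_i)$---acts faithfully by homeomorphisms on the Cantor set $\G_i^{(0)}$, which is locally compact and Hausdorff; and the action of $H(\G_i)$ is locally minimal with respect to the basis of nonempty clopen sets, because for any such $U$, any $x\in U$ and any nonempty clopen $V\subseteq U$ one moves $x$ into $V$ by the $3$-cycle of a degree-$3$ multisection with domain inside $U$ (choose a point of the orbit of $x$ lying in $V$, an auxiliary orbit point in $U$ distinct from $x$, and apply Lemma~\ref{lem:smallmsections}), so the $H(\G_i)_{(U)}$-orbit of $x$ is dense in $U$. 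By Theorem~\ref{th:rubin} there is a homeomorphism $F\colon\G_1^{(0)}\arr\G_2^{(0)}$ with $\phi(g)=F\circ g\circ F^{-1}$ for all $g\in H(\G_1)$. Then $(g,x)\mapsto(\phi(g),F(x))$ is an isomorphism of topological groupoids from the groupoid of germs of the action of $H(\G_1)$ on $\G_1^{(0)}$ to that of $H(\G_2)$ on $\G_2^{(0)}$, and by the lemma these groupoids are $\G_1$ and $\G_2$; this proves $(1)$.

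I expect the main obstacle to be the surjectivity half of the reconstruction lemma: producing from the comparatively small group $\alt(\G)$ an element whose germ at a prescribed point is a prescribed element of $\G$---in particular a prescribed element of an isotropy group---which is exactly where minimality (infinitude of orbits, together with the factorization $\gamma=\gamma_2\gamma_1$) is needed. The verification of local minimality required to invoke Rubin's theorem is a variant of the same construction.
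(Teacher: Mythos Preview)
Your proposal is correct and follows the same approach as the paper, namely deducing the result from Rubin's reconstruction theorem (Theorem~\ref{th:rubin}); the paper in fact gives no proof beyond citing Rubin and noting that the equivalence of (1)--(4) was already established by Matui. You have supplied precisely the two ingredients the paper leaves implicit: the verification that any subgroup between $\alt(\G)$ and $\full\G$ acts locally minimally on $\G^{(0)}$, and the reconstruction lemma that the groupoid of germs of such a subgroup recovers $\G$; both arguments are sound, including the factorization $\gamma=\gamma_2\gamma_1$ used to handle isotropy elements in the surjectivity step.
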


Equivalence of statements (1)--(4) was proved in~\cite{matui:fullonesided}. For methods of
proving similar statements without using M.~Rubin's theorem, 
see~\cite{matui:fullonesided,medynets:reconstruction}.

\section{Simplicity of $\alt(\G)$}

\begin{theorem}
\label{th:simple}
Suppose that $\G$ is a minimal groupoid of germs. Then every
non-trivial subgroup of $\full\G$ normalized by $\alt(\G)$ contains
$\alt(\G)$. In particular, $\alt(\G)$ is simple and is contained in
every non-trivial normal subgroup of $\full\G$. 
\end{theorem}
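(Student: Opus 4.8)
The plan is to prove the key assertion — that every non-trivial subgroup $N \le \full\G$ normalized by $\alt(\G)$ contains $\alt(\G)$ — and then deduce the two "in particular" statements formally. For the formal deductions: taking $N = \alt(\G)$ shows $\alt(\G)$ has no proper non-trivial subgroup normalized by itself, i.e. $\alt(\G)$ is simple; and since $\alt(\G)$ is normal in $\full\G$ (noted in the excerpt), any non-trivial normal subgroup $N \trianglelefteq \full\G$ is in particular normalized by $\alt(\G)$, hence contains $\alt(\G)$.

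So the work is the first assertion. First I would fix $N \ne \{1\}$ normalized by $\alt(\G)$ and pick $g \in N$, $g \ne 1$. Since $g$ acts non-trivially on $\G^{(0)}$ (or rather: since $g$, as a bisection, contains a non-unit germ, and $\G$ is a groupoid of germs, $g$ moves some point), there is a point $x_0$ with $g(x_0) = y_0 \ne x_0$; choose a bisection-representative and use continuity together with Hausdorffness of $\G^{(0)}$ (the Cantor set) to find a clopen set $V \ni x_0$ with $g(V) = W$ clopen and $V \cap W = \emptyset$. The idea now is the standard "small support commutator trick": take a multisection $\F$ of degree $3$ (or more) whose domain components all sit inside $V$, so that for $h \in \alt(\F)$ the supports of $h$ and $g^{-1}hg$ are disjoint (the latter lives in $W$); then the commutator $[g, h] = (g h g^{-1}) h^{-1}$ lies in $N$ (as $N$ is normalized by $\alt(\G) \ni h$, so $ghg^{-1} \in N$... wait — we need $g\in N$ and $h$ normalizing, i.e. $g^{-1}hg \in$ the group generated, so $[h,g] = h (g^{-1} h^{-1} g) \in N$) and, because the two factors have disjoint support, this commutator equals, on $V \cup W$, an element that "looks like $h$" on $V$. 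Concretely, I would arrange that $[h,g]$ restricted to $V$ coincides with $h$ and restricted to $W$ coincides with a conjugate of $h^{-1}$; so $N$ contains a copy of $\alt(\F)$ pushed into $V$. This gives: $N$ contains $\alt(\F_0)$ for at least one degree-$3$ multisection $\F_0$ supported in an arbitrarily small clopen set near $x_0$.

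The remaining, and I expect hardest, step is to promote "contains $\alt(\F_0)$ for one small multisection" to "contains $\alt(\F)$ for every degree-$3$ multisection", which is exactly $\alt_3(\G) = \alt(\G)$. Here minimality is essential. Given an arbitrary degree-$3$ multisection $\F$ with domain components $F_{11}, F_{22}, F_{33}$, I would use minimality to move the small multisection $\F_0$ around: conjugating $\alt(\F_0)$ by elements of $\full\G$ (which normalize $\alt(\G)$, hence preserve... no — conjugating $N$ by $\full\G$ need not preserve $N$, since $N$ is only $\alt(\G)$-normalized). This is the real obstacle, and the resolution is to only ever conjugate by elements of $\alt(\G)$ itself. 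So the strategy is: show that $\alt(\G)$ acts "richly enough" on small clopen sets — using minimality, for any two sufficiently small clopen sets $V, V'$ there is an element of $\alt(\G)$ carrying a multisection supported in $V$ to one supported in $V'$ — and then cover the domain of an arbitrary $\F$ by finitely many small clopen pieces, using Propositions~\ref{pr:covering} and~\ref{pr:unionintersection} to assemble $\alt(\F)$ from the small $\alt(\F^{(k)})$'s that, by the moving argument, all lie in $N$. The combinatorial input of Propositions~\ref{pr:covering} and~\ref{pr:unionintersection} (degree $\ge 3$ and $\ge 5$ respectively, but the degree-$5$ pieces can be cut back to degree $3$ via Proposition~\ref{pr:anydegreegen} on orbits of size $\ge 5$, or handled directly) lets one subdivide and reglue multisections without leaving the group generated by the pieces; minimality lets one transport pieces; and normality under $\alt(\G)$ is exactly what makes the transported pieces stay inside $N$. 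Carrying out the covering/transport bookkeeping carefully — in particular checking one can always realize the transport by an element of $\alt(\G)$ rather than of $\full\G$ — is where the bulk of the argument lies.
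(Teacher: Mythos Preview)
Your overall architecture matches the paper's proof exactly: a disjoint-support commutator trick to get some $\alt(\F)\le N$ for multisections supported in a small clopen $U$, followed by a minimality-and-covering argument (conjugating only by elements of $\alt(\G)$, then invoking Proposition~\ref{pr:covering}) to reach every multisection.

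There is, however, a genuine gap in your commutator step. The single commutator $[h,g]$ (or $[g^{-1},h]$) acts as $h$ on $V$ and as $gh^{-1}g^{-1}$ on $W=g(V)$; it is \emph{not} an element of $\alt(\F_0)$ for any multisection $\F_0$. What you actually obtain is the anti-diagonal copy $\{(h,\,gh^{-1}g^{-1}):h\in\alt(\F)\}\subset\alt(\F)\times\alt(g\F g^{-1})$ inside $N$, and this is not $\alt$ of a single multisection. The paper fixes this with one more commutator: for $h_1,h_2\in\alt(\F)$ one has $[[g^{-1},h_1],h_2]=[h_1,h_2]$, since $h_2$ is supported in $V$ and kills the $W$-part. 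As $[[g^{-1},h_1],h_2]\in N$ (two applications of ``$\alt(\G)$ normalizes $N$'') and $\alt(\F)$ is perfect, this yields $\alt(\F)\le N$ outright. Note this works for \emph{every} multisection with domain in $U$, so you get $\alt(\G|_U)\le N$ in one stroke, not merely one $\alt(\F_0)$.

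For the second step your instincts are right. The paper does not need Proposition~\ref{pr:unionintersection} here: given an arbitrary degree-$5$ multisection $\F$ and a point $x_1\in F_{1,1}$, it builds an explicit element $g\in\alt(\G)$ (a product of disjoint $3$-cycles, one per component) that conjugates a restriction $\F'$ of $\F$ into $\G|_U$; compactness of $F_{1,1}$ plus Proposition~\ref{pr:covering} then gives $\alt(\F)\le N$. Your worry about realizing the transport inside $\alt(\G)$ rather than $\full\G$ is exactly the right concern, and the explicit $3$-cycle construction is how it is discharged.
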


\begin{proof}
Let $U\subset\G^{(0)}$ be a clopen subset. We will naturally identify
$\alt(\G|_U)$ with a subgroup of $\alt(\G)$, extending elements of
$\alt(\G|_U)$ identically on $\G^{(0)}\setminus U$.

Let $N\le\full\G$ be a subgroup normalized by $\alt(\G)$,
and let $g\in N$ be a non-trivial element. Since
$\G$ is a groupoid of germs, there exists an element $\gamma\in g$
such that $\be(\gamma)\ne\en(\gamma)$. There
exists a clopen neighborhood $U$ of $\be(\gamma)$ such that $U\cap
g(U)=\emptyset$. Let $\F$ be a multisection of degree $\ge 3$ whose
domain is a subset of $U$. Let $h_1, h_2\in\alt(\F)$. Then $g\F g^{-1}$
is a multisection with domain in $g(U)$, and $gh_1g^{-1}\in\alt(g\F
g^{-1})$. It follows that $[g^{-1}, h_1]=gh_1^{-1}g^{-1}h_1$ acts trivially outside
$U\cup g(U)$, as $h_1$ on $U$, and as $gh_1^{-1}g^{-1}$ on
$g(U)$. Consequently, $[[g^{-1}, h_1], h_2]=[h_1, h_2]$. But
$[[g^{-1}, h_1], h_2]\in N$ since $N$ is normalized by $\alt(\G)$,
$h_1, h_2\in\alt(\F)\subset\alt(\G)$ and $g\in N$. It
follows that the derived subgroup of $\alt(\F)$ is contained in
$N$. But every element of $\alt(\F)$ is a commutator of elements of
$\alt(\F)$, hence the derived subgroup of $\alt(\F)$ is $\alt(\F)$, so
that we have $\alt(\F)\le N$.

We have proved that there exists a non-empty clopen subset $U$ of
$\G^{(0)}$ such that $\alt(\G|_U)\le N$.

It remains to prove that the normal closure of $\alt(\G|_U)$ in $\alt(\G)$
is equal to $\alt(\G)$. It is enough to show that for every multisection $\F$ of
degree 5, the group $\alt(\F)$ is contained in the normal closure of $\alt(\G|_U)$ in
$\alt(\G)$, see Proposition~\ref{pr:anydegreegen}.

Let $\F=\{F_{i, j}\}_{i, j=1}^5$ be a multisection. Let $x_1\in F_{1,
  1}$ be arbitrary, and denote $x_i=\en(F_{1, i}x_1)$. Then all $x_i\in
F_{i, i}$ belong to one $\G$-orbit. Since $\G$ is minimal, all its
orbits are infinite and dense, and we can find elements
$\gamma_i, \delta_i\in\G$ such that $\be(\gamma_i)=\en(\delta_i)=x_i$, the units
$\en(\gamma_i)$ belong to $U$, and all the units $x_1, \ldots x_d,
\en(\gamma_1), \ldots, \en(\gamma_d), \be(\delta_1), \ldots,
\be(\delta_d)$ are pairwise different.

\begin{figure}
\centering
\includegraphics{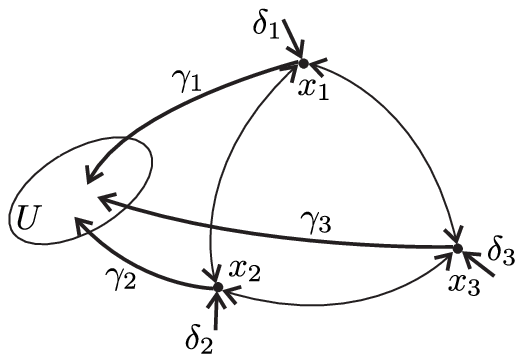}
\caption{Proof of simplicity of $\alt(\G)$}
\label{fig:simplicity}
\end{figure}

Since
$\G$ is \'etale, we can find  bisections $G_i\ni\gamma_i$,
$D_i\ni\delta_i$ and $F_{i, j}'\subset F_{i, j}$ such that
$\F'=\{F_{i, j}'\}_{i, j=1}^d$ is a multisection,
$F_{i, i}'=\be(G_i)=\en(D_i)$, $\en(G_i)\subset U$, and all the sets $\be(G_1),
\ldots \be(G_d), \en(G_1), \ldots, \en(G_d), \be(D_1), \ldots, \be(D_d)$ are pairwise disjoint.

Then $\mathcal{H}=\{G_jF_{i, j}'G_i^{-1}\}_{i, j=1}^d$ is a
multisection with domain in $U$. For every $i$ denote $C_i=G_i\cup
D_i^{-1}\cup D_iG_i^{-1}$. Then $\be(C_i)=\en(C_i)$, the sets
$\be(C_i)$, $i=1, \ldots, d$, are disjoint, and
$g=\bigcup_{i=1}^dC_i\cup\left(\G^{(0)}\setminus\bigcup_{i=1}^d\be(C_i)\right)$
is an element of $\alt(\G)$. It satisfies $G_jF_{i, j}'G_i^{-1}=gF_{i,
  j}g^{-1}$ and $g\alt(\F')g^{-1}=\alt(\mathcal{H})\le\alt(\G|_U)$. It
follows that $\alt(\F')\le g^{-1}\alt(\G|_U)g$.

We have shown that for every point $x_1\in F_{1, 1}$ there exists a
restriction $\F'$ of $\F$, and an element $g\in\alt(\G)$ such that
$x_1$ belongs to the domain of $\F'$, and $\alt(\F')\le
g^{-1}\alt(\G|_U)g$. It follows then from compactness of $F_{1, 1}$ and
Proposition~\ref{pr:covering} that $\alt(\F)$ is contained in the
normal closure of $\alt(\G|_U)$ in $\alt(\G)$.
\end{proof}

It was proved by H.~Matui (see~\cite[Theorem~4.7]{matui:fullonesided}
and~\cite[Theorem~4.16]{matui:fullonesided}) that if $\G$ is a minimal
groupoid of germs belonging
either to the class of \emph{almost finite} or to the class of
\emph{purely infinite} groupoids (see~\cite{matui:fullonesided} for
the definitions), then any non-trivial subgroup of $\full\G$
normalized by the commutator subgroup $\mathsf{D}(\G)$ contains
$\mathsf{D}(\G)$.  Theorem~\ref{th:simple} implies that we have
$\mathsf{D}(\G)=\alt(\G)$ for such groupoids. It would be interesting
to find an example of a minimal groupoid of germs $\G$ such that
$\alt(\G)\ne\mathsf{D}(\G)$.

\subsection{Example: AF-groupoids and LDA-groups}
\label{ss:AFLDA}

The groups $\alt(\G)$ for a special class of \emph{$AF$ groupoids}
are well known, though they are usually defined without using terminology of
groupoids.

A \emph{Bratteli diagram} $B$ consists of two sequences $V_1, V_2, \ldots$
and $E_1, E_2, \ldots$ of finite sets together with maps $\be_n:E_n\arr
V_n$ and $\en_n:E_n\arr V_{n+1}$. We interpret $e\in E_n$ as an edge
connecting the vertices $\be_n(e)\in V_n$ and $\en_n(e)\in V_{n+1}$. We
assume that the maps $\be_n$ and $\en_n$ are surjective.

The associated \emph{space of paths} $\mathcal{P}$ of the diagram $B$ is the space
of sequences $(e_1, e_2, \ldots)\in E_1\times E_2\times\cdots$ such
that $\en_n(e_n)=\be_{n+1}(e_{n+1})$ for every $n\ge 1$. The topology is
induced by the direct product topology on $E_1\times E_2\times\cdots$,
where $E_n$ are discrete.

A \emph{finite path} is a sequence $(e_1, e_2, \ldots, e_n)$ such that
$\en_i(e_i)=\be_{i+1}(e_{i+1})$ for all $i=1, 2, \ldots, n-1$. The
vertices $\be(e_1)$ and $\en_n(e_n)$ are called the \emph{beginning}
and the \emph{end} of the path, respectively.

Suppose that $(e_1, e_2, \ldots, e_n)$, $(f_1, f_2, \ldots, f_n)$
is a pair of finite paths with coinciding ends. This
pair defines a homeomorphisms between clopen subsets of $\mathcal{P}$
mapping every path of the form $(e_1, e_2, \ldots, e_n, e_{n+1},
e_{n+2}, \ldots)$ to the path $(f_1, f_2, \ldots, f_n, e_{n+1},
e_{n+2}, \ldots)$. Let $\mathfrak{B}$ be the groupoid of germs of the
semigroup generated by such locally homeomorphisms. Such groupoids,
i.e., groupoids associated in the described way with Bratteli
diagrams, are called \emph{AF groupoids}, see~\cite{matui:etale}.

For $v\in V_n$ denote by $d(v)$ the number of finite paths $(e_1, e_2,
\ldots, e_n)$ ending in $v$. It is easy to see that
$\alt(\mathfrak{B})$ is isomorphic to a direct limit of the direct
products $G_n=\prod_{v\in V_n}\alt_{d(v)}$. The embeddings $G_n\arr
G_{n+1}$ are block-diagonal. Namely, if $\alpha\in\alt_{d(v)}$ is a
permutation of the set of paths ending in $v$ then it acts on all
continuations of the paths only changing the beginnings by
permutation $\alpha$, so that $\alpha$ is copied $|\be_{n+1}^{-1}(v)|$
times in the direct product $G_{n+1}$.
See more details
in~\cite{lavnek:diagonal}.

The groupoid $\mathfrak{B}$ is minimal if and only if
for every two vertices $v_1, v_2\in \bigcup_{n\ge 1}V_n$ there
exist finite paths starting in $v_1$ and $v_2$ and ending in one
vertex. If this condition is satisfied, then the group
$\alt(\mathfrak{B})$ is simple. Such simple block-diagonal limits of
direct products of alternating groups form an important class of
simple locally finite groups, see~\cite{leinenpuglisi:confined,leinenpuglisi:1type}.

\section{Expansive groupoids and finite generation of $\alt(\G)$}

\subsection{Compactly generated and expansive groupoids}

\begin{defi}
\label{def:compactgenset}
A groupoid $\G$ is said to be \emph{compactly generated} if there
exists a compact set $S\subset\G$ (called a \emph{generating set}) such that $\G=\bigcup_{n\ge 1}(S\cup
S^{-1})^n$.
\end{defi}

Here $(S\cup S^{-1})^n$ is the set of products
$\gamma_1\gamma_2\cdots\gamma_n$ such that $\gamma_i\in S\cup S^{-1}$.

The general definition of a compactly generated groupoid (in the case
when $\G^{(0)}$ is not a Cantor set) is slightly different,
see~\cite{haefliger:compactgen,nek:hyperbolic}.

Suppose that $S$ is a compact generating set of $\G$. Then any set
$S_1\supseteq S$ is also a generating set. Consequently, there exists
a compact open generating set of $\G$.

We will need the following technical lemma later.

\begin{lemma}
\label{lem:nounits}
If $\G$ is compactly generated and every orbit has more than one point, then there exists a compact open
generating set $S$ of $\G$ such that for every $\gamma\in S$ we have $\be(\gamma)\ne\en(\gamma)$.
\end{lemma}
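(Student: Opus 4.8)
The plan is to start from an arbitrary compact open generating set and excise its ``diagonal part'' by local surgery, using the hypothesis on orbits. Since $\G^{(0)}$ is compact and $S$ is covered by bisections, and since any superset of a generating set is again generating, I would first reduce to the case where $S=\bigcup_{k=1}^nF_k$ is a finite union of compact open bisections. Write $\phi_k\colon\be(F_k)\arr\en(F_k)$ for the associated homeomorphism and put $Z_k=\{x\in\be(F_k)\ :\ \phi_k(x)=x\}$; this closed (but in general not open) set records exactly the units over which $F_k$ meets the diagonal, and its failure to be open is the only real difficulty.

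For the local step, fix $k$ and $x\in Z_k$. Because the orbit of $x$ has more than one point there is $\delta\in\G$ with $\be(\delta)=x$ and $\en(\delta)\ne x$; using that $\G^{(0)}$ is Hausdorff and totally disconnected I would separate $x$ and $\en(\delta)$ by disjoint clopen sets $P,Q$ and shrink a bisection through $\delta$ to a compact open bisection $G\ni\delta$ with $\be(G)\subseteq P$ and $\en(G)\subseteq Q$, so that every element of $G$ and of $G^{-1}$ is off-diagonal. By continuity of $\phi_k$ there is a clopen neighbourhood $V$ of $x$ inside $\be(F_k)\cap P$ with $\phi_k(V)\subseteq\be(G)$. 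Then, writing $F_kV=\{\gamma\in F_k\ :\ \be(\gamma)\in V\}$, the product $GF_kV$ is a well-defined compact open bisection with $\be(GF_kV)\subseteq P$ and $\en(GF_kV)\subseteq Q$, hence off-diagonal, and the identity $F_kV=G^{-1}\,(GF_kV)$ rewrites the restriction $F_kV$ as a product of two off-diagonal compact open bisections.

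To assemble, cover each compact set $Z_k$ by finitely many such clopen neighbourhoods and let $V^{(k)}$ be their union; then $\{\gamma\in F_k\ :\ \be(\gamma)\notin V^{(k)}\}$ is already off-diagonal, while $F_kV^{(k)}$ lies in a finite union of products $G^{-1}(GF_kV)$. Let $S'$ be the union of all the compact open bisections manufactured above --- the off-diagonal complements $\{\gamma\in F_k\ :\ \be(\gamma)\notin V^{(k)}\}$ together with all the $G^{-1}$ and all the $GF_kV$ --- and set $S_1=S'\cup S'^{-1}$. This $S_1$ is a compact open set every element of which is off-diagonal, and by construction every element of $S\cup S^{-1}$ is a product of at most two elements of $S_1$; hence $\G=\bigcup_{n\ge1}(S\cup S^{-1})^n\subseteq\bigcup_{n\ge1}S_1^n$, so $S_1$ is the generating set we want.

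I expect the main obstacle to be exactly the non-openness of $Z_k$: one cannot delete the diagonal part of a bisection all at once, and the way around it is the factorization $F_kV=G^{-1}(GF_kV)$ on small clopen pieces, which replaces one ``bad'' bisection by two ``good'' ones, glued together by a straightforward compactness argument. The hypothesis that every orbit has more than one point is used precisely once, to produce $\delta$ over each point of $Z_k$, and total disconnectedness of $\G^{(0)}$ enters only to arrange that the supports of the auxiliary bisections $G$ are clopen and disjoint from their ranges.
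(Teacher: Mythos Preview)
Your proof is correct and follows essentially the same route as the paper: both arguments cover the diagonal part of each bisection by small clopen pieces and use an auxiliary off-diagonal bisection $G$ (the paper calls it $U$) to rewrite each such piece via the factorization $F_kV=G^{-1}(GF_kV)$, then finish by compactness. The only cosmetic difference is that the paper phrases the local step as building a neighborhood $U_g=U^{-1}UF$ of each diagonal element $g$ and replacing it by $W_g=U\cup UF$, whereas you restrict to clopen source sets and separate the off-diagonal remainder explicitly; the underlying idea is identical.
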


\begin{proof}
Let $S$ be an arbitrary compact open generating set of $\G$. Assume
that $\G^{(0)}\subset S$ (otherwise, replace $S$ by $\G^{(0)}\cup S$). 

Let $\mathcal{S}$ be a finite cover of $S$ by bisections. Suppose that $F\in\mathcal{S}$ and $g\in F$ are such that $\be(g)=\en(g)$.  There exists $h\in\G$ such that $\be(h)=\be(g)$ and $\en(h)\ne\be(h)$. We can choose a sufficiently small bisection $U$ containing $h$ such that $\be(U)\cap\en(U)=\emptyset$ and $\be(UF)\cap\en(UF)=\emptyset$. Then $U_g=U^{-1}UF$ is a neighborhood of $g$. Denote $W_g=U\cup UF$. Then $W_g$ does not contain elements of isotropy groups, and $U_g$ is contained in the groupoid generated by $W_g$.

 If $g\in F$ is such that $\be(g)\ne\en(g)$, then choose $U_g\subset F$ such that $\be(U_g)\cap\en(U_g)=\emptyset$, and denote $W_g=U_g$. The collection $\{U_g\}_{g\in F}$ is an open covering of $F$, hence there exists a finite sub-covering $\{U_{g_i}\}$. Let $F'$ be the union of the corresponding sets $W_g'$. Then $F'$ is compact, $F$ is contained in the groupoid generated by $F'$, and $F'$ does not contain elements of isotropy groups.
\end{proof}

\begin{defi}
Let $S$ be an open compact generating set of $\G$. A finite cover $\mathcal{S}$
of $S$ by  bisections is said to be \emph{expansive} if
the set
$\bigcup_{n\ge 1}\left(\mathcal{S}\cup\mathcal{S}^{-1}\right)^n$ is a
basis of topology of $\G$.
\end{defi}

\begin{proposition}
Suppose that there exists an expansive cover of a compact generating
set of  $\G$. Then for every compact generating set $S$
of $\G$ any finite cover $\mathcal{S}$ of $S$ by sufficiently small
 bisections is expansive.
\end{proposition}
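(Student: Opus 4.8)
The plan is to show that expansivity is a robust property: once one cover of one generating set generates a basis, then any sufficiently fine cover of any generating set does too. First I would fix an expansive cover $\mathcal{T}$ of a compact generating set $T$, so that $\mathcal{B}_{\mathcal{T}}:=\bigcup_{n\ge 1}(\mathcal{T}\cup\mathcal{T}^{-1})^n$ is a basis of the topology of $\G$. Let $S$ be an arbitrary compact generating set. Enlarging $S$ if necessary (which keeps it a generating set), I may assume $T\subseteq S$ and $\G^{(0)}\subseteq S$; since $\G^{(0)}\cup S$ is still compact open and generates, this costs nothing. Now let $\mathcal{S}$ be any finite cover of $S$ by bisections whose elements are ``sufficiently small'' in the sense to be pinned down below. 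The goal is to prove $\mathcal{B}_{\mathcal{S}}:=\bigcup_{n\ge 1}(\mathcal{S}\cup\mathcal{S}^{-1})^n$ is a basis.

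The key step is to produce each element of $\mathcal{T}$ as a union of elements of $\mathcal{B}_{\mathcal{S}}$. Fix $V\in\mathcal{T}$. For each $\gamma\in V$, since $T\subseteq S\subseteq\bigcup_{n}(S\cup S^{-1})^n$ and $\mathcal{S}$ covers $S$, there is a product $W_1W_2\cdots W_m\in\mathcal{B}_{\mathcal{S}}$ (with each $W_i\in\mathcal{S}\cup\mathcal{S}^{-1}$) containing $\gamma$; here I use that $V$ is open and $\mathcal{S}$ consists of bisections, so the product of the chosen $W_i$ is again a bisection and forms a neighborhood of $\gamma$. Shrinking, I may take this bisection inside $V$. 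By compactness of $V$ finitely many such bisections cover $V$, exhibiting $V\in\mathcal{B}_{\mathcal{S}}$ (meaning $V$ is a finite union of members of $\mathcal{B}_{\mathcal{S}}$, hence every member of $\mathcal{B}_{\mathcal{T}}$ is too, by composing). Consequently $\mathcal{B}_{\mathcal{T}}$ refines to $\mathcal{B}_{\mathcal{S}}$ in the sense that every element of the basis $\mathcal{B}_{\mathcal{T}}$ is a union of elements of $\mathcal{B}_{\mathcal{S}}$; since $\mathcal{B}_{\mathcal{T}}$ is already a basis, $\mathcal{B}_{\mathcal{S}}$ is a basis as well. This last inference is purely formal: a family of open sets refining a basis and consisting of unions-of-basis-elements is itself a basis.

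The clause ``sufficiently small'' must be read as: small enough that the above product-of-$W_i$ bisections can be arranged inside the prescribed $V$ and, symmetrically, small enough that every member of $\mathcal{S}$ itself lies in $\mathcal{B}_{\mathcal{T}}$ (so that $\mathcal{B}_{\mathcal{S}}$ consists of genuinely open sets, which is automatic once each $W\in\mathcal{S}$ is a bisection contained in a finite union of basis sets — and since $\mathcal{B}_{\mathcal{T}}$ is a basis, any bisection is such a union). The only subtlety is bookkeeping with the partial multiplication: products $W_1\cdots W_m$ are only defined on the locus where the sources and ranges match up, so when I shrink the $W_i$ I must shrink them compatibly; this is routine because $\G$ is étale and $\be,\en$ restrict to homeomorphisms on bisections, so pulling back an open condition along these maps stays open.

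The main obstacle I anticipate is making the ``sufficiently small'' quantifier uniform over the finitely many $V\in\mathcal{T}$ while simultaneously controlling the finitely many members of $\mathcal{S}$; this is handled by a single finiteness argument — there are finitely many $V$, and for each we used only finitely many product-bisections, each built from finitely many $W_i$ — so one Lebesgue-number-style choice suffices. Everything else is the bisection calculus under étaleness, which I would not spell out in detail.
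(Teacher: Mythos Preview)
Your approach is essentially the same as the paper's: show each $V\in\mathcal{T}$ is a finite union of products from $\mathcal{B}_{\mathcal{S}}$ contained in $V$, deduce that $\mathcal{B}_{\mathcal{S}}$ refines the basis $\mathcal{B}_{\mathcal{T}}$, and invoke a Lebesgue-number argument to pass from one sufficiently fine cover to all of them. Two points need correcting, however.

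First, the step ``enlarging $S$ if necessary, I may assume $T\subseteq S$ and $\G^{(0)}\subseteq S$'' is not a legitimate reduction. The proposition concerns covers of the \emph{given} $S$; once you replace $S$ by $S\cup T\cup\G^{(0)}$ you are proving a different statement, and a cover of the original $S$ need not be a cover of the enlarged one. Fortunately the enlargement is also unnecessary: the only place you use it is in the chain $T\subseteq S\subseteq\bigcup_n(S\cup S^{-1})^n$, but $T\subseteq\G=\bigcup_n(S\cup S^{-1})^n$ holds simply because $S$ is a generating set, so every $\gamma\in V\subseteq T$ is a product $s_1\cdots s_m$ with $s_i\in S\cup S^{-1}$ regardless.

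Second, the definition of ``sufficiently small'' is circular as written: you fix $\mathcal{S}$, build the products $W_1\cdots W_m$ from it, then declare $\mathcal{S}$ small enough that these very products fit inside $V$. The paper resolves this by a two-step process: start with an \emph{arbitrary} finite cover $\mathcal{S}_1$ of $S$, use it to cover each $V\in\mathcal{T}$ by finitely many products, then pass to a refinement of $\mathcal{S}_1$ fine enough that the refined products sit inside $V$; finally, any cover subordinate to this refinement is expansive, and Lebesgue's lemma gives the ``sufficiently small'' threshold. Your Lebesgue-number remark at the end is the right instinct, but the argument needs to be ordered this way to avoid the circularity.
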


\begin{proof}
Note that if $\mathcal{S}$ is an expansive set of  bisections,
then any finite set of  bisections containing
$\mathcal{S}$ is also expansive. If $\mathcal{S}_1$ is a
cover subordinate to an expansive cover, then
$\mathcal{S}_1$ is also expansive.

Let $S$ be a compact generating set such that there exists an
expansive cover $\mathcal{S}$ of $S$. Let $S_1$ be another compact
generating set. Let $\mathcal{S}_1$ be an arbitrary cover of $S_1$ by
bisections. For every $F\in\mathcal{S}$ and every $\gamma\in F$ there
exists a product of elements of $\mathcal{S}_1$ containing
$\gamma$. It follows that $F$ can be covered by a finite number of
products of elements of $\mathcal{S}_1$. Replacing $\mathcal{S}_1$ by
a refinement, we can find for every $F\in\mathcal{S}$ and every
$\gamma\in F$ a product $F_1$ of a finite number of elements of
$\mathcal{S}_1$ such that $\gamma\in F_1\subset F$. Then expansivity
of $\mathcal{S}_1$ will follow from expansivity of
$\mathcal{S}$. Since any cover of $S_1$ subordinate to $\mathcal{S}_1$
is also expansive, any cover of $S_1$ by sufficiently small bisections
is expansive, by Lebesgue's Covering Lemma.
\end{proof}

\begin{proposition}
\label{pr:bebasis}
Suppose that $\mathcal{S}$ is a finite cover by bisections of a compact generating
set $S$. Denote, for $x\in\G^{(0)}$ and $n\ge 1$, by $U_n(x)$
intersection of all sets of the form $\be(F)$
containing $x$, for $F\in(\mathcal{S}\cup\mathcal{S}^{-1})^k$ and $k\le n$. Then the following
conditions are equivalent.
\begin{enumerate}
\item The cover $\mathcal{S}$ is expansive.
\item The sets $\be(F)$ for $F\in\bigcup_{n\ge 1}(\mathcal{S}\cup\mathcal{S}^{-1})^n$ form a
basis of topology on $\G^{(0)}$.
\item For every two different units $x, y\in\G^{(0)}$ there exist
  $A_1, A_2\in\bigcup_{n\ge 1}(\mathcal{S}\cup\mathcal{S}^{-1})^n$
such that $x\in\be(A_1)$, $y\in\be(A_2)$, and $\be(A_1)\cap\be(A_2)=\emptyset$.
\item For every $x\in\G^{(0)}$ we have $\bigcap_{n\ge 1}U_n(x)=\{x\}$.
\end{enumerate}
\end{proposition}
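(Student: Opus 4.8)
The plan is to prove the cycle of implications $(1)\Rightarrow(2)\Rightarrow(3)\Rightarrow(4)\Rightarrow(2)$ and separately $(2)\Rightarrow(1)$, so it suffices to establish $(1)\Leftrightarrow(2)$ and $(2)\Leftrightarrow(3)\Leftrightarrow(4)$. The core tool throughout is that for a bisection $F$ the map $\be\colon F\arr\be(F)$ is a homeomorphism, so studying the topology of $\bigcup_n(\mathcal{S}\cup\mathcal{S}^{-1})^n$ near an element $\gamma$ is the same as studying the topology of the sets $\be(F)$ near $\be(\gamma)$.

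First I would prove $(1)\Rightarrow(2)$. Assume $\bigcup_n(\mathcal{S}\cup\mathcal{S}^{-1})^n$ is a basis of topology of $\G$. Restricting this basis to the clopen subspace $\G^{(0)}$, and using that every element $F$ of the basis is a bisection so that $F\cap\G^{(0)}$ is, up to shrinking, of the form $\be(F')$ for a sub-bisection $F'$, we get that the sets $\be(F)$, $F\in\bigcup_n(\mathcal{S}\cup\mathcal{S}^{-1})^n$, form a basis of $\G^{(0)}$. Conversely, for $(2)\Rightarrow(1)$: given $\gamma\in\G$ and an open $V\ni\gamma$, choose a bisection $H$ with $\gamma\in H\subset V$; then using $(2)$ pick $F\in\bigcup_n(\mathcal{S}\cup\mathcal{S}^{-1})^n$ with $\be(\gamma)\in\be(F)\subset\be(H)$, and replace $F$ by $F\cdot(H|_{\be(F)})^{-1}\cdot(H|_{\be(F)})$ — more carefully, intersect $F$ with the preimage under $\be$ of a small enough clopen set so that the restriction lands inside $H$; since $H$ and $F$ are both bisections with the same source set after shrinking and $\G$ is étale, a sufficiently small sub-bisection of $F$ with source contained in $\be(H)$ is contained in $H$, hence in $V$. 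This shows $\bigcup_n(\mathcal{S}\cup\mathcal{S}^{-1})^n$ is a basis.

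Next, $(2)\Rightarrow(3)$ is immediate: given distinct $x,y$, a basis of $\G^{(0)}$ separates points of the Hausdorff space $\G^{(0)}$ by disjoint basic sets. For $(3)\Rightarrow(4)$: fix $x$ and suppose $y\in\bigcap_n U_n(x)$ with $y\ne x$; by $(3)$ there are $A_1,A_2$ in the generated collection with $x\in\be(A_1)$, $y\in\be(A_2)$, $\be(A_1)\cap\be(A_2)=\emptyset$; but $x\in\be(A_1)$ forces $U_n(x)\subset\be(A_1)$ for $n$ at least the length of $A_1$, so $y\in\be(A_1)$, contradicting disjointness. Hence $\bigcap_n U_n(x)=\{x\}$. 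Finally $(4)\Rightarrow(2)$: the sets $U_n(x)$ are clopen (finite intersections of sets $\be(F)$, each of which is compact open), they are nested and decreasing, and $\bigcap_n U_n(x)=\{x\}$; since $\G^{(0)}$ is compact, for any clopen neighborhood $W$ of $x$ we get $U_n(x)\subset W$ for some $n$ (otherwise the closed sets $U_n(x)\setminus W$ would have the finite intersection property and nonempty intersection). Each $U_n(x)$ is itself a finite intersection of sets $\be(F)$ each containing $x$, so one of them, which contains $x$, is contained in $W$ — but actually we must be slightly careful: $U_n(x)$ need not equal a single $\be(F)$. However, $U_n(x)$ is a finite intersection of such sets, and a finite intersection of sets of the form $\be(F)$ with $F\in(\mathcal{S}\cup\mathcal{S}^{-1})^{k}$ is again of the form $\be(F')$ for $F'$ a product, since the $F$'s can be composed along $\G^{(0)}$: if $x\in\be(F_1)\cap\be(F_2)$ then $\be(F_2|_{\be(F_1)}\cdot\text{(restriction)})$ realizes the intersection. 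This last bookkeeping — showing the collection $\{\be(F)\}$ is closed under the finite intersections that appear — is the main technical obstacle, and it is handled by noting that $(\mathcal{S}\cup\mathcal{S}^{-1})^n$ is closed under restricting sources to clopen subsets that themselves lie in the image of some $\be(A)$, together with the fact that restricting a product of bisections to a clopen subset of its source yields another product in $(\mathcal{S}\cup\mathcal{S}^{-1})^n$ (after inserting an identity bisection from $\mathcal{S}$, which we may assume contains $\G^{(0)}$, or a cover of it). Once this is in place, all four implications close up the cycle.
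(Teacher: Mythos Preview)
Your argument for $(2)\Rightarrow(1)$ has a genuine gap. You choose a product $F$ with $\be(\gamma)\in\be(F)\subset\be(H)$ and then claim that ``a sufficiently small sub-bisection of $F$ with source contained in $\be(H)$ is contained in $H$''. This is false: nothing forces $F$ to contain $\gamma$, and two bisections with the same source set need not agree. For a concrete counterexample, take $\G=\Z\times\X$ for a $\Z$-action, $H=\{1\}\times\X$ and $F=\{2\}\times U$; then $\be(F)\subset\be(H)$ but no sub-bisection of $F$ lies in $H$. Your attempted replacement $F\cdot(H|_{\be(F)})^{-1}\cdot(H|_{\be(F)})$ does not help, since $H$ is not a product in $(\mathcal{S}\cup\mathcal{S}^{-1})^*$.

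The missing observation, which also cleanly handles your ``bookkeeping'' in $(4)\Rightarrow(2)$, is the identity $\be(F)=F^{-1}F$. From this, the restriction of any product $G_1\cdots G_m$ to $\be(F_1\cdots F_n)$ is itself a product, namely $G_1\cdots G_m\,F_n^{-1}\cdots F_1^{-1}F_1\cdots F_n$. For $(2)\Rightarrow(1)$ you then argue: first pick a product $G$ with $\gamma\in G$ (possible since $S$ generates $\G$); next, since $G\cap V$ is an open bisection containing $\gamma$, use (2) to find a product $F$ with $\be(\gamma)\in\be(F)\subset\be(G\cap V)$; then $G\cdot F^{-1}F$ is a product containing $\gamma$ and contained in $V$. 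The same identity gives $\be(F_1)\cap\be(F_2)=F_1^{-1}F_1F_2^{-1}F_2$, so intersections of source-sets are again source-sets of products, with no need to assume $\G^{(0)}\subset\bigcup\mathcal{S}$ --- an assumption you are not entitled to make, since $\mathcal{S}$ is fixed in the statement. The remaining implications in your outline are fine and match the paper's proof.
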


\begin{proof}
The implication (1)$\Longrightarrow$(2) follows directly from the
definition of an \'etale groupoid.

For every bisection $F\subset\G$ we have $\be(F)=F^{-1}F$, hence
for every two products
$F_1F_2\cdots F_n$ and $G_1G_2\cdots G_m$ of elements of $\mathcal{S}\cup\mathcal{S}^{-1}$
the restriction \[G_1G_2\cdots G_m\be(F_1F_2\cdots F_n)\] of
$H_1H_2\cdots H_m$ to $\be(F_1F_2\cdots F_n)$ is equal to a product
of elements of $\mathcal{S}\cup\mathcal{S}^{-1}$. This shows that (2) implies (1).

It also shows that for any two products $F_1F_2\cdots F_n$ and $G_1G_2\cdots
G_m$ of elements of $\mathcal{S}\cup\mathcal{S}^{-1}$ the set $\be(F_1F_2\cdots
F_n)\cap\be(G_1G_2\cdots G_m)$ is a product of elements of
$\mathcal{S}\cup\mathcal{S}^{-1}$ (recall that if $U, V\subset\G^{(0)}$, then $U\cap
V=UV$).

The implication (2)$\Longrightarrow$(3) is obvious. Let us show the
converse implication. Let $x\in\G^{(0)}$, and let $U$ be an open
neighborhood of $x$. For every
unit $y\in\G^{(0)}\setminus U$ there exist products $U_y$ and $V_y$ of elements of
$\mathcal{S}\cup\mathcal{S}^{-1}$ such that $\be(U_y)\cap\be(V_y)=\emptyset$, $x\in\be(U_y)$,
and $y\in\be(V_y)$. By compactness of $\G^{(0)}\setminus U$, we can
find a finite collection $U_{y_1}, U_{y_2}, \ldots, U_{y_k}$,
$V_{y_1}, V_{y_2}, \ldots, U_{y_k}$ such that
$x\in\bigcap_{1\le i\le k}\be(U_{y_i})$,
$\be(U_{y_i})\cap\be(V_{y_i})=\emptyset$ for all $1\le i\le k$, and $\G^{(0)}\setminus
U\subset\bigcup_{1\le i\le k}\be(V_{y_i})$. By the above argument, the
intersection $\bigcap_{1\le i\le k}\be(U_{y_i})$ is a product of
elements of $\mathcal{S}\cup\mathcal{S}^{-1}$. It is an open neighborhood of $x$ contained
in $U$.

The implication (3)$\Longrightarrow$(4) is obvious. Suppose that (3) does not hold for
some points $x, y\in\G^{(0)}$. As we have seen above, for every $n\ge 1$ and
$x\in\G^{(0)}$, the set $U_n(x)$ is a product of elements of
$\mathcal{S}\cup\mathcal{S}^{-1}$. Then for every $n\ge 1$ the
intersection $U_n(x)\cap U_n(y)$ is non-empty. But $U_n(x)\cap
U_n(y)$, for $n\ge 1$, is a descending sequence of closed sets, hence
its intersection $\bigcap_{n\ge 1}U_n(x)\cap\bigcap_{n\ge 1}U_n(y)$ is
non-empty, so that (4) does not hold. This proves the implication (4)$\Longrightarrow$(3).
\end{proof}

\begin{proposition}
Let $U\subset\G^{(0)}$ be a clopen $\G$-transversal (i.e., a set intersecting every $\G$-orbit). Then the
groupoid $\G|_U$ is expansive if and only if $\G$ is expansive.
\end{proposition}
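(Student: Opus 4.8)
The plan is to reduce the equivalence to a comparison of bases of topology, and to move such bases between $\G$ and $\G|_U$ by means of a fixed finite family of bisections that collapses $\G^{(0)}$ onto $U$. I would first record that, by the definition of expansivity together with Proposition~\ref{pr:bebasis}, a groupoid is expansive precisely when some finite cover $\mathcal{S}$ of a compact open generating set has the property that $\bigcup_{n\ge1}(\mathcal{S}\cup\mathcal{S}^{-1})^n$ is a basis of its topology, and that, $U$ being a clopen transversal, $\G$ and $\G|_U$ are compactly generated simultaneously. The key auxiliary object is a finite family $A_0=\G^{(0)},A_1,\dots,A_m$ of compact open bisections with $\bigcup_i\be(A_i)=\G^{(0)}$ and $\en(A_i)\subseteq U$; such $A_i$ exist because $U$ meets every orbit and $\G^{(0)}$ is compact (take a small bisection around a $\gamma$ with $\be(\gamma)$ prescribed and $\en(\gamma)\in U$, then pass to a finite subcover). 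Using them, any $\gamma\in\G$ can be written as $b^{-1}\delta a$ with $a\in A_i$, $b\in A_j$ and $\delta=b\gamma a^{-1}\in\G|_U$, which is the bridge between the two groupoids. Throughout one uses that finite products of compact open bisections are again compact open bisections and that $\G|_U$ is clopen in $\G$; both are true because $\G^{(0)}$ is Hausdorff, which neutralises the possible non-Hausdorffness of $\G$.

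Assume $\G$ is expansive, with expansive cover $\mathcal{S}$. Since $\G|_U$ is clopen, the sets $F\cap\G|_U$, $F\in\bigcup_n(\mathcal{S}\cup\mathcal{S}^{-1})^n$, form a basis of $\G|_U$. I would let $\mathcal{T}$ be the finite set of all compact open bisections $(A_pQA_q^{-1})\cap\G|_U$ with $0\le p,q\le m$ and $Q\in\mathcal{S}\cup\mathcal{S}^{-1}\cup\{\G^{(0)}\}$; essentially the computation below shows that $\bigcup\mathcal{T}$ is a compact open generating set of $\G|_U$, so it remains to check that $\bigcup_n(\mathcal{T}\cup\mathcal{T}^{-1})^n$ is a basis, and for that it is enough to refine each basic set $F\cap\G|_U$. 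Given $\delta\in F\cap\G|_U$, write $F=F^{(1)}\cdots F^{(\ell)}$ and $\delta=\delta_1\cdots\delta_\ell$ with $\delta_k\in F^{(k)}$; the boundary intermediate units $u_0=\en(\delta)$, $u_\ell=\be(\delta)$ lie in $U$, while $u_1,\dots,u_{\ell-1}$ may not. For $0<k<\ell$ choose $a_k\in A_{j_k}$ over $u_k$ (so $\en(a_k)\in U$) and set $a_0=u_0$, $a_\ell=u_\ell$, $j_0=j_\ell=0$. Then $E_k:=(A_{j_{k-1}}F^{(k)}A_{j_k}^{-1})\cap\G|_U\in\mathcal{T}$, the element $b_k:=a_{k-1}\delta_k a_k^{-1}$ lies in $E_k$, and $b_1\cdots b_\ell=\delta$, so $\delta\in E_1\cdots E_\ell$. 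The crux is that $E_1\cdots E_\ell\subseteq F$: an element of the product is a composable string $e_1\cdots e_\ell$ with $e_k=p_kg_kq_k$, $p_k\in A_{j_{k-1}}$, $g_k\in F^{(k)}$, $q_k\in A_{j_k}^{-1}$; composability forces $\be(q_k)=\en(p_{k+1})$, and since $q_k$ and $p_{k+1}^{-1}$ both lie in the \emph{same} bisection $A_{j_k}^{-1}$ this forces $q_k=p_{k+1}^{-1}$, so the consecutive detours cancel and $e_1\cdots e_\ell=g_1\cdots g_\ell\in F$. Hence $\delta\in E_1\cdots E_\ell\subseteq F\cap\G|_U$, proving $\G|_U$ expansive.

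For the converse, assume $\G|_U$ is expansive with expansive cover $\mathcal{S}_U$. I would take $\mathcal{T}_0=\{A_1,\dots,A_m,A_1^{-1},\dots,A_m^{-1}\}\cup\mathcal{S}_U\cup\mathcal{S}_U^{-1}\cup\{\G^{(0)}\}$; the factorization $\gamma=b^{-1}\delta a$ shows $\bigcup\mathcal{T}_0$ is a compact open generating set of $\G$, and the goal is that $\bigcup_n(\mathcal{T}_0\cup\mathcal{T}_0^{-1})^n$ is a basis of $\G$. Fix $\gamma\in\G$ and an open $O\ni\gamma$, and choose $a\in A_i$ over $\be(\gamma)$, $b\in A_j$ over $\en(\gamma)$, so that $\delta:=b\gamma a^{-1}\in\G|_U$. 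On a suitable open subset of $\G$ the assignment $\eta\mapsto b(\eta)^{-1}\eta\,a(\eta)$ (with $a(\eta)\in A_i$ over $\be(\eta)$ and $b(\eta)\in A_j$ over $\en(\eta)$) is the composite of left multiplication by the bisection $A_j^{-1}$ with right multiplication by the bisection $A_i$, hence a homeomorphism onto an open set; it sends $\delta$ to $\gamma$ and sends any $V$ in its domain onto $A_j^{-1}VA_i$. By expansivity of $\G|_U$ there is $B\in\bigcup_n(\mathcal{S}_U\cup\mathcal{S}_U^{-1})^n$ with $\delta\in B$ and $B$ inside the preimage of $O$; then $A_j^{-1}BA_i$ is open, contains $\gamma$, is contained in $O$, and lies in $\bigcup_n(\mathcal{T}_0\cup\mathcal{T}_0^{-1})^n$. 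Hence $\G$ is expansive.

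The step I expect to be the genuine obstacle is the inclusion $E_1\cdots E_\ell\subseteq F$ in the first half: it is exactly the place where one must insist on using the \emph{same} bisection $A_{j_k}$ on the two sides of the $k$-th junction, so that the auxiliary detours into $U$ annihilate in consecutive pairs. The remaining points — existence of the family $A_i$, the bridge $\gamma=b^{-1}\delta a$, and the homeomorphism property of one-sided multiplication by a bisection — are routine manipulations with the source and range maps and with the inverse-semigroup calculus of bisections.
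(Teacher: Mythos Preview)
Your proof is correct and follows essentially the same approach as the paper's: both fix a finite family of bisections carrying $\G^{(0)}$ into $U$ (your $A_i$, the paper's $\mathcal{T}$), sandwich products of elements of the given expansive cover between these bisections, and rely on the telescoping identity $A^{-1}A\subset\G^{(0)}$ for a bisection $A$ to obtain the key inclusion $E_1\cdots E_\ell\subseteq F$. The paper writes this cancellation at the level of bisection products (using $T_k^{-1}T_k\subset\G^{(0)}$ directly) rather than at the level of elements, and uses $U\in\mathcal{T}$ at the endpoints where you use $A_0=\G^{(0)}$, but these are cosmetic differences.
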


\begin{proof}
Suppose that $\G$ is expansive. Let $S$ be a compact generating set of
$\G$, and let $\mathcal{S}$ be a finite expansive cover of $S$ by
bisections. For every $x\in\G^{(0)}$ there exists a bisection $F$ such
that $x\in\be(F)$ and $\en(F)\subset U$. It follows that there exists
a finite set of bisections $\mathcal{T}$ such that the sets $\be(F)$,
for $F\in\mathcal{T}$, cover $\G^{(0)}$, and $\en(F)\subset U$ for
every $F\in\mathcal{T}$. We will assume that $U\in\mathcal{T}$.

Consider the set
$\mathcal{T}\mathcal{S}\mathcal{T}^{-1}$  of all possible products
$T_1FT_0^{-1}$, $T_i\in\mathcal{T}, F\in\mathcal{S}$. Note that
$H\subset\G|_U$ for every
$H\in\mathcal{T}\mathcal{S}\mathcal{T}^{-1}$.
For every $\gamma\in\G|_U$ there is a product $F=F_n\cdots F_2F_1$ of
elements of
$\mathcal{S}\cup\mathcal{S}^{-1}$ such that $\gamma\in F$, and the set
of such products form a basis of neighborhoods of $\gamma$. Find
elements $T_i\in\mathcal{T}$, $i=1, \ldots, n-1$,
such that $\en(F_iF_{i-1}\cdots F_1\be(\gamma))\in\be(T_i)$. Then
\[\gamma\in UF_nT_{n-1}^{-1}\cdot T_{n-1}F_{n-1}T_{n-2}^{-1}\cdots
T_1F_1U^{-1}\subset F.\]
It follows that $\mathcal{T}\mathcal{S}\mathcal{T}^{-1}$ is an
expansive cover of a compact generating set of $\G|_U$.

Suppose now that $\G|_U$ is expansive, and let $\mathcal{S}$ be an
expansive cover by bisections of a compact generating set of
$\G|_U$. Let $\mathcal{T}$ be as above. Then for every $\gamma\in\G$
there exist $T_0, T_1\in\mathcal{T}$ such that
$\be(\gamma)\in\be(T_i)$ and $\en(\gamma)\in\be(T_1)$. Then $T_1\gamma
T_0^{-1}\in\G|_U$. If $F_nF_{n-1}\cdots F_2F_1$ is a product of
elements of $\mathcal{S}\cup\mathcal{S}^{-1}$ containing $T_1\gamma
T_0^{-1}$, then $\gamma\in T_1^{-1}F_1F_{n-1}\cdots
F_2F_1T_0$. Similarly to the above, this implies that the set
$\mathcal{T}^{-1}\mathcal{S}\mathcal{T}$ is an expansive cover of a
compact generating set of $\G$.
\end{proof}

\begin{proposition}
\label{pr:expansivegroups}
Let $G$ be a finitely generated group acting by homeomorphisms on a
Cantor set $\mathcal{X}$. Let $G\times\mathcal{X}$ be the groupoid
of the action, and let $\G$ be the groupoid of germs of the action.
Then the following conditions are equivalent.
\begin{enumerate}
\item The groupoid $G\times\X$ is expansive.
\item The groupoid $\G$ is expansive.
\item The action of $G$ on $\mathcal{X}$ is expansive, i.e., there
  exists a neighborhood $W\subset\mathcal{X}\times\mathcal{X}$ of the
  diagonal such that if $x, y\in\X$ are such that $(g(x), g(y))\in W$
  for all $g\in G$, then $x=y$.
\item The dynamical system $(G, \mathcal{X})$ is a subshift, i.e.,
  there exists a finite set $X$ and a $G$-equivariant homeomorphism between  $\mathcal{X}$ and a $G$-invariant closed subset of $X^G$ (with the natural action of $G$ on $X^G$).
\end{enumerate}
\end{proposition}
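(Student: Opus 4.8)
**The plan is to prove the cyclic chain of implications (1)$\Rightarrow$(2)$\Rightarrow$(3)$\Rightarrow$(4)$\Rightarrow$(1),** using the characterization of expansivity from Proposition~\ref{pr:bebasis} (specifically condition (3): separation of distinct units by source-images of products of generators) as the bridge between the groupoid conditions and the dynamical conditions. Fix a finite generating set $\{s_1,\dots,s_m\}$ of $G$. The natural generating set for $G\times\X$ is $S=\{s_i\}\times\X$ (together with $\X=\G^{(0)}$), which is compact open, and a finite cover $\mathcal{S}$ by bisections is obtained by further subdividing $\X$ into clopen pieces; the image of $\mathcal{S}$ under the quotient map $G\times\X\to\G$ plays the analogous role for $\G$. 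A product of elements of $\mathcal{S}\cup\mathcal{S}^{-1}$ over the piece indexed by a word $w=s_{i_1}^{\pm1}\cdots s_{i_k}^{\pm1}$ has source-set equal to a clopen subset of $\{x: \text{the partial actions of }w\text{ follow prescribed clopen pieces}\}$, and crucially $\be(F)$ for such $F$ ranges over exactly the clopen sets cut out by finitely many constraints of the form ``$w(x)$ lies in clopen piece $V$'' for $w\in G$.

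\textbf{For (1)$\Leftrightarrow$(2):} the quotient map $q:G\times\X\to\G$ is a continuous, open, surjective groupoid homomorphism which is the identity on $\G^{(0)}$ and sends bisections to bisections; hence for a cover $\mathcal{S}$ of a generating set of $G\times\X$, the images $q(\mathcal{S})$ cover a generating set of $\G$, and $\be\circ q=\be$. Since condition (3) of Proposition~\ref{pr:bebasis} concerns only the sets $\be(F)$ on $\G^{(0)}$, and these are unchanged under $q$, expansivity of one cover transfers to the other; by the preceding Proposition (any sufficiently fine cover of any compact generating set is then expansive) this gives (1)$\Leftrightarrow$(2). \textbf{For (2)$\Leftrightarrow$(3):} using condition (3) of Proposition~\ref{pr:bebasis} for the cover $\mathcal{S}$, two distinct units $x\ne y$ are separated by $\be(A_1),\be(A_2)$ of products of generator-bisections precisely when there is a word $w\in G$ and a clopen partition element $V$ of $\X$ with $w(x)\in V$, $w(y)\notin V$ (unwinding the product structure as above, refining to single pieces). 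Quantifying over all finite clopen partitions, this says: for all $x\ne y$ there is $g\in G$ with $g(x)\ne g(y)$ lying in different members of \emph{some} clopen partition — equivalently, taking $W$ to be a neighborhood of the diagonal refining the partition, the expansivity condition (3) of the Proposition. A standard compactness argument (the Cantor set has arbitrarily fine finite clopen covers, so one may choose a single $W$ from a fixed finite clopen partition and a Lebesgue-number argument) converts the ``for all pairs, exists a partition'' statement into the uniform ``exists $W$ for all pairs'' statement. \textbf{For (3)$\Leftrightarrow$(4):} this is the classical equivalence between expansive actions and subshifts. Given a finite clopen partition $\X=V_1\sqcup\cdots\sqcup V_r$ refining an expansivity neighborhood $W$, the map $\X\to\{1,\dots,r\}^G$ sending $x$ to $(i \text{ such that } g(x)\in V_i)_{g\in G}$ is continuous, $G$-equivariant, and injective by expansivity, hence a homeomorphism onto its (closed, $G$-invariant) image; conversely a subshift is manifestly expansive with $W=\{(x,y): x_{1_G}=y_{1_G}\}$ (the cylinder over the identity coordinate), so $X=\{1,\dots,r\}$, $W$ works.

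\textbf{The main obstacle} is the careful bookkeeping in (2)$\Leftrightarrow$(3): one must verify that $\be(F)$ for $F$ a product of generator-bisections is precisely a finite Boolean combination of ``pullback'' clopen sets $g^{-1}(V)$, $g\in G$, $V$ clopen — the forward direction requires tracking how the source-set of a composition $F_n\cdots F_1$ is computed as $\be(\gamma)$ ranging over $\gamma$ whose successive partial products stay within the chosen bisections, and the reverse direction requires exhibiting any given constraint $g(x)\in V$ as arising from such a product by choosing the bisection cover $\mathcal{S}$ fine enough. Once this dictionary is set up, the remaining compactness manoeuvres (replacing a family of partitions by a single one, Lebesgue numbers) are routine, and the equivalence with the subshift condition is standard. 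I would also remark that the hypothesis that $G$ is finitely generated is exactly what makes $S=\{s_i\}\times\X$ compact, so that ``compactly generated'' and ``expansive'' for these groupoids reduce to the finitely-generated-group dynamics.
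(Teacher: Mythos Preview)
Your approach is essentially the same as the paper's: both use Proposition~\ref{pr:bebasis} to translate groupoid expansivity into separation of units by source-sets of products of partition-based bisections $(s,U)$, then identify this with dynamical expansivity relative to the diagonal neighborhood $W$ determined by the partition, and invoke the classical expansive-action/subshift equivalence for (3)$\Leftrightarrow$(4). The paper organizes the cycle as (1)\,or\,(2)$\Rightarrow$(3)$\Rightarrow$(4)$\Rightarrow$(1)\,and\,(2), handling the two groupoids in parallel rather than via your quotient-map argument for (1)$\Leftrightarrow$(2), but the content is the same.

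One small confusion to clean up: in your (2)$\Leftrightarrow$(3) step, the ``compactness argument converting `for all pairs, exists a partition' into `exists $W$ for all pairs'\,'' is unnecessary. The expansive cover $\mathcal{S}$ is already built from a \emph{single} fixed partition $\mathcal{P}$ (by the preceding proposition, any sufficiently fine cover works), so the $V$ separating $w(x)$ from $w(y)$ is always drawn from that one $\mathcal{P}$, and $W$ is simply the set of pairs lying in a common element of $\mathcal{P}$; no Lebesgue-number manoeuvre is needed. This is exactly how the paper proceeds.
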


\begin{proof}
Let us show that (1) or (2) imply (3).
If $A$ is a finite generating set of $G$, then $S\times\mathcal{X}$
and the set of germs of elements of $S$ are compact open generating
sets of the groupoids $G\times\X$ and $\G$, respectively. Suppose that
one of the groupoids $G\times\X$ or $\G$ is expansive. Then there exists an expansive cover
$\mathcal{S}$ of the above generating sets by bisections of the form
$(s, U)=\{(s, x)\;:\;x\in U\}$, where $s\in S$ and $U$ is a clopen subset of
$\X$. Moreover, we may assume that $U$ belongs to a finite
partition $\mathcal{P}$ of $\mathcal{X}$ into disjoint clopen
subsets. Let $W\subset\mathcal{X}\times\mathcal{X}$ be the set of
pairs $(x, y)$ such that $x$ and $y$ belong to the same element of
$\mathcal{P}$.

Let $x, y\in\mathcal{X}$ and $x\ne y$. Suppose, by contradiction, that
$(g(x), g(y))\in W$ for all $g\in G$. For every element
$A=(s, U)\in\mathcal{S}$ we have $\be(A)=U\in\mathcal{P}$. It
follows that $x\in\be(A)$ is equivalent to $y\in\be(A)$. The elements
$(s(x), s(y))$ again belong to one element of $\mathcal{P}$; hence, by
induction on $n$, we get that for every
sequence $A_1, A_2, \ldots, A_n$ of elements of $\mathcal{S}$ the
condition $x\in\be(A_1A_2\cdots A_n)$ is equivalent to the condition
$y\in\be(A_1A_2\cdots A_n)$. But it is a contradiction with
Proposition~\ref{pr:bebasis}.

Implication (3)$\Longrightarrow$(4) is classical. Let us repeat the
proof for completeness. Let $W$ be the neighborhood of the diagonal
satisfying the definition of expansivity of the action of $G$ on
$\mathcal{X}$. We can always take $W$ smaller, so we may assume that
there exists a finite partition $\mathcal{P}$ of $\mathcal{X}$ into
clopen sets such that $(x, y)\in W$ if and only if $x$ and $y$ belong
to the same element of $\mathcal{P}$. Consider now the following map
$\phi:\mathcal{X}\arr\mathcal{P}^G$:
\[\phi(\xi)(g)=U\Longleftrightarrow g(\xi)\in U,\]
for $\xi\in\mathcal{X}$, $g\in G$, $U\in\mathcal{P}$.

It is easy to see that the map $\phi$ is continuous and
$G$-equivariant. Since $\mathcal{X}$ and $\mathcal{P}^G$ are compact
Hausdorff, in order to prove that $\phi$ is a homeomorphism of
$\mathcal{X}$ with a closed subset of $\mathcal{P}^G$, it is enough to
show that $\phi$ is injective. But this follows directly from the
definition of expansivity.

Finally, let us prove that (4) implies (1) and (2). Let $\mathcal{X}$ be a
$G$-invariant closed subset of $X^G$, where $X$ is a finite
alphabet. Let $A$ be a finite symmetric generating set of $G$, and let
$S=\{(s, x)\;:\;s\in A, x\in\X\}$ be the corresponding generating set
of $G\times\X$ or $\G$. Consider the following cover of $S$:
\[\mathcal{S}=\{(s, C_x)\;:\;x\in X\},\]
where $C_x$ the cylindrical set of elements $w:G\arr X$ of
$\mathcal{X}$ such that $w(1)=x$.

Then $w\in\be((s_n, C_{x_n})(s_{n-1}, C_{x_{n-1}})\cdots (s_1, C_{x_1}))$ implies
that $w(1)=x_1$, $w(s_1)=x_2$, $w(s_2s_1)=x_3$, \ldots,
$w(s_{n-1}s_{n_2}\cdots s_1)=x_n$. Consequently, by taking a long enough product
$B_1B_2\cdots B_n$, we can specify arbitrarily large number of
coordinates of $w$ by the condition $w\in\be(B_1B_2\cdots B_n)$. This
implies that $G\times\X$ and $\G$ are expansive, by Proposition~\ref{pr:bebasis}.
\end{proof}

\begin{example}
\label{ex:nonexpansive}
Consider an arbitrary group
$G$ acting by automorphisms on a locally finite rooted tree
$T$. Consider the corresponding action by homeomorphisms on the
boundary $\partial T$ of the tree. Let $\G$ be the groupoid of the
action of $G$ on $\partial T$ (or the groupoid of germs of the
action). Then for every compact open subset $S\subset\G$ and for every finite
cover $\mathcal{S}$ of $S$ there exists
a level $L_n$ of the tree $T$ and a refinement $\mathcal{S}'$ of
$\mathcal{S}$ such that every element of $\mathcal{S}'$ is of the
form $\{g\}\times \partial T_v$, where
$T_v$ is the rooted subtree of $T$ with the root $v\in L_n$. The group
$G$ acts on $L_n$ by permutations, hence any product of elements of
$\mathcal{S}'$ is also of this form. It follows that $\G$ is not expansive.
\end{example}

\begin{example}
The AF-groupoids associated with Bratteli diagrams (see
Example~\ref{ss:AFLDA}) are also
never expansive. For every compact subset $C$ of such a groupoid there
exists $n$ such that every element of $C$ is a germ of a
transformation changing beginnings of paths of length at most $n$. The
set $\G_n$ of all such germs is equal to the union of a finite
number of multisections. It follows that for any finite set
$\mathcal{S}$ of open compact bisections the set of all products of
elements of $\mathcal{S}\cup\mathcal{S}^{-1}$ is finite.
\end{example}

\subsection{Finite generation of $\alt(\G)$}

\begin{theorem}
\label{th:finitelygenerated}
Suppose that $\G$ is expansive and every $\G$-orbit has at least 5 points.
Then the group $\alt(\G)$ is finitely generated.
\end{theorem}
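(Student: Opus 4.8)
The plan is to produce a finite generating set for $\alt(\G)$ out of the combinatorial data of an expansive cover, using the three "assembly" propositions already established: Proposition~\ref{pr:covering} (a multisection covered by finitely many sub-multisections has its $\alt$ generated by theirs), Proposition~\ref{pr:unionintersection} (two overlapping multisections whose $\alt$-groups are available let us build the $\alt$ of their "union" multisection), and Corollary~\ref{cor:anydegreegen} / Proposition~\ref{pr:anydegreegen} (since all orbits have $\ge 5$ points, $\alt(\G)=\alt_5(\G)$, so it suffices to control $\alt(\F)$ for multisections $\F$ of degree exactly $5$). The candidate finite generating set will be $\bigcup \alt(\F_0)$ over a finite list of degree-$5$ multisections $\F_0$ whose components and off-diagonal bisections are built from the fixed expansive cover $\mathcal{S}$ and products of its elements of bounded length; I then have to show every $\alt(\F)$, $\F$ of degree $5$, lies in the group they generate.

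First I would fix a compact open generating set $S$ with $\be(\gamma)\ne\en(\gamma)$ for all $\gamma\in S$ (Lemma~\ref{lem:nounits}, applicable since orbits are nontrivial) and a finite expansive cover $\mathcal{S}$ of $S$ by bisections. By Proposition~\ref{pr:bebasis}, the sets $\be(F)$ for $F$ a product of elements of $\mathcal{S}\cup\mathcal{S}^{-1}$ form a basis of $\G^{(0)}$, and moreover such products of bounded length $\le n$ separate any two units whose "$U_n$-cells" are already distinct. The key quantitative step is to choose, for each of the finitely many "patterns" of how a degree-$5$ multisection can sit relative to $\mathcal{S}$, a bounded length $n$ such that: every bisection appearing in a degree-$5$ multisection can be written as a union of finitely many products of elements of $\mathcal{S}\cup\mathcal{S}^{-1}$ of length $\le n$ whose source sets are $\be$-sets of such products. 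Concretely, given a degree-$5$ multisection $\F$ and a point $x_1\in F_{1,1}$, the points $x_1,\dots,x_5$ lie in one orbit, so each $F_{1,i}$ contains an element $\gamma_i$ expressible as a word in $\mathcal{S}\cup\mathcal{S}^{-1}$; expansivity lets me shrink a neighborhood of $x_1$ so that on it the restricted multisection $\F'$ is made entirely of (unions of) products of bounded length from $\mathcal{S}$. Covering the compact component $F_{1,1}$ by finitely many such restrictions and invoking Proposition~\ref{pr:covering}, I reduce $\alt(\F)$ to the $\alt$'s of "bounded-complexity" degree-$5$ multisections.

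It then remains to see that there are only finitely many bounded-complexity degree-$5$ multisections up to the operations that Propositions~\ref{pr:covering} and~\ref{pr:unionintersection} are insensitive to — i.e., that the $\alt$'s of all of them lie in a single finitely generated subgroup. Here I would argue that a bounded-complexity multisection of degree $5$ is determined by the choice of five source cells (each a $\be$-set of a bounded-length product, of which there are finitely many) together with five connecting words of bounded length; this is a finite list $\mathcal{L}$. For a general bounded-complexity degree-$5$ multisection, its components may be unions/intersections of the cells on this list, and Proposition~\ref{pr:unionintersection} (together with the restriction operation on multisections and Lemma~\ref{lem:pipi1}) lets me build its $\alt$ from the $\alt$'s of members of $\mathcal{L}$ — the overlap hypothesis in Proposition~\ref{pr:unionintersection} is arranged by intersecting along a single common component, which is legitimate since all relevant cells come from the same finite Boolean algebra generated by bounded-length $\be$-sets. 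Hence $\alt(\G)=\alt_5(\G)$ is generated by the finite set $\bigcup_{\F_0\in\mathcal{L}}\alt(\F_0)$.

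The main obstacle I expect is the bookkeeping in the penultimate step: pinning down the right notion of "bounded complexity" so that (a) expansivity genuinely forces every degree-$5$ multisection to be locally of bounded complexity after restriction, and (b) the bounded-complexity multisections form a finite family closed (up to $\langle\cdot\rangle$) under the union and restriction operations needed to feed Propositions~\ref{pr:covering} and~\ref{pr:unionintersection}. In particular one must be careful that shrinking a component to make a multisection bounded-complexity does not force the connecting bisections to grow in length without bound; this is exactly where the equivalence of the conditions in Proposition~\ref{pr:bebasis} — that a single bounded $n$ works uniformly via the nested cells $U_n(x)$ — has to be used, rather than a pointwise expansivity estimate.
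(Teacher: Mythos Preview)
There is a genuine gap in your argument. You assert that expansivity, via Proposition~\ref{pr:bebasis}, yields a single uniform bound $n$ such that every degree-$5$ multisection can (after restriction) be built from products in $(\mathcal{S}\cup\mathcal{S}^{-1})^{\le n}$. Expansivity does not give this. What Proposition~\ref{pr:bebasis} says is that the \emph{source sets} $\be(F)$ of finite products form a basis of $\G^{(0)}$; it puts no bound on the word length needed to represent a given groupoid element. The off-diagonal bisections $F_{1,i}$ of an arbitrary multisection contain elements $\gamma_i$ that may lie arbitrarily far out in $\bigcup_k(\mathcal{S}\cup\mathcal{S}^{-1})^k$, and shrinking the domain does nothing to shorten these words. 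So your ``bounded complexity'' is bounded only for each fixed $\F$, not uniformly, and the finite list $\mathcal{L}$ you want does not exist.

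The paper closes exactly this hole with two ingredients you are missing. First, it fixes a finite clopen partition $\mathcal{P}$ of $\G^{(0)}$ with the property that every orbit meets at least five distinct pieces, and refines things so that each element of $S$ moves between different pieces of $\mathcal{P}$. The finite set $M$ of multisections is then chosen only to cover $T\subset S\cup S^2\cup S^3$. Second, instead of a uniform bound, it proves by \emph{induction on word length} (Lemma~\ref{lem:basissections}) that for any $\gamma\in G_0G_1\cdots G_n$ with $\be(\gamma),\en(\gamma)$ in different pieces of $\mathcal{P}$, one can find a degree-$5$ multisection $\mathcal{H}$ with $\gamma\in H_{0,1}\subset G_0\cdots G_n$ and $\alt(\mathcal{H})\le\alt$. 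The inductive step rewrites $\gamma=\gamma_0\gamma_1\cdots\gamma_n$ as $(\gamma_0\gamma_1(\gamma')^{-1})\cdot(\gamma'\gamma_2\cdots\gamma_n)$ for a suitable $\gamma'\in S$, so that the first factor lies in $S^3$ (hence in $T$, covered by $M$) and the second has length $n$; Proposition~\ref{pr:unionintersection} then glues, with the partition $\mathcal{P}$ guaranteeing the components can be kept in different pieces so that the overlap hypothesis is satisfied. This induction is the substitute for the uniform bound you were hoping for, and the partition $\mathcal{P}$ is what makes the overlap bookkeeping in Proposition~\ref{pr:unionintersection} go through at every step.
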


Compare this result with~\cite[Proposition~4.4]{nek:gk}. Note that we do not require $\G$ to be a groupoid of germs.

\begin{proof}
If all $\G$-orbits have at least 5 points, then $\alt(\G)=\alt_5(\G)$, see Proposition~\ref{pr:anydegreegen}. Let us prove that $\alt_5(\G)$ is finitely generated.

Let $S$ be a compact open generating set of an expansive groupoid $\G$
such that $S=S^{-1}$ and $S$ does not contain elements $\gamma$ such
that $\be(\gamma)=\en(\gamma)$, see
Lemma~\ref{lem:nounits}. We may also assume that $|Sx|\ge 4$ for every
$x\in\G^{(0)}$, after possibly increasing $S$ by adding to it bisections
with disjoint source and target.

\begin{lemma} There exists a decomposition $\mathcal{P}=\{U_i\}$ of
  $\G^{(0)}$ into a finite disjoint
union of clopen sets such that every
$\G$-orbit intersects at least $5$ elements of $\mathcal{P}$.
\end{lemma}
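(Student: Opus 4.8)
The plan is to establish a \textbf{local} version of the statement at each point and then glue by compactness. Namely, for every $x\in\G^{(0)}$ I will produce a clopen neighborhood $W_x$ of $x$ together with five pairwise disjoint clopen sets $V_1^x,\dots,V_5^x$ (with $V_1^x=W_x$), each obtained from $W_x$ by an orbit-preserving homeomorphism implemented by a bisection, so that every $\G$-orbit meeting $W_x$ meets all five of the $V_i^x$. Then a finite subcover of $\G^{(0)}$ by the $W_x$'s, together with the common refinement of the resulting sets, will give the desired partition $\mathcal{P}$.

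In detail, fix $x\in\G^{(0)}$. Since every $\G$-orbit has at least $5$ points, choose $\gamma_1=x,\gamma_2,\dots,\gamma_5\in\G$ with $\be(\gamma_i)=x$ and with $\en(\gamma_1),\dots,\en(\gamma_5)$ pairwise distinct. Pick bisections $F_i\ni\gamma_i$, where we may take $F_1$ to be a clopen subset of $\G^{(0)}$. Exactly as in the proof of Lemma~\ref{lem:smallmsections} (shrinking the source using continuity of $\en$ and clopenness of the ranges), there is a clopen neighborhood $W=W_x$ of $x$ with $W\subset\bigcap_{i=1}^5\be(F_i)$ such that the sets $V_i^x:=\en(F_iW)$, $i=1,\dots,5$, are pairwise disjoint; here $V_1^x=W_x$. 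For each $i$, $F_iW$ is a bisection from $W_x$ onto $V_i^x$ whose associated homeomorphism moves every point within its own $\G$-orbit, so every $\G$-orbit meeting $W_x$ meets each of the pairwise disjoint clopen sets $V_1^x,\dots,V_5^x$.

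By compactness of $\G^{(0)}$ choose finitely many points $x_1,\dots,x_m$ with $\G^{(0)}=\bigcup_{k=1}^mW_{x_k}$. Let $\mathcal{P}=\{U_i\}$ be the finite clopen partition of $\G^{(0)}$ into the atoms of the (finite) Boolean algebra generated by the sets $V_j^{x_k}$, $1\le j\le5$, $1\le k\le m$; then each $V_j^{x_k}$ is a union of elements of $\mathcal{P}$. Given a $\G$-orbit $\mathcal{O}$, choose $k$ with $\mathcal{O}\cap W_{x_k}\ne\emptyset$. Then $\mathcal{O}$ meets each $V_j^{x_k}$, $j=1,\dots,5$, say in a point $z_j$; the element of $\mathcal{P}$ containing $z_j$ is contained in $V_j^{x_k}$, and since $V_1^{x_k},\dots,V_5^{x_k}$ are pairwise disjoint these five elements of $\mathcal{P}$ are pairwise distinct. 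Hence $\mathcal{O}$ intersects at least $5$ elements of $\mathcal{P}$, as required.

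This argument uses only that $\G$ is \'etale with Cantor unit space and that every orbit has at least $5$ points; expansivity is not needed for this particular lemma. There is no serious obstacle: the only point requiring a little care is arranging the five copies $V_1^x,\dots,V_5^x$ to be \emph{simultaneously} pairwise disjoint, which is handled by the same shrinking as in Lemma~\ref{lem:smallmsections}, while the transition from these local data to one global partition is achieved by passing to the common refinement.
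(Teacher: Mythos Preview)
Your proof is correct and follows essentially the same strategy as the paper: both start from the identical local construction (bisections producing five pairwise disjoint clopen sets through which every nearby orbit passes) and then globalize by compactness. The only difference is in the packaging of the compactness step: the paper fixes a metric on $\G^{(0)}$, extracts a uniform $\epsilon>0$ so that every orbit contains five points pairwise at distance $>\epsilon$, and then takes any clopen partition into sets of diameter $<\epsilon$; you instead take a finite subcover by the $W_{x_k}$ and pass to the atoms of the Boolean algebra generated by all the $V_j^{x_k}$. Your version has the minor advantage of not invoking a metric, but the two arguments are otherwise the same.
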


\begin{proof}
Let $d$ be a metric on $\G^{(0)}$.
For every $x\in\G^{(0)}$ there exists $4$ elements $\gamma_1, \gamma_2, \gamma_3, \gamma_4\in\G$ such that $\be(\gamma_i)=x$, and the units
$\be(\gamma_1), \en(\gamma_1), \en(\gamma_2), \en(\gamma_3), \en(\gamma_4)$ are pairwise different. Hence, there exist
bisections $H_1, H_2, \ldots, H_4$ such that $\gamma_i\in H_i$,
$\be(H_1)=\be(H_2)=\ldots=\be(H_4)$, and the sets $\be(H_1),
\en(H_1), \ldots, \en(H_4)$ are pairwise disjoint. It follows that
there exists $\epsilon>0$ such that for every unit $y$ from the
$\epsilon$-neighborhood of $x$ the orbit of $y$ contains at least $5$
points (including $y$) that are on distance more than $\epsilon$ from
each other. It follows from the compactness of $\G^{(0)}$ 
that there exists $\epsilon>0$
such that for every $x\in\G^{(0)}$ there exist at least $5$ elements
of the orbit of $x$ that are on distance more than $\epsilon$ from
each other. Any partition $\mathcal{P}$ of $\G^{(0)}$ into clopen sets
of diameters less than $\epsilon$ will satisfy the conditions of
the lemma.
\end{proof}

Let $\mathcal{P}$ be a partition of $\G^{(0)}$ satisfying the conditions of the above lemma.
Since $S$ is compact and does not contain elements of isotropy groups,
there exists $\epsilon>0$ such that for every $\gamma\in S$ distance
between $\be(\gamma)$ and $\en(\gamma)$ is greater than
$\epsilon$. Consequently, replacing the partition $\mathcal{P}$ by a finite
refinement, we may assume that for every $\gamma\in S$ the units
$\be(\gamma)$ and $\en(\gamma)$ belong to different elements of
$\mathcal{P}$. We also may assume that for every $x\in\G^{(0)}$ there
exist elements $s_1, s_2, s_3, s_4\in S$ such that $\be(s_i)=x$ and
the points
$x, \en(s_1), \en(s_2), \en(s_3), \en(s_4)$ belong to pairwise
different elements of $\mathcal{P}$. (Recall that $|Sx|\ge 4$ for
every $x\in\G^{(0)}$, by the choice of $S$.)

Let $\mathcal{S}$ be an expansive finite cover of $S$ by
bisections such that $\mathcal{S}^{-1}=\mathcal{S}$ and
$\bigcup_{F\in\mathcal{S}}F=S$. Taking elements
of $\mathcal{S}$ small enough, we may assume that for every
$F\in\mathcal{S}$ the sets $\be(F)$ and $\en(F)$ are subsets of
different elements of $\mathcal{P}$.

Consider the set $T$ of elements $\gamma\in\bigcup_{k=1}^3S^k$ such that $\be(\gamma)$ and $\en(\gamma)$ belong to different elements of $\mathcal{P}$. Note that $S\subset T$ and $T$ is compact.

For every $\gamma\in T$ and every $H\in\bigcup_{k=1}^3\mathcal{S}^k$ such that $\gamma\in H$, we can find a degree $5$ multisection
$\F=\{F_{i, j}\}_{i, j=0}^4$ such that the sets $F_{i, i}$
are contained in pairwise different elements of $\mathcal{P}$, and
$\gamma\in F_{0, 1}\subset H$. By compactness,
we can find a finite set $M$ of multisections of this form such that the
corresponding sets $F_{0, 1}$ cover every element of $T$.

Let us denote by $\alt$ the group generated by the set
$\bigcup_{\F\in  M}\alt(\F)$. It is enough to prove that $\alt(\G)=\alt$.

\begin{lemma}
\label{lem:basissections} Suppose that $\gamma\in\G$ is such that $\be(\gamma)$ and $\en(\gamma)$ belong to different elements of $\mathcal{P}$. Then for every bisection $F\ni\gamma$ there
exists a degree $5$ multisection $\mathcal{H}=\{H_{i, j}\}$ such that $\alt(\mathcal{H})\le\alt$, $\gamma\in H_{0, 1}\subset F$, and the sets $H_{i, i}$ belong to pairwise different elements of $\mathcal{P}$.
\end{lemma}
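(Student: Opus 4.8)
The plan is to combine expansivity with the two gluing Propositions~\ref{pr:covering} and~\ref{pr:unionintersection} and to induct on the number of factors needed to express the bisections involved. First, since $\mathcal{S}$ is expansive, the products $H_n\cdots H_1$ of its elements form a basis of the topology of $\G$; pick one with $\gamma\in H_n\cdots H_1\subset F$. Because $\mathcal{S}$ respects $\mathcal{P}$ and the orbit of $\be(\gamma)$ meets at least five elements of $\mathcal{P}$, an argument as in Lemma~\ref{lem:smallmsections} yields a degree~$5$ multisection $\mathcal{H}=\{H_{i,j}\}_{i,j=0}^4$ with $\gamma\in H_{0,1}\subset F$, with the components $H_{i,i}$ in pairwise distinct elements of $\mathcal{P}$, and, after shrinking $H_{0,0}$ around $\be(\gamma)$, with each bisection $H_{i,j}$ ($i\neq j$) contained in a single product of elements of $\mathcal{S}$. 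It therefore suffices to prove, by induction on $n$, the following: \emph{if $\mathcal{K}=\{K_{i,j}\}_{i,j=0}^4$ is a degree~$5$ multisection whose components lie in pairwise distinct elements of $\mathcal{P}$ and each of whose bisections $K_{i,j}$, $i\neq j$, is contained in a product of at most $n$ elements of $\mathcal{S}$, then $\alt(\mathcal{K})\le\alt$.}

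For $n\le 3$: each $K_{i,j}$, $i\neq j$, then lies in $\bigcup_{k\le 3}S^k$ and has source and range in distinct elements of $\mathcal{P}$, hence $K_{i,j}\subset T$. Fix $x\in K_{0,0}$ and put $z_0=x$, $z_j=\en(K_{0,j}x)$; these lie in one orbit and in five distinct elements of $\mathcal{P}$, and for each $j\ge 1$ the element of $K_{0,j}$ over $x$ lies in $F^{\F_j}_{0,1}$ for some $\F_j\in M$ (using that the $F^{\F}_{0,1}$, $\F\in M$, cover $T$). On a small enough clopen neighbourhood of $x$, the relevant restrictions of $\mathcal{K}$ and of the $\F_j$ share the edge $z_0\to z_j$ (two bisections of an \'etale groupoid containing a common element agree after restriction, so no groupoid-of-germs hypothesis is needed), and, once the pieces are arranged to overlap in a single component, iterating Proposition~\ref{pr:unionintersection} produces a multisection of large degree whose alternating group lies in $\left\langle\bigcup_j\alt(\F_j)\right\rangle\le\alt$ and which contains the corresponding restriction of $\mathcal{K}$ as a sub-multisection on five of its indices, so that the latter's alternating group lies in $\alt$; covering $K_{0,0}$ by finitely many such neighbourhoods and applying Proposition~\ref{pr:covering} gives $\alt(\mathcal{K})\le\alt$.

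For $n\ge 4$: fix $x\in K_{0,0}$, factor the edge $K_{0,1}$ at $x$ into $\le n$ elements of $\mathcal{S}$, and let $x_0=x,x_1,\dots,x_n$ be the resulting trajectory. I would choose $m$ with $1\le n-m\le 3$ such that $x_m$ lies in an element of $\mathcal{P}$ distinct from those of $x_0$ and of $x_n$; if the pattern of classes along the trajectory forces every $x_m$ into these two classes (which happens precisely when it alternates between them), first lengthen the factorization by splicing in a backtrack $F^{-1}F$, $F\in\mathcal{S}$, whose range lies in a fresh element of $\mathcal{P}$ — such $F$ exists by the standing assumption that $|Sy|\ge 4$ with ranges in pairwise distinct elements of $\mathcal{P}$ — thereby forcing a trajectory point into a third class. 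Splitting $\mathcal{K}$, restricted to a small neighbourhood of $x$, at $x_m$ into a degree~$5$ multisection with edges in products of $\le 3$ elements (treated by the base case) and a degree~$5$ multisection with edges in products of $<n$ elements (treated by the induction hypothesis), chosen small and sharing one component, and gluing with Proposition~\ref{pr:unionintersection}, one gets the alternating group of that restriction of $\mathcal{K}$ inside $\alt$; compactness of $K_{0,0}$ and Proposition~\ref{pr:covering} then finish the induction. Applying the claim to $\mathcal{H}$ gives $\alt(\mathcal{H})\le\alt$.

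The main obstacle is compatibility: Proposition~\ref{pr:unionintersection} requires multisections whose domains meet in exactly one component, while the auxiliary multisections come from the fixed finite family $M$, so their auxiliary points cannot be chosen afresh and may a priori collide with one another or with the $z_j$. Overcoming this — by passing to small enough clopen neighbourhoods, observing that a collision of an auxiliary point with some $z_j$ is in fact favourable, and, where genuinely needed, refining $\mathcal{P}$ to a finite clopen partition adapted to the finitely many relevant points (or arranging $M$ to be large enough to permit such separations) — is the technical heart of the argument; it is the local-to-global mechanism already used for Theorem~\ref{th:simple}, where restrictions of the \emph{target} multisection are reassembled via Proposition~\ref{pr:covering} rather than the alternating group of any fixed multisection being restricted. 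The combinatorics of the splitting index $m$, and the backtrack that rescues it, is a distant second.
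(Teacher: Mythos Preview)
Your overall mechanism---induction on factorization length together with Propositions~\ref{pr:covering} and~\ref{pr:unionintersection}---is the same as the paper's, but your organization creates a real gap. You first fix a target multisection $\mathcal{H}$ and then try to prove the stronger claim ``every degree~$5$ multisection $\mathcal{K}$ with components in distinct pieces of $\mathcal{P}$ and all edges in $\mathcal{S}^{\le n}$ has $\alt(\mathcal{K})\le\alt$''. The inductive step you sketch only reduces the length of the single edge $K_{0,1}$: you factor it through $x_m$ and propose two degree~$5$ multisections, one ``short'' (containing $x_m,x_n$) and one ``long'' (containing $x_0,x_m$). For the glued object to contain a restriction of $\mathcal{K}$, the long one must also carry the components $z_2,z_3,z_4$ of $\mathcal{K}$---but then its edges $z_0\to z_j$ still have length up to $n$, not $<n$, so the inductive hypothesis does not apply. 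If instead the long one does not carry $z_2,z_3,z_4$, the glued multisection fails to contain $\mathcal{K}$. Either way the step does not close.

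The same over-strong formulation makes your base case harder than it need be: for $n\le 3$ you must glue four members of $M$ (one per edge $z_0\to z_j$), and the collision problem you flag is genuine. Your proposed fixes---refining $\mathcal{P}$, or enlarging $M$---are not available: both $\mathcal{P}$ and $M$ were fixed at the outset, and $M$ was chosen relative to $\mathcal{P}$.

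The paper avoids both difficulties by inducting directly on the lemma as stated (existence of \emph{some} $\mathcal{H}$ for a given $\gamma$), not on a claim about a pre-built $\mathcal{K}$. Then the base case $n\le 3$ is immediate: a single element of $M$ already \emph{is} the desired $\mathcal{H}$. For the step, rather than hunting for an existing trajectory point $x_m$ in a third $\mathcal{P}$-class (and patching with backtracks), the paper reroutes: with $\gamma=\gamma_0\gamma_1\cdots\gamma_n$, choose $\gamma'\in S$ with $\be(\gamma')=\be(\gamma_1)$ and $\en(\gamma')$ in a $\mathcal{P}$-class distinct from those of $\be(\gamma)$ and $\en(\gamma)$ (possible since $|Sy|\ge 4$), and write $\gamma=\gamma_0'\cdot(\gamma'\gamma_2\cdots\gamma_n)$ with $\gamma_0'=\gamma_0\gamma_1(\gamma')^{-1}\in S^3$. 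Induction on the length~$n{-}1$ factor gives a degree~$5$ multisection $\mathcal{H}_0$; take a degree~$3$ sub-multisection $\mathcal{H}_1\subset\mathcal{H}_0$ and a degree~$3$ sub-multisection $\mathcal{F}_1$ of an element of $M$ containing $\gamma_0'$, arranged so that their domains meet in a single component. The pigeonhole here is what degree~$5$ buys: $\mathcal{H}_0$ has three spare components (in distinct $\mathcal{P}$-classes) from which to choose the third vertex of $\mathcal{H}_1$ avoiding the class of $\en(\gamma)$, and the element of $M$ likewise has three spares from which to choose the third vertex of $\mathcal{F}_1$ avoiding the two remaining classes of $\mathcal{H}_1$. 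One application of Proposition~\ref{pr:unionintersection} then yields the desired degree~$5$ multisection. No covering-and-reassembly via Proposition~\ref{pr:covering} is needed inside this lemma; that comes later, in the final paragraphs of the theorem's proof, once the lemma is in hand.
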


\begin{proof}
It is enough to prove it for bisections $F$ of the form $G_1G_2\cdots
G_n$, where $G_i\in\mathcal{S}$. The lemma is true for $n\le 3$ by the
choice of $M$. Suppose that it is true for $n\ge 3$, let us show that it is
true for $F=G_0G_1\cdots G_n$ and $\gamma\in F$ such that $\be(\gamma)$ and
$\en(\gamma)$ belong to different elements of $\mathcal{P}$.

Denote by $\gamma_i\in G_i$ the elements such that
$\gamma=\gamma_0\gamma_1\cdots\gamma_n$. There exists $\gamma'\in S$
such that $\be(\gamma')=\be(\gamma_1)$ and the points $\en(\gamma'),
\be(\gamma), \en(\gamma)$ belong to pairwise different elements of $\mathcal{P}$.

We have $\gamma=(\gamma_0')\cdot(\gamma'\gamma_2\cdots\gamma_n)$ for
$\gamma_0'=\gamma_0\gamma_1(\gamma')^{-1}\in S^3$. Let
$G'\in\mathcal{S}$ be such that $\gamma'\in G'$, and denote
$G_0'=G_0G_1(G')^{-1}\in\mathcal{S}^3$. Then $F=G_0G_1\cdots G_n\supseteq
G_0'G'G_2\cdots G_n\ni\gamma$.

By the inductive hypothesis, there exists a degree $5$ bisection
$\mathcal{H}_0=\{H_{i, j}\}_{i, j=0}^4$, such that all the
components $H_{i, i}$  belong to pairwise different elements of $\mathcal{P}$, 
the element
$\gamma'\gamma_2\cdots\gamma_n$ belongs to $H_{0,
  1}\subset G'G_2\cdots G_n$, and
$\alt(\mathcal{H}_1)\subset\alt$. Assume that $H_{2, 2}$ and
$\en(\gamma)$ belong to different elements of $\mathcal{P}$, and let
$\mathcal{H}_1=\{H_{i, j}\}_{i, j=0}^2$ be the corresponding degree 3
multisection.

By the choice of $M$, there exists a multisection $\F_1=\{F_{i, j}\}_{i,
  j=0}^2$ which is a subset of an element of $M$, and such that
$\gamma_0'\in F_{0, 1}\subset G_0'$, and the elements of $\mathcal{P}$
containing $F_{1, 1}$ and $F_{2, 2}$ do not contain any component of
the domain of $\mathcal{H}_1$. Then, applying Proposition~\ref{pr:unionintersection} to
$\mathcal{H}_1$ and $\mathcal{F}_1$, we will find a degree $5$ multisection
$\mathcal{H}=\{T_{i, j}\}$ such that all components $T_{i, i}$ belong to
pairwise different elements of $\mathcal{P}$, and one of elements
$T_{i, j}\in\mathcal{H}$ satisfies $\gamma\in T_{i, j}\subset
G_0'G'G_2\cdots G_n\subset F$.
\end{proof}

\begin{lemma}
\label{lem:arbitraryfive}
Suppose that $\gamma\in\G$ is such that $\be(\gamma)\ne \en(\gamma)$, and $\{\be(\gamma), \en(\gamma)\}$ is contained in one element of $\mathcal{P}$. Then for every bisection $F\ni\gamma$ there exists a degree 5 multisection $\mathcal{H}=\{H_{i, j}\}_{i, j=0}^4$ such that  $\alt(\mathcal{H})\le\alt$, $\gamma\in H_{0, 1}\subset F$, and the sets $H_{0, 0}\cup H_{1, 1}, H_{2, 2}, H_{3, 3}, H_{4, 4}$ belong to pairwise different elements of $\mathcal{P}$.
\end{lemma}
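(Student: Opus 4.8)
The plan is to deduce this from Lemma~\ref{lem:basissections} by factoring $\gamma$ through a unit lying in a different element of $\mathcal{P}$. Write $x=\be(\gamma)$, $y=\en(\gamma)$, and let $U\in\mathcal{P}$ be the element containing both. Since every $\G$-orbit meets at least five elements of $\mathcal{P}$, one can choose $\gamma'\in\G$ with $\be(\gamma')=x$ and $z:=\en(\gamma')$ lying in some element $V\in\mathcal{P}$, $V\ne U$, and put $\delta:=\gamma(\gamma')^{-1}$, so that $\be(\delta)=z\in V$ and $\en(\delta)=y\in U$. Thus both $\gamma'$ and $\delta$ have source and range in different elements of $\mathcal{P}$. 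Using that $\G$ is étale, fix bisections $\Gamma'\ni\gamma'$ and $\Delta\ni\delta$ with $\be(\Delta)=\en(\Gamma')$ and $\Delta\Gamma'\subseteq F$, small enough that $\be(\Gamma')\subseteq U$, $\en(\Delta)\subseteq U$, $\en(\Gamma')\subseteq V$, and $\be(\Gamma')\cap\en(\Delta)=\emptyset$ (possible as $x\ne y$).

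Apply Lemma~\ref{lem:basissections} to $(\gamma',\Gamma')$ and to $(\delta,\Delta)$, obtaining degree $5$ multisections $\mathcal{B}=\{B_{i,j}\}$ and $\mathcal{A}=\{A_{i,j}\}$ with $\alt(\mathcal{B})\le\alt$, $\alt(\mathcal{A})\le\alt$, with $\gamma'\in B_{0,1}\subseteq\Gamma'$, $\delta\in A_{0,1}\subseteq\Delta$, and with the five components of each lying in pairwise different elements of $\mathcal{P}$. Here $B_{0,0}$ is a neighbourhood of $x$ inside $U$, $B_{1,1}$ one of $z$ inside $V$, $A_{0,0}$ one of $z$ inside $V$, and $A_{1,1}$ one of $y$ inside $U$; the remaining components $B_{2,2},B_{3,3},B_{4,4}$ occupy three distinct elements of $\mathcal{P}\setminus\{U,V\}$, and likewise $A_{2,2},A_{3,3},A_{4,4}$.

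The key step is to glue $\mathcal{A}$ and $\mathcal{B}$ along their components at $z$ via Proposition~\ref{pr:unionintersection}. To arrange its hypothesis I would pass to degree $3$ sub-multisections $\mathcal{B}_1$ of $\mathcal{B}$ on indices $\{0,1,m\}$ and $\mathcal{A}_1$ of $\mathcal{A}$ on $\{0,1,k\}$, choosing $k,m\in\{2,3,4\}$ so that $A_{k,k}$ and $B_{m,m}$ lie in different elements of $\mathcal{P}$ — possible because each of the two triples of extra components already occupies three distinct elements of $\mathcal{P}\setminus\{U,V\}$. Then the domains of $\mathcal{A}_1$ and $\mathcal{B}_1$ meet only in $A_{0,0}\cap B_{1,1}$: these two sit in $V$ and both contain $z$; the two components lying in $U$, namely $A_{1,1}$ and $B_{0,0}$, are disjoint by the choice of $\Gamma',\Delta$; and every other pair of components lies in different elements of $\mathcal{P}$. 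Since $\alt(\mathcal{A}_1)\le\alt(\mathcal{A})\le\alt$ and $\alt(\mathcal{B}_1)\le\alt(\mathcal{B})\le\alt$, relabelling so that this common component is the $(1,1)$-component of each and applying Proposition~\ref{pr:unionintersection} yields a degree $5$ multisection $\F^{*}$ with $\alt(\F^{*})\le\langle\alt(\mathcal{A}_1)\cup\alt(\mathcal{B}_1)\rangle\le\alt$, one of whose bisections is the composition $A_{0,1}B_{0,1}\ni\delta\gamma'=\gamma$, contained in $\Delta\Gamma'\subseteq F$ and joining the component near $x$ to the one near $y$. Relabelling the five components of $\F^{*}$ so that positions $0,1$ are the ones near $x$ and $y$ (both inside $U$) and positions $2,3,4$ are the components near $z$, $A_{k,k}$, and $B_{m,m}$ (lying in $V$ and in two further distinct elements, all different from $U$), the multisection $\mathcal{H}:=\F^{*}$ has all the required properties.

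I expect the main obstacle to be the same bookkeeping as in the proof of Lemma~\ref{lem:basissections}: keeping track of which elements of $\mathcal{P}$ the various components occupy and shrinking the auxiliary bisections so that the domains of the two sub-multisections overlap in exactly one pair of components, which is precisely what makes Proposition~\ref{pr:unionintersection} applicable. The remaining points — the existence of $\gamma'$, the factorisation $\gamma=\delta\gamma'$, the choice of $\Gamma'$ and $\Delta$ with $\Delta\Gamma'\subseteq F$, and the final relabelling — are routine.
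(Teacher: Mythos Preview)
Your argument is correct and follows essentially the same route as the paper: factor $\gamma$ through a unit $z$ lying in a different element of $\mathcal{P}$, apply Lemma~\ref{lem:basissections} to each factor, pass to degree~$3$ sub-multisections whose ``extra'' components sit in distinct elements of $\mathcal{P}$, and glue via Proposition~\ref{pr:unionintersection}. The paper uses the pair $(\gamma',\,\gamma'\gamma^{-1})$ where you use $(\gamma',\,\gamma(\gamma')^{-1})$, which is the same factorisation up to inversion; your treatment is slightly more explicit in arranging $\Delta\Gamma'\subseteq F$ up front so that the containment $H_{0,1}\subset F$ is immediate.
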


\begin{proof} Let $U_0$ be the element of $\mathcal{P}$ containing $\be(\gamma)$ and $\en(\gamma)$. 
Let $\gamma'\in\G$ be such that $\be(\gamma')=\be(\gamma)$, and $\gamma'\notin U_0$. Then by Lemma~\ref{lem:basissections} there exists a degree 3 multisection $\mathcal{H}'=\{H_{i, j}'\}$  such that $\alt(\mathcal{H}')\le\alt$, $\gamma'\in H_{0, 1}'$, $H_{i, i}'$ are contained in pairwise different elements of $\mathcal{P}$. Then, again by Lemma~\ref{lem:basissections}, there exists a degree  $3$ multisection  $\mathcal{H}''=\{H_{i, j}''\}$ such that $\alt(\mathcal{H}'')\le\alt$,
the bisection $H_{0, 1}''$ is an arbitrarily small neighborhood of $\gamma'\gamma^{-1}$, the sets $H_{i, i}''$ are contained in pairwise different elements of $\mathcal{P}$, and the element of $\mathcal{P}$ containing $H_{2, 2}''$ is different from the element containing $H_{2, 2}'$. Then applying Proposition~\ref{pr:unionintersection} to $\mathcal{H}'$ and $\mathcal{H}''$ we obtain a degree 5 multisection satisfying the conditions of the lemma.
\end{proof}

Let now $\gamma_1, \gamma_2\in\G$ be such that $\be(\gamma_1)=\be(\gamma_2)$, and the points $\be(\gamma_1), \en(\gamma_1), \en(\gamma_2)$ are pairwise different. Let $\mathcal{H}_1$ be a multisection satisfying the conditions of Lemma~\ref{lem:basissections} or Lemma~\ref{lem:arbitraryfive} for $\gamma=\gamma_1$.

Note that the bisection $H_{0, 1}$ from $\mathcal{H}_1$ containing $\gamma_1$ can be made arbitrarily small, hence we may assume that its domain and range do not contain $\en(\gamma_2)$. There are at most three elements of $\mathcal{P}$ intersecting $\{\be(\gamma_1), \en(\gamma_1), \en(\gamma_2)\}$. Consequently, we can find a component $H_{i, i}$ of $\mathcal{H}_1$ different from $H_{0, 0}=\be(H_{0, 1})$ and $H_{1, 1}=\en(H_{0, 1})$ and not containing $\en(\gamma_2)$. Let $\mathcal{H}_1'\subset\mathcal{H}_1$ be the degree 3 sub-multisection with the components of the domain $H_{0, 0}, H_{1, 1}, H_{i, i}$. Then the domain of $\mathcal{H}_1'$ does not contain $\en(\gamma_2)$.

Let $\mathcal{H}_2$ multisection satisfying the conditions of Lemma~\ref{lem:basissections} or~\ref{lem:arbitraryfive} for $\gamma=\gamma_2$. Since the element of $\mathcal{H}_2$ containing $\gamma_2$ can be made arbitrarily small, we may assume that the component of the domain of $\mathcal{H}_2$ containing $\en(\gamma_2)$ does not intersect the domain of $\mathcal{H}_1'$. 

One of the components of $\mathcal{H}_2$ belongs to an element of $\mathcal{P}$ not containing $\en(\gamma_2)$ and not intersecting the domain of $\mathcal{H}_1$. It follows that
a subsets of $\mathcal{H}_2$ is a degree 3 multisection
$\mathcal{H}_2'$ such that the intersection of the domains of
$\mathcal{H}_1'$ and $\mathcal{H}_2'$ is contained in one component of
$\mathcal{H}_1'$ and one component of $\mathcal{H}_2'$. Then applying
Proposition~\ref{pr:unionintersection}, we get a degree 3 multisection
$\mathcal{H}$ such that two of its elements are arbitrarily small
neighborhoods of $\gamma_1$ and $\gamma_2$ and
$\alt(\mathcal{H})\le\alt$. Consequently, for every degree 3
multisection $\mathcal{F}=\{F_{i, j}\}_{i, j=0}^2$ there exists a
finite covering $F_{i, j}=\bigcup_{k=1}^mF_{i, j}^{(k)}$ such that
$\mathcal{F}^{(k)}=\{F_{i, j}^{(k)}\}_{i, j=0}^2$ are multisections
and $\alt(\mathcal{F}^{(k)})\le\alt$. 

It follows that the same statement is also true for degree 5
multisections (by Proposition~\ref{pr:unionintersection}). 
Proposition~\ref{pr:covering} implies then that $\alt_5(\G)=\alt$.
\end{proof}

\section{Cayley graphs of groupoids and expansivity}

Proposition~\ref{pr:expansivegroups} is usually a convenient criterion
for deciding if a groupoid coming from a group action is expansive. It
is harder to use it in the cases when an \'etale groupoid is defined
without reference to a group action. Here we analyze the expansivity
condition for general \'etale groupoids.

\subsection{General case}

Let $\G$ be an \'etale groupoid. We say that a set of
bisections $\mathcal{S}$ is a \emph{generating set} of if the union of its
elements is a generating set of $\G$ (according to
Definition~\ref{def:compactgenset}). The groupoid has a finite
generating set of bisections if and only if it is compactly generated.

If $S$ is a compact generating set of $\G$, then the \emph{Cayley
  graph} $\G(x, S)$ is the graph with the set of vertices
$\be^{-1}(x)$ in which there is an arrow from $\gamma_1$ to $\gamma_2$
if and only if there is $s\in S$ such that $\gamma_2=s\gamma_1$. 

For a given finite set of bisections
$\mathcal{S}$ generating $\G$, we consider the \emph{labeled Cayley
  graph} $\G(x, \mathcal{S})$ with the same set of vertices
$\be^{-1}(x)$ in which there is an arrow from
$\gamma_1$ to $\gamma_2$ labeled by an element $F\in\mathcal{S}$ if
and only if there exists $s\in F$ such that $\gamma_2=s\gamma_1$.

We denote by $\widehat\G(x, S)$ and $\widehat\G(x, \mathcal{S})$ the universal coverings of $\G(x, S)$ and $\G(x, \mathcal{S})$, respectively. The orientation and labeling of the universal coverings is lifted from the
orientation and labeling of $\G(x, S)$ and $\G(x, \mathcal{S})$.

Note that for every vertex $v$ of the graph $\G(x, \mathcal{S})$ and
for every label $F\in\mathcal{S}$ there exist at most one outgoing
arrow labeled by $F$ and at most one incoming arrow labeled by $F$.
It follows that any morphism $\phi:\G(x,
\mathcal{S})\arr\G(y, \mathcal{S})$ of oriented labeled graphs is
uniquely determined by the pair $(x, \phi(x))$. The same is true for
the (oriented labeled) universal coverings $\widehat\G(x, \mathcal{S})$
of the Cayley graphs.

The following is proved in~\cite[Corollary~2.3.4]{nek:hyperbolic}.

\begin{proposition}
Let $\G$ be a groupoid, and let $U\subset\G^{(0)}$ be a clopen
transversal. Let $S$ and $S_1$ be compact generating sets of $\G$ and
$\G|_U$, respectively. Then for every $x\in U$ the identical embedding
of the set of vertices of $\G|_U(x, S_1)$ into the set of vertices of
$\G(x, S)$ is a quasi-isometry of graphs.
\end{proposition}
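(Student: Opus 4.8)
The plan is to verify directly that the identity embedding $\iota\colon\be^{-1}(x)\cap\G|_U\arr\be^{-1}(x)$ — from the vertex set of $\G|_U(x,S_1)$ to that of $\G(x,S)$ — is a quasi-isometry, by producing constants realizing the three defining properties: a linear upper bound for $d_{\G(x,S)}$ in terms of $d_{\G|_U(x,S_1)}$ on $\be^{-1}(x)\cap\G|_U$, a linear upper bound in the opposite direction, and coarse density of the image. (Both Cayley graphs are connected since $S,S_1$ are generating, so all distances are finite.) First I would make the routine reduction to the case $S=S^{-1}$, $S_1=S_1^{-1}$ compact \emph{open}: products of open subsets of an \'etale groupoid are open, so the powers $(S\cup S^{-1})^n$ of a compact open generating set form an increasing open exhaustion of $\G$, whence any compact generating set lies in some $(S\cup S^{-1})^N$; enlarging $S$ (and likewise $S_1$) to a compact open symmetric generating set therefore changes neither the vertex set nor the quasi-isometry class of the Cayley graph.

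The key device is a finite family $\mathcal{T}$ of compact open bisections transporting $\G^{(0)}$ into $U$, the same construction used in comparing expansivity of $\G$ and $\G|_U$: since $U$ is a transversal, each $y\in\G^{(0)}$ lies in $\be(T)$ for a small bisection $T$ with $\en(T)\subseteq U$, so by compactness of $\G^{(0)}$ finitely many such $\be(T)$ cover $\G^{(0)}$, and I adjoin the unit bisection $U$ itself to this finite list to obtain $\mathcal{T}$. The set $\mathcal{T}S\mathcal{T}^{-1}$ of all defined products $t_1 s t_0^{-1}$ with $t_0,t_1\in\bigcup\mathcal{T}$, $s\in S$, has source and range in $\en(\bigcup\mathcal{T})\subseteq U$, hence lies in $\G|_U$, and it is compact because composability is a closed condition ($\G^{(0)}$ being Hausdorff) and multiplication is continuous. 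Consequently there are $m,p,k$ with $\mathcal{T}S\mathcal{T}^{-1}\subseteq(S_1)^m$, $\bigcup\mathcal{T}\subseteq S^p$, and $S_1\subseteq S^k$.

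Now the three estimates. (Upper bound.) A path of length $\ell$ in $\G|_U(x,S_1)$ becomes one of length at most $k\ell$ in $\G(x,S)$ by expanding each $S_1$-edge into at most $k$ $S$-edges via $S_1\subseteq S^k$, so $d_{\G(x,S)}\le k\,d_{\G|_U(x,S_1)}$. (Coarse density.) Given $v\in\be^{-1}(x)$, pick $T\in\mathcal{T}$ with $\en(v)\in\be(T)$ and the unique $t\in T$ with $\be(t)=\en(v)$; then $w:=tv$ is a vertex of $\G|_U(x,S_1)$ and $t\in\bigcup\mathcal{T}\subseteq S^p$, so $d_{\G(x,S)}(v,w)\le p$. (Opposite bound — the main obstacle.) Given a path $\gamma_1=v_0,v_1,\dots,v_\ell=\gamma_2$ in $\G(x,S)$ with $\gamma_1,\gamma_2\in\G|_U$, project $v_i$ to $w_i:=t_i v_i$, where $t_i\in T_i\in\mathcal{T}$ satisfies $\be(t_i)=\en(v_i)$, taking $T_0=T_\ell=U$ so that $w_0=\gamma_1$ and $w_\ell=\gamma_2$; each $w_i$ is a vertex of $\G|_U(x,S_1)$, and $w_{i+1}w_i^{-1}=t_{i+1}(v_{i+1}v_i^{-1})t_i^{-1}\in\mathcal{T}S\mathcal{T}^{-1}\subseteq(S_1)^m$, so consecutive $w_i$ lie at $\G|_U(x,S_1)$-distance at most $m$, giving $d_{\G|_U(x,S_1)}(\gamma_1,\gamma_2)\le m\,d_{\G(x,S)}(\gamma_1,\gamma_2)$. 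Taking $\lambda=\max\{k,m\}$ and additive constant $p$ then exhibits $\iota$ as a quasi-isometry. The delicate points are all concentrated in the opposite bound: tracking which groupoid products are defined along the projected path, using $U\in\mathcal{T}$ to anchor the endpoints, and the compactness of $\mathcal{T}S\mathcal{T}^{-1}$ inside $\G|_U$ that licenses the inclusion into some $(S_1)^m$.
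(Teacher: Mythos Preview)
Your proof is correct. The paper does not actually supply a proof of this proposition; it simply cites \cite[Corollary~2.3.4]{nek:hyperbolic}. Your argument is the natural direct verification, built around the same finite family $\mathcal{T}$ of bisections transporting $\G^{(0)}$ into $U$ that the paper later uses to compare expansivity of $\G$ and $\G|_U$, and the three estimates are handled cleanly. The only step worth a second look is the compactness of $\mathcal{T}S\mathcal{T}^{-1}$ when $\G$ need not be Hausdorff, but your justification (composability is closed because $\G^{(0)}$ is Hausdorff, and multiplication is continuous) goes through as stated.
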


A \emph{rooted graph} is a graph with a marked vertex. A morphism of
rooted graphs must map the root to the root. Every Cayley graph $\G(x,
\mathcal{S})$ is considered to be an oriented labeled rooted graph
with the root $x$.

If $(\Gamma, v)$ is a rooted connected
graph, then its rooted universal covering is the usual universal covering of
$\Gamma$ in which the root is any preimage of $v$ under the covering
map. Note that it is unique up to an isomorphism of rooted graphs. The
universal covering can be defined as the space of homotopy classes of
paths in $\Gamma$ starting at $v$, where the root is the trivial path
at $v$.
If $\Gamma$ is labeled and oriented, then the universal covering is also
labeled and oriented in the natural way.

\begin{proposition}
\label{pr:morphism}
Let $\mathcal{S}$ be a finite generating set of bisections. Then
$\mathcal{S}$ is non-expansive if and only if there
exist $x, y\in\G^{(0)}$ such that $x\ne y$ and there exists
a morphisms of rooted labeled graphs $\widehat\G(x,
\mathcal{S})\arr\widehat\G(y, \mathcal{S})$.
\end{proposition}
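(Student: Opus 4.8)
The plan is to reduce both conditions in the statement to a single combinatorial property of a pair $(x,y)$ of distinct units, call it $(\star_{x,y})$: \emph{for every word $w$ in the alphabet $\mathcal{S}\cup\mathcal{S}^{-1}$, if the oriented labeled path spelled by $w$ can be traversed in the Cayley graph $\G(x,\mathcal{S})$ starting at the root, then it can also be traversed in $\G(y,\mathcal{S})$ starting at the root.} The key elementary remark is that, for a word $w=F_n^{\epsilon_n}\cdots F_1^{\epsilon_1}$ with $F_i\in\mathcal{S}$ and $\epsilon_i=\pm 1$, the condition ``$w$ is traversable from the root of $\G(z,\mathcal{S})$'' is exactly $z\in\be(F_n^{\epsilon_n}\cdots F_1^{\epsilon_1})$; and every element of $(\mathcal{S}\cup\mathcal{S}^{-1})^n$ is the product bisection named by some such word. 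Thus $(\star_{x,y})$ is equivalent to: $x\in\be(F)$ implies $y\in\be(F)$ for every product bisection $F$ of elements of $\mathcal{S}\cup\mathcal{S}^{-1}$.

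First I would show that $\mathcal{S}$ is non-expansive if and only if $(\star_{x,y})$ holds for some $x\ne y$. By the equivalence of (1) and (4) in Proposition~\ref{pr:bebasis}, non-expansivity is the failure of $\bigcap_{n\ge 1}U_n(x)=\{x\}$ for some $x$. As recorded in the proof of that proposition, a finite intersection of sets of the form $\be(F)$ with $F\in\bigcup_n(\mathcal{S}\cup\mathcal{S}^{-1})^n$ is again of that form (because $U\cap V=UV$ for subsets of $\G^{(0)}$), so each $U_n(x)$ is such a set and $\bigcap_n U_n(x)$ equals the intersection of all $\be(F)$ containing $x$, over product bisections $F$. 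Hence non-expansivity means precisely that there is $y\ne x$ lying in every $\be(F)$ that contains $x$, which by the reformulation above is exactly $(\star_{x,y})$.

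Next I would show that, for fixed $x\ne y$, a morphism of rooted labeled oriented graphs $\widehat\G(x,\mathcal{S})\arr\widehat\G(y,\mathcal{S})$ exists if and only if $(\star_{x,y})$ holds. Here I use that the vertices of $\widehat\G(z,\mathcal{S})$ are the homotopy classes of edge-paths from the root, each with a unique reduced representative; reading off labels and orientations identifies the vertex set of $\widehat\G(z,\mathcal{S})$ with the set of reduced words over $\mathcal{S}\cup\mathcal{S}^{-1}$ traversable from the root of $\G(z,\mathcal{S})$ (the root being the empty word), an $F$-labeled edge corresponding to appending the letter $F^{\pm1}$ followed by free reduction. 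If $\phi$ is such a morphism, then for any word $w$ traversable from the root of $\widehat\G(x,\mathcal{S})$ the $\phi$-image of the path spelled by $w$ is the path spelled by the same $w$ from the root of $\widehat\G(y,\mathcal{S})$ (labels, orientation and root are preserved), so $w$ is traversable from $y$; this gives $(\star_{x,y})$. Conversely, assuming $(\star_{x,y})$, every reduced word that is a vertex of $\widehat\G(x,\mathcal{S})$ is also a vertex of $\widehat\G(y,\mathcal{S})$, and the map sending each such word to itself preserves the root and sends edges to edges (checking the two cases of appending a letter: either the word lengthens, or its last letter cancels in a backtrack $F^{-1}F$), hence is the required morphism. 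Combining the two equivalences yields the proposition: $\mathcal{S}$ is non-expansive iff $(\star_{x,y})$ holds for some $x\ne y$ iff a morphism $\widehat\G(x,\mathcal{S})\arr\widehat\G(y,\mathcal{S})$ exists for some $x\ne y$.

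I expect the main obstacle to be the identification carried out in the non-expansivity step: one must see that the basic neighborhoods $U_n(x)$ from Proposition~\ref{pr:bebasis} are exactly the sets $\be(F)$ for product bisections $F$ (a family closed under finite intersections via $U\cap V=UV$), so that non-expansivity literally becomes the failure of these sets to separate some pair of points. Once this translation is in place, the morphism step is a routine consequence of the determinism of the labeled Cayley graphs already noted before the statement, the only mild care being needed for edges of $\widehat\G$ arising from a cancellation $F^{-1}F$.
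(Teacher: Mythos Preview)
Your proposal is correct and follows essentially the same route as the paper: you identify rooted paths in $\G(z,\mathcal{S})$ with words in $\mathcal{S}\cup\mathcal{S}^{-1}$, observe that traversability of a word $w$ from the root is exactly the condition $z\in\be(w)$ for the corresponding product bisection, describe $\widehat\G(z,\mathcal{S})$ as the tree of such (reduced) words, and then invoke condition~(4) of Proposition~\ref{pr:bebasis}. The paper's proof compresses all of this into three sentences, while you spell out the intermediate property $(\star_{x,y})$ and are more careful about the edge map in the universal cover (the backtracking case $F^{-1}F$); but the content is the same.
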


\begin{proof}
Every rooted path in $\G(x, \mathcal{S})$ corresponds to a product of
the elements of $\mathcal{S}$ and their inverses (one just has to read
the labels). Such a path exists if and only if $x$ belongs to the set of
sources of the corresponding product. It follows that the universal
covering is precisely the tree of all possible products $F$ of the
elements of $\mathcal{S}\cup\mathcal{S}^{-1}$ such that
$x\in\be(F)$. The proof is then finished by using condition (4) of
Proposition~\ref{pr:bebasis}.
\end{proof}

\begin{theorem}
\label{th:universalcov}
A compactly generated groupoid $\G$ is expansive if and only if there
exists a finite generating set $\mathcal{S}$ of bisections such that
for every pair of different points $x, y\in\G^{(0)}$ the universal
coverings $\widehat\G(x, \mathcal{S})$ and $\widehat\G(y, \mathcal{S})$
are non-isomorphic as rooted oriented labeled graphs.
\end{theorem}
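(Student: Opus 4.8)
The plan is to deduce the theorem from Proposition~\ref{pr:morphism} after translating both sides into statements about the clopen sets $\be(F)$, $F$ a product of elements of $\mathcal{S}\cup\mathcal{S}^{-1}$. As in the proof of Proposition~\ref{pr:morphism}, for a finite generating set $\mathcal{S}$ of bisections the rooted oriented labeled graph $\widehat\G(x,\mathcal{S})$ is canonically the tree of reduced words in $\mathcal{S}\cup\mathcal{S}^{-1}$ whose associated product $F$ satisfies $x\in\be(F)$; write $V(x)$ for this set of words. Since the labelling is deterministic (at each vertex at most one incoming and one outgoing edge per label), a morphism $\widehat\G(x,\mathcal{S})\arr\widehat\G(y,\mathcal{S})$ of rooted oriented labeled graphs must send each word-vertex of the source to the same word-vertex of the target; hence such a morphism exists if and only if $V(x)\subseteq V(y)$, in which case it is automatically injective (a locally injective morphism between trees is injective), and $\widehat\G(x,\mathcal{S})\cong\widehat\G(y,\mathcal{S})$ if and only if $V(x)=V(y)$. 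By Proposition~\ref{pr:bebasis}, $\mathcal{S}$ is expansive if and only if the sets $\be(F)$ form a basis of the topology of $\G^{(0)}$, equivalently (condition~(4)) $V(x)\subseteq V(z)$ forces $z=x$. Granting this dictionary, the implication ``expansive $\Rightarrow$ a suitable $\mathcal{S}$ exists'' is immediate: take $\mathcal{S}$ to be an expansive finite cover of a compact generating set; by Proposition~\ref{pr:morphism} there is no morphism between $\widehat\G(x,\mathcal{S})$ and $\widehat\G(y,\mathcal{S})$ for $x\ne y$, hence no isomorphism, so the universal coverings are pairwise non-isomorphic.

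For the converse it suffices to show that a finite generating set $\mathcal{S}$ of bisections (we may assume $\G^{(0)}\subseteq\bigcup\mathcal{S}$) whose universal coverings are pairwise non-isomorphic is itself expansive; by condition~(4) of Proposition~\ref{pr:bebasis} and the dictionary above this amounts to showing that $V(x)\subseteq V(z)$ implies $z=x$. Suppose instead $V(x_0)\subseteq V(z_0)$ with $x_0\ne z_0$, hence (as the coverings are pairwise non-isomorphic, i.e.\ $V$ is injective) with $V(x_0)$ properly contained in $V(z_0)$. I would then use compactness of $\G^{(0)}$ to produce two distinct points with equal $V$-sets, which is the sought contradiction. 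Ordered by inclusion, the family $\{V(x):x\in\G^{(0)}\}$ has the property that every chain is dominated by some $V(z)$: if the $V(x_i)$ form a chain, the clopen sets $\be(F)$ with $F\in\bigcup_iV(x_i)$ form a nested, hence finitely intersecting, family, so any point of their nonempty intersection serves. By Zorn's lemma there is a maximal $V(m)$, necessarily strictly containing some $V(x_0)$. One then shows that $V(m)$ is attained at more than one point, using the groupoid ``shift'': for $w\in V(m)$ the homeomorphism associated with the bisection corresponding to $w$ carries $m$ to a point whose $V$-set is the set of $w$-continuations lying in $V(m)$; combining this with the maximality of $V(m)$ and a further appeal to compactness of $\G^{(0)}$ yields $z_1\ne z_2$ with $V(z_1)=V(z_2)$, and hence $\widehat\G(z_1,\mathcal{S})\cong\widehat\G(z_2,\mathcal{S})$.

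The step I expect to be the real obstacle is this last upgrade. Non-expansiveness of $\mathcal{S}$ only supplies a \emph{morphism} between two distinct universal coverings, which may be a proper embedding rather than an isomorphism (a one-vertex covering embeds into every covering), whereas the hypothesis concerns \emph{isomorphism}; one must argue that the existence of such a proper embedding anywhere forces a genuine coincidence of coverings somewhere. Equivalently: the relation $V(x)\subseteq V(y)$ is a closed partial order on the compact space $\G^{(0)}$ as soon as the universal coverings are pairwise non-isomorphic, but comparable pairs can a priori accumulate on the diagonal, so one cannot conclude expansiveness merely by passing to a finer $\mathcal{S}$. The argument must therefore genuinely exploit the special structure of the sets $\be(F)$ — that they are clopen, cover $\G^{(0)}$, are closed under finite intersection, and are carried to sets of the same form by the homeomorphisms associated with bisections — together with compactness of $\G^{(0)}$, as in the compactness/Zorn scheme sketched above.
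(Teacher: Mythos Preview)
Your forward direction is fine and matches the paper. The gap is exactly where you locate it: the ``upgrade'' from a mere morphism $\widehat\G(x,\mathcal S)\to\widehat\G(y,\mathcal S)$ to an isomorphism. Your Zorn/shift sketch does not close it. Even granting a maximal $V(m)$, the set $\{z:V(z)=V(m)\}=\bigcap_nU_n(m)$ being a singleton only gives condition~(4) of Proposition~\ref{pr:bebasis} at the single point $m$, not at every point; and your ``shift'' relation between $V(m)$ and $V(F_w\!\cdot m)$ is not the clean word-shift you need, because $\be(F_{w'}F_w)$ is in general a \emph{proper} subset of $\be(F_{\mathrm{red}(w'w)})$, so $V(F_w\!\cdot m)$ is not read off from $V(m)$ by deleting a prefix. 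Nothing in the sketch forces two distinct points to have equal $V$-sets.

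The paper sidesteps this difficulty entirely by \emph{changing the generating set} rather than arguing about $\mathcal S$ itself. Since there are only finitely many sets $U_1(x)$, one refines $\mathcal S$ to a finite set $\mathcal S_1$ so that every $F\in\mathcal S_1$ satisfies $\be(F)\subseteq U_1(x)$ for each $x\in\be(F)$; thus the $\mathcal S_1$-label of an edge at a vertex $v$ determines the entire $\mathcal S$-star at $v$, in particular its degree. Any morphism of rooted labeled trees $\widehat\G(x,\mathcal S_1)\to\widehat\G(y,\mathcal S_1)$ is then degree-preserving, hence an isomorphism, and it induces an isomorphism $\widehat\G(x,\mathcal S)\to\widehat\G(y,\mathcal S)$. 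By hypothesis the latter forces $x=y$, so Proposition~\ref{pr:morphism} gives that $\mathcal S_1$ is expansive. In short: you try to prove ``morphism $\Rightarrow$ isomorphism'' for $\mathcal S$ via a global compactness argument; the paper instead manufactures a finer $\mathcal S_1$ for which this implication is automatic for purely local reasons, and then invokes the hypothesis on $\mathcal S$ only at the very end. Note that the theorem only asks for $\G$ to be expansive, not for $\mathcal S$ to be; passing to $\mathcal S_1$ is therefore entirely legitimate and makes the problem essentially trivial.
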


\begin{proof}
Every expansive generating set $\mathcal{S}$ satisfies
Proposition~\ref{pr:morphism}, hence has pairwise non-isomorphic
rooted graphs $\widehat\G(x, \mathcal{S})$.

Suppose that $\mathcal{S}$ is a finite generating set of bisections
such that the rooted graphs $\widehat\G(x, \mathcal{S})$ are pairwise
non-isomorphic. Let $U_n(x)$ be the sets defined in
Proposition~\ref{pr:bebasis}. The number of possible sets of the form
$U_1(x)$ is finite. Therefore, there exists a finite generating set
$\mathcal{S}_1$ such that each element of $\mathcal{S}$ is a union of
elements of $\mathcal{S}_1$, and for every
$F\in\mathcal{S}_1$ and $x\in\be(F)$ we have
$U_1(x)\subset\be(F)$. The Cayley graph $\G(x, \mathcal{S})$ is
determined by the Cayley graph $\G(x, \mathcal{S}_1)$ and is obtained
from it just by relabeling of the edges.

By the definition of $\mathcal{S}_1$,
the label of every edge starting at a vertex $v\in\G(x,
\mathcal{S}_1)$ uniquely determines the labels of the edges adjacent
to $v$ in $\G(x, \mathcal{S})$. In particular, the number of vertices
adjacent to $v$ in $\G(x, \mathcal{S}_1)$ (which is the same as in $\G(x,
\mathcal{S})$) is uniquely determined by the label of any edge
starting at $v$.

This implies that every morphism
$\widehat\G(x, \mathcal{S}_1)\arr\widehat\G(y, \mathcal{S}_1)$ is an isomorphism
that induces an isomorphism $\widehat\G(x, \mathcal{S})\arr\widehat\G(y,
\mathcal{S})$. It follows, by Proposition~\ref{pr:morphism}, that
$\mathcal{S}_1$ is expansive.
\end{proof}

\subsection{Compactly presented groupoids}
It is in many cases easier to prove that the Cayley graphs $\G(x,
\mathcal{S})$ based at different points $x$ are different, than to
prove that their universal coverings are different.

In general, it is possible that all Cayley graphs $\G(x, \mathcal{S})$
are pairwise non-isomorphic (for different $x$), but the groupoid is
not expansive. For example, there are examples of finitely generated
groups acting by automorphisms of a rooted tree such that the Cayley
graphs of the associated groupoid of germs of the action on the
boundary of the tree are pairwise non-isomorphic (see,
for instance~\cite{angelidonnomatternagnibeda}).
But such groupoids can not be expansive, see~\ref{ex:nonexpansive}.

Note, however,
that we have the following reformulation of
Proposition~\ref{pr:expansivegroups}.

\begin{proposition}
Let $G$ be a finitely generated group acting on a Cantor set $\X$. Let
$\G=G\times\X$ be the groupoid of the action. Then $\G$ is expansive
if and only if there exists a finite generating set $\mathcal{S}$ such
that for every $x, y\in\X$, $x\ne y$, the Cayley graphs $\G(x,
\mathcal{S})$ and $\G(y, \mathcal{S})$ are non-isomorphic as rooted
labeled directed graphs.
\end{proposition}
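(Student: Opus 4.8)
The plan is to reduce the statement to Proposition~\ref{pr:expansivegroups} by reading off the structure of the rooted labeled Cayley graphs of the action groupoid $\G=G\times\X$. The key observation is that for this $\G$ the vertex set $\be^{-1}(x)$ of every Cayley graph $\G(x,\mathcal{S})$ is canonically identified with $G$ via $g\mapsto(g,x)$, so the underlying unlabeled graph is always just the Cayley graph of $G$ and carries no information about $x$; all the dependence on $x$ is in the labels. Concretely, if $F=\bigcup_i\{g_i\}\times V_i$ is an element of $\mathcal{S}$, with the $g_i\in G$ and the $V_i$ disjoint clopen subsets of $\X$, then in $\G(x,\mathcal{S})$ there is an edge from $g_1$ to $g_2$ labeled $F$ exactly when $g_2=g_ig_1$ and $g_1(x)\in V_i$ for some $i$. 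Thus the rooted labeled graph $\G(x,\mathcal{S})$ records precisely, for each $g\in G$, which of the finitely many clopen sets occurring among the $V_i$ (over all $F\in\mathcal{S}\cup\mathcal{S}^{-1}$) contains $g(x)$.

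Suppose first that $\G$ is expansive. By Proposition~\ref{pr:expansivegroups} the action of $G$ on $\X$ is expansive, and, as in the proof of the implication (3)$\Longrightarrow$(4) there, there is a finite clopen partition $\mathcal{P}$ of $\X$ such that the coding map $\phi\colon\X\to\mathcal{P}^G$ is injective, where $\phi(\xi)(g)$ is the element of $\mathcal{P}$ containing $g(\xi)$. Let $A$ be a finite symmetric generating set of $G$ and put $\mathcal{S}=\{(a,U)\;:\;a\in A,\ U\in\mathcal{P}\}$; this is a finite set of bisections whose union $A\times\X$ generates $\G$. In $\G(x,\mathcal{S})$ each vertex $g$ has, for every $a\in A$, a single outgoing edge, directed to $ag$ and labeled $(a,U)$ with $U\ni g(x)$; hence one reads $\phi(x)$ directly off the rooted labeled graph. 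Since $\phi$ is injective, distinct points of $\X$ give non-isomorphic Cayley graphs, as required.

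Conversely, suppose $\mathcal{S}$ is a finite generating set of bisections of $\G$ for which the rooted labeled Cayley graphs $\G(x,\mathcal{S})$, $x\in\X$, are pairwise non-isomorphic. Let $\mathcal{Q}$ be the finite clopen partition of $\X$ generated by all clopen sets occurring as coordinate sets $V_i$ of elements of $\mathcal{S}\cup\mathcal{S}^{-1}$, and let $\phi_\mathcal{Q}\colon\X\to\mathcal{Q}^G$ be the associated coding map. If $x,y\in\X$ satisfy $\phi_\mathcal{Q}(x)=\phi_\mathcal{Q}(y)$, i.e.\ $g(x)$ and $g(y)$ belong to the same element of $\mathcal{Q}$ for all $g\in G$, then, since each such $V_i$ is a union of atoms of $\mathcal{Q}$, the condition $g(x)\in V_i$ is equivalent to $g(y)\in V_i$; by the edge description from the first paragraph, the identity map $G\to G$ is then an isomorphism of rooted labeled graphs $\G(x,\mathcal{S})\to\G(y,\mathcal{S})$. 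By hypothesis this forces $x=y$, so $\phi_\mathcal{Q}$ is injective. Taking $W=\bigcup_{Q\in\mathcal{Q}}Q\times Q$, a clopen neighborhood of the diagonal in $\X\times\X$, we conclude that $(g(x),g(y))\in W$ for all $g\in G$ implies $x=y$; hence the action of $G$ is expansive and, by Proposition~\ref{pr:expansivegroups}, so is $\G$.

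The routine points---that $A\times\X$ and $\bigcup\mathcal{S}$ are genuinely generating sets of $\G$, that $\be^{-1}(x)$ is all of $G$, and the bookkeeping in the two coding-map arguments---present no difficulty. The only step that needs care is the verification in the converse direction that the identity map is a labeled graph isomorphism: this is exactly the point where $\mathcal{Q}$ must be chosen coarse enough to be detected by the groupoid yet fine enough that every clopen set entering the definition of $\mathcal{S}$ is a union of $\mathcal{Q}$-atoms, and it is that choice which closes the argument.
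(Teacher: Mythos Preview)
Your proof is correct. The paper does not actually give a proof of this proposition, presenting it only as ``a reformulation of Proposition~\ref{pr:expansivegroups}''; your argument spells out precisely that reformulation, reducing each direction to the injectivity of a coding map $\X\to\mathcal{P}^G$ (conditions (3)--(4) of Proposition~\ref{pr:expansivegroups}), and the one point that needs a moment's thought---that a rooted labeled isomorphism $\G(x,\mathcal{S})\to\G(y,\mathcal{S})$ must in fact be the identity on $G$, so that $\phi(x)$ is genuinely an isomorphism invariant---is covered by the remark just above the proposition that such morphisms are uniquely determined by the image of the root.
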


Another class of groupoids for which it is enough to consider the
Cayley graphs to prove expansivity are \emph{compactly presented} groupoids.

For a graph $\Gamma$ and positive integer $R$, the \emph{Rips
  complex} $\Delta_R\Gamma$
is the simplicial complex with the set of vertices equal to
the set of vertices of $\Gamma$ in which a subset $A$ of the set of
vertices is a simplex if and only if its diameter is not more than
$R$.

\begin{defi}
Let $\G$ be an \'etale groupoid, and let $S$ be a compact open
generating set. The groupoid $\G$ is said to be
\emph{compactly presented} if there exists $R>0$ such that every Rips
complex $\Delta_R\G(x, S)$ is simply connected for every $x\in\G^{(0)}$.
\end{defi}

It is not hard to show that the definition does not depend on the
choice of the generating set.

\begin{theorem}
\label{th:compactlypresented}
Suppose that $\G$ is a compactly presented Hausdorff \'etale groupoid. It is
expansive if and only if there exists a finite generating set
$\mathcal{S}$ of bisections such that the graphs $\G(x,
\mathcal{S})$ and $\G(y, \mathcal{S})$ are non-isomorphic as
rooted oriented labeled graphs for every pair $x\ne y$.
\end{theorem}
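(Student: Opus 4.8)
The plan is to deduce this from Theorem~\ref{th:universalcov}, which already characterizes expansivity in terms of the universal coverings $\widehat\G(x,\mathcal{S})$. One direction is trivial: if $\G$ is expansive, then by Theorem~\ref{th:universalcov} there is a finite generating set of bisections $\mathcal{S}$ with the universal coverings $\widehat\G(x,\mathcal{S})$ pairwise non-isomorphic; since the Cayley graph $\G(x,\mathcal{S})$ is a quotient of $\widehat\G(x,\mathcal{S})$, an isomorphism of rooted labeled graphs $\G(x,\mathcal{S})\arr\G(y,\mathcal{S})$ would lift to an isomorphism of universal coverings (covering space theory, using that a rooted labeled-graph morphism is uniquely determined by the image of the root, as noted after Proposition~\ref{pr:morphism}), giving $x=y$. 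So the Cayley graphs themselves are pairwise non-isomorphic.

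For the nontrivial direction I would argue the contrapositive: assume $\G$ is \emph{not} expansive, and show that for \emph{every} finite generating set $\mathcal{S}$ of bisections there is a pair $x\ne y$ with $\G(x,\mathcal{S})\cong\G(y,\mathcal{S})$ as rooted oriented labeled graphs. By Theorem~\ref{th:universalcov} (and the refinement argument in its proof, which lets us pass to a generating set $\mathcal{S}_1$ refining $\mathcal{S}$ for which every morphism of universal coverings is an isomorphism) non-expansiveness gives, for a suitable refinement $\mathcal{S}_1$ of $\mathcal{S}$, a pair $x\ne y$ with $\widehat\G(x,\mathcal{S}_1)\cong\widehat\G(y,\mathcal{S}_1)$. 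The task is then to push this isomorphism of universal coverings down to an isomorphism of the Cayley graphs themselves. This is exactly where compact presentation enters. An isomorphism $\widehat\G(x,\mathcal{S}_1)\arr\widehat\G(y,\mathcal{S}_1)$ descends to an isomorphism of Cayley graphs precisely when it is compatible with the covering projections, i.e.\ when it carries the kernel of $\pi_1(\G(x,\mathcal{S}_1))\arr$ (deck group) isomorphically to the corresponding kernel for $y$; equivalently, when it carries closed loops to closed loops. Since $\G$ is Hausdorff and compactly presented, there is an $R$ such that every loop in $\G(x,S)$ (for the underlying unlabeled generating set $S=\bigcup\mathcal{S}_1$) is, in $\Delta_R\G(x,S)$, a product of loops of diameter $\le R$, hence a product of loops of length $\le R$ in $\G(x,S)$ itself after a bounded modification. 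Thus the fundamental group of $\G(x,S)$ is normally generated by loops of bounded length. A rooted-graph isomorphism of universal coverings automatically respects such bounded-length local data — reading off labels along a path of length $\le R$ it either sends every length-$\le R$ loop at $x$ to a length-$\le R$ loop at $y$ or it does not, and if it does then it respects all of $\pi_1$ by the normal generation, hence descends. The point to nail down is that non-expansiveness can be upgraded: if \emph{every} candidate isomorphism of universal coverings fails to respect some bounded-length loop, then in fact one can separate $x$ from $y$ after finitely many refinement steps, contradicting non-expansiveness; conversely if some isomorphism respects all bounded-length loops it descends.

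Concretely I would structure the hard direction as follows. First, using Hausdorffness, observe that for $\gamma_1\ne\gamma_2$ in $\be^{-1}(x)$ one has $\en(\gamma_1)\ne\en(\gamma_2)$ or the germs differ, so distinct vertices of the Cayley graph are genuinely separated by the range map into $\G^{(0)}$; this makes "closed loop at the root" a condition detectable by the labeled structure. Second, fix $R$ from compact presentation and refine $\mathcal{S}$ to $\mathcal{S}_1$ as in the proof of Theorem~\ref{th:universalcov}. Third, define an equivalence on $\G^{(0)}$ by $x\sim y$ iff $\G(x,\mathcal{S}_1)\cong\G(y,\mathcal{S}_1)$ as rooted labeled graphs; the hypothesis is that $\sim$ is trivial. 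I claim this forces the isomorphisms $\widehat\G(x,\mathcal{S}_1)\arr\widehat\G(y,\mathcal{S}_1)$, when they exist, to descend, because compact presentation means $\pi_1(\G(x,\mathcal{S}_1))$ is generated as a normal subgroup of the free-group-of-labels by words of length $\le R$, and a rooted labeled isomorphism of universal coverings that is also a morphism between the two Cayley graphs base-pointed rooted structures must map these generating loops to loops (their image closes up because the label-word of length $\le R$ traced from $y$ either returns or it witnesses $x\not\sim y$ — but we can always further refine to make that witness visible in the Cayley graph, which is where I expect to do the only real work). Hence $\widehat\G(x,\mathcal{S}_1)\cong\widehat\G(y,\mathcal{S}_1)$ for $x\ne y$, so $\mathcal{S}_1$ is non-expansive, and by the arbitrariness of $\mathcal{S}$ the groupoid $\G$ is non-expansive — the contrapositive we wanted. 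The main obstacle, and the step I would spend the most care on, is precisely this last point: converting "the universal coverings are isomorphic" into "the finite Cayley graphs are isomorphic" using compact presentation, i.e.\ checking that a loop of diameter $\le R$ upstairs necessarily closes up downstairs after the refinement, and that no further refinement can break the isomorphism of Cayley graphs — this is a finite-combinatorial argument but it is the crux, since everything else is formal covering-space bookkeeping plus invocations of Theorem~\ref{th:universalcov} and Proposition~\ref{pr:bebasis}.
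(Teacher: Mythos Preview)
Your easy direction is fine. For the hard direction, you have the right large-scale intuition --- use compact presentation to descend an isomorphism of universal coverings to an isomorphism of Cayley graphs --- but the argument as written has a genuine gap, and the logical structure is tangled.

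First, the structure. You announce the contrapositive (assume non-expansive, show every $\mathcal{S}$ has two isomorphic Cayley graphs), but then write ``the hypothesis is that $\sim$ is trivial''. That is the \emph{direct} hypothesis, not the contrapositive one. What you are actually trying to do is the direct argument: assume $\mathcal{S}$ has pairwise non-isomorphic Cayley graphs, refine, and show the refinement is expansive by ruling out universal-covering isomorphisms for $x\ne y$. That is exactly what the paper does; just do it directly.

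Second, and more seriously, the descent step is not justified. You want a rooted labeled isomorphism $\phi:\widehat\G(x,\mathcal{S}_1)\to\widehat\G(y,\mathcal{S}_1)$ to send lifts of short closed loops to lifts of short closed loops. Your claim that ``the label-word of length $\le R$ traced from $y$ either returns or witnesses $x\not\sim y$'' has no content without a mechanism linking labels to local Cayley-graph structure. The refinement you borrowed from the proof of Theorem~\ref{th:universalcov} (labels determine outgoing edges) is not strong enough for this. The paper instead refines so that the label of an edge determines the entire $1$-ball of its source in the original $\mathcal{S}$-Cayley graph, after first replacing $\mathcal{S}$ by a power so that $\Delta_1$ is already simply connected. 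With that refinement, a short loop at a vertex $v$ in $\G(x,\mathcal{S}_1)$ is visible from the label of any edge at $v$, hence the same label sequence closes up at the $\phi$-corresponding vertex in $\G(y,\mathcal{S}_1)$. The paper packages the descent via an intermediate quotient graph $\Gamma$ of $\widehat\G(x,\mathcal{S}_1)$ which covers both $\G(x,\mathcal{S}_1)$ and $\G(y,\mathcal{S}_1)$ at the level of Rips complexes; simple connectivity then forces both coverings to be isomorphisms.

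Finally, your use of Hausdorffness is off. The point is not about distinguishing elements of $\be^{-1}(x)$; it is that Hausdorffness makes $\G^{(0)}$ closed (as well as open), so that the sets $N_R(x,\mathcal{S})$ of units whose $R$-balls are isomorphic to that of $x$ are \emph{clopen}. This is what allows the label-encodes-$1$-ball refinement to exist as a finite set of bisections. Without it, the refinement step above cannot be carried out.
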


\begin{proof}
If $\G$ is expansive, then a generating set $\mathcal{S}$ satisfying
the conditions of the theorem exists by Theorem~\ref{th:universalcov}.

Let us prove the converse statement. Let $\mathcal{S}$ be a generating
set satisfying the conditions of the theorem.

If $\G$ is a Hausdorff \'etale groupoid, then for every $\gamma\in\G\setminus\G^{(0)}$
there exists a bisection $U$ such that $\gamma\in U$ and
$U\cap\G^{(0)}=\emptyset$. Otherwise, $\gamma$ and $\be(\gamma)$ do
not have disjoint neighborhoods. In other words, $\G^{(0)}$ is
closed. Note that it is open by definition of an \'etale groupoid.

Denote by $N_R(x, \mathcal{S})$,
for $x\in\G^{(0)}$ and $R\ge 0$, the set of points $y\in\G^{(0)}$ such
that the balls of radius $R$ with centers $x$ and $y$ in the Cayley
graphs $\G(x, \mathcal{S})$ and $\G(y, \mathcal{S})$ are isomorphic as
rooted (with roots equal to the centers $x$ and $y$) labeled oriented
graphs. By the above, the sets $N_R(x, \mathcal{S})$ are clopen.

Replacing $\mathcal{S}$ by $(\mathcal{S}\cup\mathcal{S}^{-1})^n$ for
some $n$, if necessary, we may assume that the Rips complexes
$\Delta_1\G(x, \mathcal{S})$ are simply connected. We may also assume
that the sets $\be(F)$, for $F\in\mathcal{S}$, cover $\G^{(0)}$, so
that every vertex of every Cayley graph has at least one outgoing
edge.

Let $\mathcal{S}_1$
be a finite set of bisections such that every element of $\mathcal{S}$ is a
disjoint union of elements of $\mathcal{S}_1$, and for every
$F\in\mathcal{S}_1$ and $x\in\be(F)$ we have $\be(F)\subset N_1(x,
\mathcal{S})$. Then the Cayley graphs $\G(x, \mathcal{S})$ are
obtained from $\G(x, \mathcal{S}_1)$ by relabeling edges, and the
label of any edge $e$ of $\G(x, \mathcal{S}_1)$ uniquely determines the
1-ball in $\G(x, \mathcal{S})$ of the source of $e$. It also follows that if
$x\ne y$, then the rooted graphs $\G(x, \mathcal{S}_1)$ and $\G(y,
\mathcal{S}_1)$ are non-isomorphic.

Suppose that $\phi:\widehat\G(x, \mathcal{S}_1)\arr\widehat\G(y, \mathcal{S}_1)$ is a
morphism of rooted graphs. Note that by the same argument as in the
proof of Theorem~\ref{th:universalcov}, $\phi$ is an isomorphism.

The morphism $\phi$ induces a covering map $\phi':\widehat\G(x,
\mathcal{S}_1)\arr\G(y, \mathcal{S}_1)$, obtained by composing $\phi$
with the universal covering map $\widehat\G(y, \mathcal{S}_1)\arr\G(y,
\mathcal{S}_1)$.

Let us introduce an equivalence relation on the sets of vertices and
edges of $\widehat\G(x, \mathcal{S}_1)$ by the rule that $a_1\sim a_2$
if and only if $\phi'(a_1)=\phi'(a_2)$ and the images of $a_1$ and
$a_2$ in $\G(x, \mathcal{S}_1)$ coincide. Let $\Gamma$ be the quotient
of $\tilde\G(x, \mathcal{S}_1)$ by this equivalence relation. We have
natural maps $\psi:\Gamma\arr\G(y, \mathcal{S}_1)$ and
$\pi:\Gamma\arr\G(x, \mathcal{S}_1)$ induced by $\phi'$ and the
universal covering map, respectively.

Let $\Sigma=\{v_1, v_2, v_3\}$ be a face of
$\Delta_1\G(x, \mathcal{S}_1)$. Let $e$ be an edge in $\Sigma$, and
let $v_1=\be(e)$ and $v_2=\en(e)$. Then the label of $e$ in $\G(x,
\mathcal{S}_1)$ uniquely determines the labels and orientation of the
edges of $\Sigma$ in $\G(x, \mathcal{S})$. Moreover, any edge $f$ of
$\G(x, \mathcal{S}_1)$ (or $\G(y, \mathcal{S}_1)$) with the same label
as $e$ belongs in $\G(x, \mathcal{S})$ (resp.\ $\G(y, \mathcal{S})$)
to a face with the same labels of sides as in $\Sigma$. It follows
that the maps $\psi:\Gamma\arr\G(y, \mathcal{S}_1)$ and
$\pi:\Gamma\arr\G(x, \mathcal{S}_1)$ induce coverings of the
respective Rips complexes. But $\Delta_1\G(x, \mathcal{S}_1)$ and
$\Delta_1\G(y, \mathcal{S}_1)$ are simply connected, hence $\psi$ and
$\pi$ are isomorphisms.
\end{proof}

\subsection{Quasicrystals}

Consider the following class of groupoids,
as an example of an application of Theorem~\ref{th:compactlypresented}.

\begin{defi}
\label{def:quasicrystal}
We say that a subset $Q$ of the vector space $\R^n$ is  a
\emph{quasicrystal} if it satisfies the following conditions.
\begin{enumerate}
\item It is a \emph{Delaunay set}:
\begin{itemize}
\item there exists $R>0$ such that for every $x\in\R^n$ there exists
  $q\in Q$ such that $\|x-q\|<R$;
\item there exists $\delta>0$ such that for any $q_1, q_2\in Q$, $q_1\ne q_2$,
  we have $\|q_1-q_2\|>\delta$.
\end{itemize}
\item It has \emph{finite local complexity}:
for every $R>0$ the set $\mathcal{B}_R(Q)$ of sets of the form
  $B_R(q)\cap Q-q$ for $q\in Q$ is finite.
\item It is \emph{repetitive}: for every $R>0$ and $q\in Q$ there exists $D>0$ such that for
  every $x\in\R^n$ there exists $p\in Q$ such that $\|x-p\|<D$ and
  $B_R(p)\cap Q-p=B_R(q)\cap Q-q$.
\end{enumerate}
\end{defi}

Here $B_R(x)=\{y\in\R^n\;:\;\|x-y\|<R\}$
is the open ball of radius $R$ with center in $x$.

We say that a quasicrystal $Q\subset\R^n$ is \emph{aperiodic} if
$Q+v=Q$ for $v\in\R^n$ implies $v=\vec 0$.

Many examples of quasicrystals can be constructed using the
\emph{cut-and-project} method, see~\cite{bruijn:pen1}.

\begin{defi}
The \emph{hull}, see~\cite{bellisardherrmannzarrouati:hulls},
$\mathcal{Q}$ of a quasicrystal $Q$ is the set of all
subsets $Q'\subset\R^n$ such that for any $R>0$ there exists $q\in Q$
such that $B_R(\vec 0)\cap Q'=B_R(q)\cap Q-q$.
\end{defi}

One can show that every element of the hull is a quasicrystal.
Note that the zero vector $\vec 0$ belongs to every element of the hull.

Let us introduce a metric on $\mathcal{Q}$ setting $d(Q_1, Q_2)$ equal
to $R^{-1}$, where $R$ is supremum of radii $r$ such that $B_r(\vec
0)\cap Q_1=B_r(\vec 0)\cap Q_2$. It is easy to see that $\mathcal{Q}$
is compact and totally disconnected.

If $Q$ is aperiodic, then the space $\mathcal{Q}$ has no isolated
points, and hence is homeomorphic to the Cantor set.

Groupoids and inverse semigroups are appropriate tools for the study
of symmetries of quasyperiodic structures,
see~\cite{bellissardjuliensavinien,kellendonk_putnam,kellendonk_lawson}.
Let us describe a natural \'etale groupoid associated with a quasicrystal.
Fix an aperiodic quasicrystal $Q$.
Denote by $\mathfrak{Q}$ the following groupoid. Its elements are
pairs $(P, q)$, where $P\in\mathcal{Q}$ and $q\in P$. They are
multiplied by the rule
\[(P_1, q_1)(P_2, q_2)=(P_2, q_1+q_2),\]
where the product is defined if and only if $P_2-q_2=P_1$. The set of
units is equal to the set of elements of the form $(P, \vec 0)$, hence it
is naturally identified with the hull $\mathcal{Q}$. We have then $\be(P, q)=P$
and $\en(P, q)=P-q$. In other
words, the element $(P, q)$ of $\mathfrak{Q}$ corresponds to the
change of the origin in $P$ from $\vec 0$ to $q$. For $P\in\mathcal{Q}$, the $\mathfrak{Q}$-orbit of $P$ is the set of quasicrystals of the
form $P-p$ for $p\in P$.

The natural topology on $\mathfrak{Q}$ is defined in the same way as
for $\mathcal{Q}$. The distance from $(P_1, q_1)$ to $(P_2, q_2)$ is
equal to 1 if $q_1\ne q_2$, and to $R^{-1}$, where $R$ is the supremum
of radii $r$ such that $B_r(\vec 0)\cap P_1= B_r(\vec 0)\cap P_2$, if
$q_1=q_2$. It is easy to show that $\mathfrak{Q}$ is \'etale,
Hausdorff, principal, and minimal.

Denote by $\mathcal{B}_R$ the set of sets of the form $B_R(q)\cap Q-q$.

\begin{defi}
 We say that a permutation $\alpha:Q\arr Q$
is \emph{defined by local rules} if there exist $R>0$ and a map
$W:\mathcal{B}_R\arr\R^n$ such that for every $q\in Q$ we have
\[\alpha(q)-q=W((B_R(q)\cap Q)-q).\]
In other words, if $\alpha(q)-q$ is uniquely determined by the
$R$-neighborhood of $q$ in $Q$.
\end{defi}

The following is straightforward.

\begin{proposition}
The group $\full Q$ is isomorphic to the topological full
group $\full{\mathfrak{Q}}$. Moreover, the action of $\full Q$
coincides with the action of $\full{\mathfrak{Q}}$ on the
$\mathfrak{Q}$-orbit of $Q\in\mathcal{Q}$.
\end{proposition}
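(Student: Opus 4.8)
The plan is to write down an explicit isomorphism $\full Q\arr\full{\mathfrak Q}$, $\alpha\mapsto F_\alpha$, by ``reading off the local rules'', and to check that under the identification of the $\mathfrak Q$-orbit of $Q$ with $Q$ it intertwines the two actions. Here $\full Q$ is the group of permutations of $Q$ defined by local rules; such a permutation automatically has bounded displacement (its rule $W:\mathcal B_R\arr\R^n$ has finite image, by finite local complexity), and then $\alpha^{-1}$ is again defined by local rules with a larger radius, so $\full Q$ is indeed a group. First I would record the facts about the hull that make everything work: since $Q$ is repetitive and has finite local complexity, every $P\in\mathcal Q$ is a quasicrystal with the same family of $R$-patterns as $Q$, i.e.\ $\{B_R(p)\cap P-p:p\in P\}=\mathcal B_R$ for every $R>0$; the $\mathfrak Q$-orbit $O=\{Q-p:p\in Q\}$ is dense in $\mathcal Q$ by minimality, and $p\mapsto Q-p$ is a bijection $Q\arr O$ since aperiodicity makes the isotropy of $Q$ trivial; the sets $\{(P,q):B_r(\vec 0)\cap P=\mathbf p,\ q=q_0\}$, for $r>0$, $\mathbf p\in\mathcal B_r$ and $q_0\in\mathbf p$, form a basis of $\mathfrak Q$ by clopen compact bisections; and $\mathfrak Q$ is principal, so $F\mapsto\phi_F$ embeds $\full{\mathfrak Q}$ into $\mathrm{Homeo}(\mathcal Q)$, with $F$ determined by $\phi_F|_O$ (agreement of continuous maps on the dense set $O$), and every $F\in\full{\mathfrak Q}$ preserves $O$ because $O$ is a $\mathfrak Q$-orbit.

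Next I would build the map. Let $\alpha\in\full Q$ be defined by local rules with data $(R,W)$. Enlarging $R$ I may assume that one radius serves both $\alpha$ and $\alpha^{-1}$ and that $\|W(\mathbf p)\|<R$ for all $\mathbf p$, so that $W(\mathbf p)\in\mathbf p$; then set
\[F_\alpha=\bigsqcup_{\mathbf p\in\mathcal B_R}\{(P,W(\mathbf p)):P\in\mathcal Q,\ B_R(\vec 0)\cap P=\mathbf p\}.\]
This is a finite disjoint union of basic compact open bisections, $\be$ maps it bijectively onto $\bigsqcup_{\mathbf p}\{P:B_R(\vec 0)\cap P=\mathbf p\}=\mathcal Q$, and a direct computation shows the associated partial homeomorphism $\phi_{F_\alpha}$, restricted to $O$, is $Q-q\mapsto Q-\alpha(q)$, i.e.\ corresponds to $\alpha$ under $p\leftrightarrow Q-p$. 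Performing the same construction for $\alpha^{-1}$ gives $F_{\alpha^{-1}}$ with $\be(F_{\alpha^{-1}})=\mathcal Q$ and $\phi_{F_{\alpha^{-1}}}|_O$ corresponding to $\alpha^{-1}$. Now $\phi_{F_{\alpha^{-1}}}\circ\phi_{F_\alpha}$ is a continuous self-map of the compact space $\mathcal Q$ restricting to the identity on the dense set $O$, hence equals the identity; by symmetry so does the other composite. Therefore $\phi_{F_\alpha}$ is a homeomorphism of $\mathcal Q$, so $\en(F_\alpha)=\mathcal Q$ and $F_\alpha\in\full{\mathfrak Q}$. This trick disposes of the full-range requirement without checking bijectivity of $\phi_{F_\alpha}$ by hand.

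Conversely, given $F\in\full{\mathfrak Q}$, covering $F$ by basic clopen bisections, extracting a finite subcover and refining, one gets $F=\bigsqcup_{\mathbf p\in\mathcal B_R}\{(P,v(\mathbf p)):B_R(\vec 0)\cap P=\mathbf p\}$ for a suitable $R$ and a map $v:\mathcal B_R\arr\R^n$ with $v(\mathbf p)\in\mathbf p$; then $F$ preserves $O$ and transports, via $p\leftrightarrow Q-p$, to the permutation $\alpha_F(q)=q+v(B_R(q)\cap Q-q)$ of $Q$, which is defined by local rules. The assignments $\alpha\mapsto F_\alpha$ and $F\mapsto\alpha_F$ are mutually inverse: $\alpha_{F_\alpha}=\alpha$ is immediate from the computation above, while $F_{\alpha_F}=F$ because the two bisections induce the same homeomorphism of $\mathcal Q$ (they agree on the dense set $O$) and $\full{\mathfrak Q}$ embeds into $\mathrm{Homeo}(\mathcal Q)$. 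That these bijections are group homomorphisms follows because $F\mapsto\phi_F$ is one and restriction to the invariant dense orbit $O$ is injective and multiplicative; and the ``moreover'' part is exactly the statement, verified along the way, that under $p\leftrightarrow Q-p$ the homeomorphism $\phi_{F_\alpha}$ of $\mathcal Q$ restricted to $O$ is the action of $\alpha$ on $Q$.

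I expect the only genuinely fiddly step to be the uniform-radius bookkeeping: on one side, that every element of $\full{\mathfrak Q}$ has the displayed normal form for a single radius $R$ (finite subcover, then take $R$ larger than all radii and displacement vectors involved); on the other, that a local rule can be chosen with $\|W(\mathbf p)\|<R$ and with one $R$ valid for $\alpha$ and $\alpha^{-1}$ simultaneously (again the finite image of $W$ and boundedness of displacement). Everything else --- that $F_\alpha$ is a bisection, has full source, and is multiplicative --- is formal once the pattern structure of the hull is in hand, and the density-and-continuity argument of the second paragraph handles full range with no further computation.
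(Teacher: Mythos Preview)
Your argument is correct and is exactly the natural explicit construction one would expect; the paper itself omits the proof entirely, declaring the proposition ``straightforward''. Your write-up therefore supplies the details the paper leaves to the reader, and the only places requiring care---the uniform-radius normal form for elements of $\full{\mathfrak Q}$ and the use of principality plus density of the orbit of $Q$ to recover $F$ from $\phi_F|_O$---are handled correctly.
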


Let us denote by $\alt(Q)$ the group $\alt(\mathfrak{Q})$. It can be
defined in terms of $Q$ as the subgroup of $\full Q$ generated by
permutations $\alpha$ such that the orbits of $\langle\alpha\rangle$ are
of lengths 1 and 3 only.

\begin{theorem}
\label{th:quasicrystals}
Let $Q$ be a quasicrystal. Then $\alt(Q)$ is a simple finitely
generated group.
\end{theorem}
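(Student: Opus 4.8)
The plan is to show that the groupoid $\mathfrak{Q}$ is a minimal groupoid of germs with infinite orbits and that it is expansive; simplicity of $\alt(Q)=\alt(\mathfrak{Q})$ then follows from Theorem~\ref{th:simple}, and finite generation from Theorem~\ref{th:finitelygenerated}.

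First I would dispose of the ``qualitative'' properties. As remarked just above, $\mathfrak{Q}$ is \'etale, Hausdorff, principal and minimal. Being principal it is in particular essentially principal, so by Proposition~\ref{pr:essprincipalHaus} it is a groupoid of germs. Its orbits are infinite: the orbit of $P\in\mathcal{Q}$ is $\{P-p:p\in P\}$, and every element of the hull is a Delaunay set in $\R^n$, hence infinite. Theorem~\ref{th:simple} then gives that $\alt(\mathfrak{Q})$ is simple (and nontrivial, since degree-$3$ multisections with arbitrarily small domain exist by Lemma~\ref{lem:smallmsections}).

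Next I would establish expansivity by applying Theorem~\ref{th:compactlypresented}, which requires (i) a finite generating set of bisections for which the rooted oriented labeled Cayley graphs based at different units are pairwise non-isomorphic, and (ii) compact presentation of $\mathfrak{Q}$. For (i), fix $R$ large enough in terms of the Delaunay constants of $Q$, let $\mathcal{B}_R$ be the (finite, by finite local complexity) set of $R$-patterns of $Q$, and for each $\tau\in\mathcal{B}_R$ and each nonzero $v\in\tau$ put
\[
F_{\tau,v}=\{(P,v)\ :\ P\in\mathcal{Q},\ B_R(\vec 0)\cap P=\tau\}.
\]
Each $F_{\tau,v}$ is a compact open bisection with $\be(F_{\tau,v})=\{P\in\mathcal{Q}:B_R(\vec 0)\cap P=\tau\}$, and there are finitely many of them. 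Their union generates $\mathfrak{Q}$: given $(P,q)$, choose a chain $\vec 0=p_0,p_1,\dots,p_k=q$ of points of $P$ with $\|p_i-p_{i-1}\|<R$ (possible because $P$ is relatively dense, provided $R$ was chosen large enough), and write $(P,q)=(P-p_{k-1},p_k-p_{k-1})\cdots(P-p_0,p_1-p_0)$. In the Cayley graph $\mathfrak{Q}(P,\mathcal{S})$ identify the vertex $(P,q)$ with $q\in P$; then the label of any edge issuing from $q$ is of the form $(\tau,v)$ with $\tau=B_R(q)\cap P-q$, so reading the labels at $q$ recovers the whole $R$-pattern of $P$ at $q$. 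Propagating this along the graph (connected at scale $R$) and using that the root is $\vec 0\in P$, one reconstructs the set $P\subset\R^n$ itself from the rooted labeled graph $(\mathfrak{Q}(P,\mathcal{S}),\vec 0)$; hence $P_1\ne P_2$ forces the rooted labeled Cayley graphs to be non-isomorphic. For (ii), the identity $q\mapsto q$ is a quasi-isometry from $\mathfrak{Q}(P,S)$ onto $\R^n$ with constants depending only on the Delaunay constants of $Q$ (hence uniform over the hull); since $\R^n$ is coarsely simply connected, the Rips complexes $\Delta_R\mathfrak{Q}(P,S)$ are simply connected for one uniform $R$, i.e.\ $\mathfrak{Q}$ is compactly presented.

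By Theorem~\ref{th:compactlypresented}, $\mathfrak{Q}$ is then expansive, and since its orbits are infinite (in particular of size $\ge 5$), Theorem~\ref{th:finitelygenerated} gives that $\alt(\mathfrak{Q})$ is finitely generated. Combined with simplicity, this shows $\alt(Q)$ is a simple finitely generated group. The step needing the most care is (i): choosing the bisections $F_{\tau,v}$ so that their labels carry enough local information to reconstruct each $P$, and verifying that they genuinely generate $\mathfrak{Q}$; the compact-presentation input (ii) is a routine instance of the quasi-isometry invariance of coarse simple connectivity.
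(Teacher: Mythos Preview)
Your proposal is correct and follows essentially the same route as the paper: verify that $\mathfrak{Q}$ is a minimal groupoid of germs with infinite orbits, then prove expansivity via Theorem~\ref{th:compactlypresented} using a finite generating set of bisections coming from short translations and the fact that all Cayley graphs are uniformly quasi-isometric to $\R^n$ (hence the Rips complexes are simply connected). The only cosmetic difference is that the paper uses the coarser bisections $T_v=\{(P,v):P\in\mathcal{Q}\}$ rather than your pattern-refined $F_{\tau,v}$; since at each vertex the set of outgoing $T_v$-labels already recovers the $R$-pattern $\tau$, the two labelings carry the same information and your reconstruction argument goes through equally well for both.
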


\begin{proof}
For $P\in\mathcal{Q}$, denote by $\Delta_RP$ the Rips complex of $P$,
i.e., the simplicial complex with the set of vertices $P$ in which a
set is a simplex if and only if its diameter is less than or equal to
$R$. It follows from the definition of a quasicrystal that there
exists $R$ such that $\Delta_RP$ is quasi-isometric to $\R^n$ and
simply connected for every $P\in\mathcal{Q}$.

Denote by $T_v$, for $v\in\R^n$, the set of elements
of $\mathfrak{Q}$ of the form $(P, v)$ for $P\in\mathcal{Q}$. The set
$T_v$ is a (possibly empty) $\mathfrak{Q}$-bisection. Let
$\mathcal{S}$ be the set of all non-empty bisections $T_v$ for
$\|v\|\le R$. It is finite and satisfies the conditions of
Theorem~\ref{th:compactlypresented}, since the
complexes $\Delta_RP$ are simply connected for all $P\in\mathcal{Q}$
\end{proof}

\section{The quotient $\symm(\G)/\alt(\G)$}

The following proposition follows directly from the definition of the
homology groups $H_0(\G, A)$.

\begin{proposition}
The group $H_0(\G, \Z/2\Z)$ is isomorphic to the abelian group given by the following
presentation.
\begin{itemize}
\item Its generators $1_U$ are labeled by clopen subsets
$U\subset\G^{(0)}$.
\item For every clopen set $U$ we have $1_U+1_U=0$.
\item For every decomposition $U=U_1\sqcup
U_2\sqcup\cdots\sqcup U_n$ of a clopen set into a disjoint union of clopen
subsets, we have  $1_U=1_{U_1}+1_{U_2}+\cdots+1_{U_n}$.
\item For
every bisection $F\subset\G$, we have $1_{\be(F)}=1_{\en(F)}$.
\end{itemize}
\end{proposition}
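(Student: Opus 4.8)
The plan is to unwind the definition $H_0(\G,\Z/2\Z)=\mathrm{Ker}\,\delta_0/\mathrm{Im}\,\delta_1$ and match the two sides of the asserted isomorphism term by term. First I would observe that $\delta_0$ is the zero map (its codomain is $0$), so $\mathrm{Ker}\,\delta_0=C_c(\G^{(0)},\Z/2\Z)$ and hence $H_0(\G,\Z/2\Z)=C_c(\G^{(0)},\Z/2\Z)/\mathrm{Im}\,\delta_1$. It then suffices to establish two facts: (a) the group $C_c(\G^{(0)},\Z/2\Z)$ is presented by the generators $1_U$ ($U\subset\G^{(0)}$ clopen) together with the relations $1_U+1_U=0$ and $1_U=1_{U_1}+\cdots+1_{U_n}$ for disjoint decompositions; and (b) $\mathrm{Im}\,\delta_1$ is exactly the subgroup generated by the elements $1_{\be(F)}+1_{\en(F)}$, $F$ a bisection, so that passing to the quotient adjoins precisely the relation $1_{\be(F)}=1_{\en(F)}$ (the sign being irrelevant over $\Z/2\Z$). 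Granting (a) and (b), the presented group equals $\bigl(\langle 1_U\rangle/(\text{the first two families of relations})\bigr)/\mathrm{Im}\,\delta_1\cong C_c(\G^{(0)},\Z/2\Z)/\mathrm{Im}\,\delta_1=H_0(\G,\Z/2\Z)$, which is the claim.

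For (a): since $\G^{(0)}$ is a Cantor set, its compact open subsets are exactly its clopen subsets, so $C_c(\G^{(0)},\Z/2\Z)$ is generated by the indicators $1_U$ of clopen $U$; being $\G^{(0)}$ Hausdorff, this group coincides with the group of clopen subsets of $\G^{(0)}$ under symmetric difference. Let $P$ denote the abelian group on generators $1_U$ subject to the two stated families of relations, and let $\pi\colon P\arr C_c(\G^{(0)},\Z/2\Z)$ be the evident surjection. To see that $\pi$ is injective I would check that in $P$ one has $1_\emptyset=0$ and $1_U+1_V=1_{U\triangle V}$ — write $U=(U\setminus V)\sqcup(U\cap V)$ and $V=(V\setminus U)\sqcup(U\cap V)$ and use $2\cdot 1_{U\cap V}=0$. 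Thus every element of $P$ has the form $1_W$ for a single clopen $W$, and $\pi(1_W)=0$ forces $W=\emptyset$, i.e.\ $1_W=0$ in $P$ already.

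For (b): since $\G$ is étale, $C_c(\G,\Z/2\Z)=C_c(\G^{(1)},\Z/2\Z)$ is generated by the indicators $1_F$ of compact open bisections $F$, so $\mathrm{Im}\,\delta_1$ is generated by the elements $\delta_1(1_F)$ with $F$ a bisection. Recalling that for $n=1$ one has $d_0=\be$ and $d_1=\en$, and that $\be$ and $\en$ are injective on a bisection $F$, I compute $d_{0*}(1_F)(x)=\#\{\gamma\in F:\be(\gamma)=x\}\bmod 2=1_{\be(F)}(x)$ and likewise $d_{1*}(1_F)=1_{\en(F)}$; since $-1=1$ over $\Z/2\Z$, $\delta_1(1_F)=d_{0*}(1_F)+d_{1*}(1_F)=1_{\be(F)}+1_{\en(F)}$. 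This proves (b), and the proposition follows.

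The argument is routine; the only point that calls for a little care is the reduction in (b) of a generating set of $C_c(\G,\Z/2\Z)$ from arbitrary compact open sets to compact open bisections (so that $\mathrm{Im}\,\delta_1$ really is generated over the bisections), together with keeping track that over $\Z/2\Z$ the homology differential $\delta_1$ is literally $d_{0*}+d_{1*}$.
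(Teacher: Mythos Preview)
Your proof is correct. The paper does not actually prove this proposition; it merely remarks that it ``follows directly from the definition of the homology groups $H_0(\G, A)$'' and moves on. Your argument supplies precisely the routine verification the paper omits, and the approach---identifying $C_c(\G^{(0)},\Z/2\Z)$ with the group presented by the first two families of relations, then identifying $\mathrm{Im}\,\delta_1$ with the subgroup generated by $1_{\be(F)}+1_{\en(F)}$---is the natural one.
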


Denote $\tau_F=F\cup F^{-1}\cup
(\G^{(0)}\setminus(\be(F)\cup\en(F))$, where $F$ is a bisection such
that $\be(F)\cap\en(F)=\emptyset$. 

\begin{theorem}
\label{th:H0} Suppose that every $\G$-orbit has at least 3 elements.
Then the correspondence $1_{\be(F)}\mapsto \tau_F$ induces a well defined epimorphism
from $H_0(\G, \Z/2\Z)$ to $\symm(\G)/\alt(\G)$. 
\end{theorem}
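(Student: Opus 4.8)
The plan is to build the homomorphism $\Phi$ directly from the presentation of $H_0(\G,\Z/2\Z)$ stated just above, exploiting the fact that $\symm(\G)$ is generated by the involutions $\tau_F$. Indeed, for a degree\nobreakdash-$2$ multisection $\F=\{F_{i,j}\}_{i,j=1}^2$ the nontrivial element of $\symm(\F)$ is precisely $\tau_{F_{1,2}}$, so $\symm(\G)=\symm_2(\G)$ is generated by the elements $\tau_F$ where $F$ runs over bisections with $\be(F)\cap\en(F)=\emptyset$. Hence, once $\Phi$ is known to be a well-defined homomorphism with $\Phi(1_{\be(F)})=\tau_F\alt(\G)$, surjectivity is automatic: its image contains every generator $\tau_F\alt(\G)$ of $\symm(\G)/\alt(\G)$. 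So the entire content is well-definedness.

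First I would record the matching generation statement on the homology side: since every $\G$-orbit has at least two points, around each $x\in\G^{(0)}$ there is a bisection $F$ with $x\in\be(F)$ and $\be(F)\cap\en(F)=\emptyset$ (as in the proof of Lemma~\ref{lem:smallmsections}), and by compactness and disjointification every clopen $U\subset\G^{(0)}$ admits a partition $U=\bigsqcup_i\be(F_i)$ with all $\be(F_i)\cap\en(F_i)=\emptyset$; thus $1_U=\sum_i 1_{\be(F_i)}$ in $H_0(\G,\Z/2\Z)$, and the classes $1_{\be(F)}$ of this special kind generate $H_0(\G,\Z/2\Z)$. Unwinding the complex, $H_0(\G,\Z/2\Z)=C_c(\G^{(0)},\Z/2\Z)\big/\langle 1_{\be(B)}+1_{\en(B)}:B\text{ a bisection}\rangle$, where $C_c(\G^{(0)},\Z/2\Z)$ is the Boolean group of clopen subsets of $\G^{(0)}$. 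Therefore it suffices to prove: (i) if $F_1,\dots,F_n$ have disjoint source and range and $\sum_i 1_{\be(F_i)}=0$ in $C_c(\G^{(0)},\Z/2\Z)$, then $\tau_{F_n}\cdots\tau_{F_1}\in\alt(\G)$; and (ii) for each bisection $B$, any two disjointifications $1_{\be(B)}=\sum_i 1_{\be(F_i)}$ and $1_{\en(B)}=\sum_j 1_{\be(G_j)}$ give $\tau_{F_n}\cdots\tau_{F_1}\equiv\tau_{G_m}\cdots\tau_{G_1}\pmod{\alt(\G)}$.

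The engine behind both is a \emph{parity lemma}: if $F$ and $G$ are bisections with $\be(F)=\be(G)=U$ and $\en(F)\cap U=\en(G)\cap U=\emptyset$, then $\tau_G\tau_F\in\alt(\G)$. I would establish it in stages. When $U$, $\en(F)$, $\en(G)$ are pairwise disjoint, a direct computation identifies $\tau_G\tau_F$ with the parallel $3$\nobreakdash-cycle $\F_{(132)}$ of the degree\nobreakdash-$3$ multisection with components $U,\en(F),\en(G)$ and bisections $F_{2,1}=F$, $F_{3,1}=G$, $F_{3,2}=GF^{-1}$, so it lies in $\alt(\F)\subset\alt(\G)$. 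When $\en(F)=\en(G)=W$ (where, in the non-Hausdorff case, $F$ and $G$ may carry different germs), I would first use Lemma~\ref{lem:smallmsections}, the hypothesis that orbits have at least three points, and compactness to reduce, via Proposition~\ref{pr:covering}, to the situation where $U$ also carries a bisection $H$ with $\en(H)$ disjoint from $U\cup W$; then a direct check gives $\tau_G\tau_F=\mathcal{M}'_{(123)}\bigl(\mathcal{M}_{(123)}\bigr)^{-1}$, where $\mathcal{M}$, $\mathcal{M}'$ are the degree\nobreakdash-$3$ multisections on components $U,W,\en(H)$ with $M_{2,1}=F$, $M'_{2,1}=G$, and $M_{3,1}=M'_{3,1}=H$ (the auxiliary component $\en(H)$ is cancelled by taking the product of the two $3$\nobreakdash-cycles), so again $\tau_G\tau_F\in\alt(\G)$. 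The general case reduces to these by partitioning $U$ finely and reassembling with Propositions~\ref{pr:covering} and~\ref{pr:unionintersection}, using that on a small enough clopen piece the ranges of the restrictions of $F$ and $G$ are either disjoint or (on the open locus where the two bisections coincide) equal with equal germs. Granting the parity lemma, it follows that $\tau_F\bmod\alt(\G)$ depends only on the clopen set $\be(F)$ and is multiplicative over partitions, which is exactly (i); and (ii) follows because the bisection $B$ transports, piece by piece, a disjointification of $1_{\be(B)}$ into one of $1_{\en(B)}$, so the two products differ by a product of parallel $3$\nobreakdash-cycles lying in $\alt(\G)$ (again invoking Lemma~\ref{lem:smallmsections} and Proposition~\ref{pr:covering} when $\be(B)$ and $\en(B)$ overlap).

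The main obstacle I expect is the parity lemma itself, and in particular the non-Hausdorff bookkeeping it requires: two bisections with a common source may share germs on a closed set with empty interior, so the reduction to ``pairwise disjoint ranges'' cannot be carried out by a single clopen partition, and one must split off the open locus where the bisections literally agree (there $\tau_F=\tau_G$, so it contributes nothing) and dispatch the rest by compactness together with Proposition~\ref{pr:covering}; a similar subtlety arises in (ii) when $\be(B)\cap\en(B)\neq\emptyset$ and when an orbit is entirely contained in $\be(B)\cup\en(B)$, which is handled by passing to an auxiliary third component. Once the parity lemma is available in this robust form, the verification of (i) and (ii) — hence well-definedness of $\Phi$ — together with the surjectivity noted in the first paragraph, completes the proof.
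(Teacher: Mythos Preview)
Your approach is essentially the paper's: your ``parity lemma'' (that $\tau_F\bmod\alt(\G)$ depends only on $\be(F)$) is exactly the paper's Lemma~\ref{lem:F1F2}, and your two stages match the paper's Lemmas~\ref{lem:F1F2easy} and~\ref{lem:F1F2}. But there is a genuine gap in the passage from the parity lemma to your statement (i). The parity lemma together with multiplicativity over partitions does \emph{not} yield (i): you also need that the elements $t_U:=\tau_F\alt(\G)$ (for $\be(F)=U$) \emph{commute} in $\symm(\G)/\alt(\G)$, and this is not automatic. For disjoint clopen $U,V$ there need not exist any bisection $H$ with $\be(H)=U\sqcup V$ and $\en(H)\cap(U\sqcup V)=\emptyset$ (take $U\sqcup V=\G^{(0)}$, or let some orbit lie entirely in $U\sqcup V$), so you cannot simply write $t_Ut_V=t_{U\sqcup V}=t_Vt_U$. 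The paper proves commutativity as a separate step: for $x_1\in U_1$, $x_2\in U_2$ in different orbits one chooses $F_1,F_2$ with $\be(F_1),\en(F_1),\be(F_2),\en(F_2)$ pairwise disjoint so that $\tau_{F_1},\tau_{F_2}$ already commute in $\symm(\G)$; for $x_1,x_2$ in the same orbit one takes a bisection $F$ from a neighborhood of $x_1$ to one of $x_2$, checks directly that $\tau_{FV_1}$ and $\tau_{V_2F}$ commute in $\symm(\G)$ (both are swaps along $F$, and on their common support they agree), and then invokes the parity lemma to identify these with $t_{V_1},t_{V_2}$. Without this, your map is not well defined on the abelian group $C_c(\G^{(0)},\Z/2\Z)$, so neither (i) nor (ii) goes through.

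A secondary issue: your reduction of the general parity lemma to the two cases ``ranges pairwise disjoint'' and ``$\en(F)=\en(G)$'' does not cover the point $x$ at which $\en(Fx)=\en(Gx)$ but this equality fails in every clopen neighborhood of $x$ (and the germs $Fx,Gx$ may differ, so $x$ is not in your ``open locus where the bisections coincide'' either). The paper's device here is cleaner than forcing the ranges to agree: at such an $x$ choose a third orbit point $y\notin\{x,\en(Fx)\}$ and a small bisection $H$ with $x\in\be(H)$ and $y\in\en(H)$; on a small enough neighborhood both $F$ and $G$ fall under your disjoint-ranges case relative to $H$, giving $t_{FU}=t_{HU}=t_{GU}$. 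This is essentially your Case~2 argument, but applied without the hypothesis $\en(F)=\en(G)$.
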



\begin{proof}
Note that for every $x\in\G^{(0)}$ and for every sufficiently small clopen
neighborhood $U\subset\G^{(0)}$ of $x$ there exists a bisection $F$
such that $\be(F)=U$ and $\en(F)\cap\be(F)=\emptyset$. It follows that elements of
the form $1_{\be(F)}$ for bisections $F\subset\G$ such that
$\be(F)\cap\en(F)=\emptyset$ generate $H_0(\G, \Z/2\Z)$.

Let us denote by $t_F$ the image of $\tau_F$ in $\symm(\G)/\alt(\G)$.
We have to show that the generators $t_F$ of $\symm(\G)/\alt(\G)$
depend only on $\be(F)$, and satisfy the defining relations of
$H_0(\G, \Z/2\Z)$.

The generators $t_F$ of $\symm(\G)/\alt(\G)$ are obviously of order 2,
and if $F=F_1\sqcup F_2\sqcup\cdots\sqcup F_n$ is a decomposition of $F$ into a disjoint union of  bisections, then $t_F=t_{F_1}t_{F_2}\cdots t_{F_n}$.

\begin{lemma}
\label{lem:F1F2easy}
If bisections $F_1, F_2$ are such that $\be(F_1)=\be(F_2)$, and the sets
$\be(F_1)$, $\en(F_1)$, $\en(F_2)$, and $\be(F_1)$ are pairwise disjoint, then $t_{F_1}=t_{F_2}$.
\end{lemma}

\begin{proof}
The element $\tau_{F_1}\tau_{F_2}^{-1}$ belongs to $\alt(\G)$.
\end{proof}

\begin{lemma}
\label{lem:F1F2}
The element $t_F$ depends only on $\be(F)$.
\end{lemma}

\begin{proof}
Suppose that bisections $F_1$ and $F_2$ are such that
$\be(F_1)=\be(F_2)$ and
$\be(F_1)\cap\en(F_1)=\be(F_2)\cap\en(F_2)=\emptyset$. Let
$x\in\be(F_i)$. If $\en(F_1x)\ne\en(F_2x)$, then for all sufficiently small
clopen neighborhoods $U$ of $x$ the bisections $F_1U$ and $F_2U$ will
satisfy the conditions of Lemma~\ref{lem:F1F2easy}, hence
$t_{F_1U}=t_{F_2U}$. Suppose that $\en(F_1x)=\en(F_2x)$. Then there
exists $y$ in the orbit of $x$ different from $x$ and from
$\en(F_ix)$. Let $H$ be a bisection such that $x\in\be(H)$ and
$y\in\en(H)$. Then for all sufficiently small clopen neighborhoods $U$
of $x$, by Lemma~\ref{lem:F1F2easy}, we have $t_{HU}=t_{F_1U}$ and
$t_{HU}=t_{F_2U}$, which implies that $t_{F_1U}=t_{F_2U}$.

We have proved that for every $x\in\be(F_i)$ and all sufficiently
small clopen neighborhoods $U$ of $x$ we have $t_{F_1U}=t_{F_2U}$. It
follows that we can decompose $\be(F_i)$ into a finite disjoint union
$\be(F_i)=U_1\sqcup U_2\sqcup\cdots\sqcup U_n$ of clopen sets such
that $t_{F_1U_i}=t_{F_2U_i}$ for all $i$. But then
$t_{F_1}=t_{F_1U_1}t_{F_1U_2}\cdots
t_{F_1U_n}=t_{F_2U_1}t_{F_2U_2}\cdots t_{F_2U_n}=t_{F_2}$.
\end{proof}

It remains to show that the elements $t_F$ commute. In view of
Lemma~\ref{lem:F1F2}, by a slight abuse of notation we will
denote by $t_U$ any element $t_F$ such that $U=\be(F)$. The
element $t_U$ is defined for all sufficiently small clopen sets $U$.

Let $U_1, U_2\subset\G^{(0)}$ be clopen sets such that $t_{U_1}$ and
$t_{U_2}$ are defined. Then $t_{U_1}=t_{U_1\setminus U_2}t_{U_1\cap
  U_2}$ and $t_{U_2}=t_{U_2\setminus U_1}t_{U_1\cap U_2}$. It follows
that it is sufficient to prove that $t_{U_1}$ and $t_{U_2}$ commute if
$U_1$ and $U_2$ are disjoint (since then we will conclude that
$t_{U_1\setminus U_2}, t_{U_2\setminus U_1}$, and $t_{U_1\cap U_2}$
pairwise commute).

Suppose that $U_1$ and $U_2$ are disjoint. Let $x_i\in U_i$. 
If $x_1$ and $x_2$ belong to one $\G$-orbit, then there exists a bisection $F$ such that $x_1\in\be(F)$, $x_2\in\en(F)$, and $\be(F)\cap\en(F)=\emptyset$. Then for any clopen sets $V_1\subset\be(F)$ and $V_2\subset\en(F)$ the elements $t_{FV_1}$ and $t_{V_2F}$ commute. But $t_{FV_1}=t_{V_1}$ and $t_{V_2F}=t_{V_2}$ by Lemma~\ref{lem:F1F2}. We have shown that there exist neighborhoods $W_1=\be(F)$, $W_2=\en(F)$ of $x_1$ and $x_2$, respectively, such that for all clopen sets $V_i\subset W_i$ the elements $t_{V_1}$ and $t_{V_2}$ commute.

Suppose now that $x_1$ and $x_2$ belong to different $\G$-orbits. Then there exist elements
$\gamma_i\in\G$ such that $\be(\gamma_i)=x_i$, and points $x_1, x_2,
\en(\gamma_1), \en(\gamma_2)$ are pairwise different. It follows that
there exist bisections $F_1, F_2$ such that $\gamma_i\in F_i$,
and the sets $\be(F_1), \en(F_1), \be(F_2), \en(F_2)$ are pairwise
disjoint. Then for any clopen subsets
$V_i\in\be(F_i)$, the elements $\tau_{F_1V_1}$ and $\tau_{F_2V_2}$
commute in $\symm(\G)$, hence $t_{V_1}$ and $t_{V_2}$ commute in
$\symm(\G)/\alt(\G)$. 
We have shown again that there exist neighborhoods $W_i=\be(F_i)$ of $x_i$ such that for any clopen subsets $V_i\subset  W_i$ the elements $t_{V_1}$ and $t_{V_2}$ commute.

It follows then from the compactness of $U_1$ and $U_2$ that we can decompose $U_1$ and $U_2$ into finite
disjoint unions of clopen sets $U_1=A_1\sqcup A_2\sqcup\ldots\sqcup A_n$ and
$U_2=B_1\sqcup B_2\sqcup\ldots\sqcup B_m$ such that $t_{A_i}$ and
$t_{B_j}$ commute for all $i$ and $j$. But this implies that
$t_{U_1}=t_{A_1}\cdot t_{A_2}\cdots t_{A_n}$ and
$t_{U_2}=t_{B_1}t_{B_2}\cdots t_{B_m}$ commute.
\end{proof}

It was shown by H.~Matui (see~\cite[Theorem~7.5]{matui:etale}
and~\cite[Theorem~7.13]{matui:etale}) that for almost finite groupoids
of germs the index map $I:\full{\G}\arr H_1(\G, \Z)$ is surjective,
and its kernel $\mathsf{K}(\G)$ is equal to $\symm(\G)$. Recall that if $\G$ is in addition
minimal, then $\alt(\G)=\mathsf{D}(\G)$. For groupoids in this class
we get from Theorem~\ref{th:H0} the following description of the abelianization of the
topological full group.

\begin{proposition}
Let $\G$ be an almost finite minimal groupoid of germs. Then the
abelianization $\full\G/\mathsf{D}(\G)$ is an extension of a quotient
of $H_0(\G, \Z/2\Z)$ by $H_1(\G, \Z)$, i.e., there exists an exact
sequence
\[H_0(\G, \Z/2\Z)\arr\full\G/\mathsf{D}(\G)\arr H_1(\G, \Z)\arr 0.\]
If $\G$ is expansive, and the groups $H_0(\G, \Z/2\Z)$ and $H_1(\G,
\Z)$ are finitely generated, then $\full\G$ is finitely generated.
\end{proposition}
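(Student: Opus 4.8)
The plan is to assemble the claimed exact sequence from three ingredients, all available to us: H.~Matui's results that for an almost finite groupoid of germs the index map $I:\full\G\arr H_1(\G,\Z)$ is surjective with kernel $\mathsf{K}(\G)=\symm(\G)$; the equality $\alt(\G)=\mathsf{D}(\G)$, which holds because $\alt(\G)$ is contained in the non-trivial normal subgroup $\mathsf{D}(\G)$ by Theorem~\ref{th:simple}, while $\mathsf{D}(\G)$ is contained in every non-trivial subgroup of $\full\G$ normalized by $\mathsf{D}(\G)$ (Matui, in the minimal almost finite case) and $\alt(\G)$ is such a subgroup; and Theorem~\ref{th:H0}, which supplies an epimorphism $H_0(\G,\Z/2\Z)\arr\symm(\G)/\alt(\G)$.

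First I would observe that, since $\full\G/\mathsf{D}(\G)$ is abelian, the index map descends to a homomorphism $\bar I:\full\G/\mathsf{D}(\G)\arr H_1(\G,\Z)$, which is surjective and has kernel $\mathsf{K}(\G)/\mathsf{D}(\G)=\symm(\G)/\alt(\G)$. Composing the epimorphism of Theorem~\ref{th:H0} with the inclusion $\symm(\G)/\alt(\G)\hookrightarrow\full\G/\mathsf{D}(\G)$ gives a homomorphism $j:H_0(\G,\Z/2\Z)\arr\full\G/\mathsf{D}(\G)$ whose image is exactly $\symm(\G)/\alt(\G)=\ker\bar I$. Hence the sequence
\[H_0(\G,\Z/2\Z)\stackrel{j}{\arr}\full\G/\mathsf{D}(\G)\stackrel{\bar I}{\arr}H_1(\G,\Z)\arr 0\]
is exact, which is the first assertion: $\full\G/\mathsf{D}(\G)$ is an extension of $\mathrm{im}\,j$, a quotient of $H_0(\G,\Z/2\Z)$, by $H_1(\G,\Z)\cong\mathrm{im}\,\bar I$.

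For the second assertion, suppose $\G$ is expansive and that $H_0(\G,\Z/2\Z)$ and $H_1(\G,\Z)$ are finitely generated. Because $\G$ is minimal and $\G^{(0)}$ is a Cantor set, every orbit is dense, hence infinite, and in particular has at least $5$ points; Theorem~\ref{th:finitelygenerated} then shows that $\alt(\G)=\mathsf{D}(\G)$ is finitely generated. The exact sequence just established exhibits $\full\G/\mathsf{D}(\G)$ as an extension of the finitely generated abelian group $\mathrm{im}\,j$ (a quotient of $H_0(\G,\Z/2\Z)$) by the finitely generated abelian group $H_1(\G,\Z)$, so $\full\G/\mathsf{D}(\G)$ is finitely generated. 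Finally $\full\G$ is an extension of $\mathsf{D}(\G)$ by $\full\G/\mathsf{D}(\G)$; since an extension of a finitely generated group by a finitely generated group is finitely generated, $\full\G$ is finitely generated.

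The only real content is the bookkeeping with Matui's theorems, which we cite; there is no serious obstacle. The point to be careful about is exactness at the middle term, i.e.\ the identification of $\mathrm{im}\,j$ with $\ker\bar I$, which uses both the surjectivity in Theorem~\ref{th:H0} and the equality $\mathsf{D}(\G)=\alt(\G)$.
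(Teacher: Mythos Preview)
Your proposal is correct and follows exactly the approach the paper intends: the proposition is presented in the paper as an immediate consequence of the paragraph preceding it, which cites Matui's results that $I$ is surjective with $\mathsf{K}(\G)=\symm(\G)$ and that $\mathsf{D}(\G)=\alt(\G)$ in the minimal almost finite case, together with Theorem~\ref{th:H0}. Your argument simply spells out this assembly (and the finite-generation claim via Theorem~\ref{th:finitelygenerated}) in detail, which is precisely what is required.
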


See more on abelianization of the full group in~\cite{matui:product}.

\begin{example}
The epimorphism $H_0(\G, \Z/2\Z)\arr\symm(\G)/\alt(\G)$ is not
necessarily one-to-one. Consider the groupoid $\G$ with the space of
units $\G^{(0)}$ equal to the space $\{1, 2, 3\}^{\mathbb{N}}$ of
infinite sequences $x_1x_2\ldots$ of elements of the alphabet $X=\{1,
2, 3\}$. Consider the inverse semigroup generated by the
transformations $T_x:x_1x_2\ldots\mapsto xx_1x_2\ldots$, for $x\in X$,
and the transformation $a$ given by the recursive rules:
\[a(1x_2x_3\ldots)=2x_2x_3\ldots,\quad
a(2x_2x_3\ldots)=1a(x_2x_3\ldots),\quad a(3x_2x_3\ldots)=3x_2x_3\ldots.\]
Let $\G$ be the groupoid of germs of this semigroup.

The full group $\full\G$ consists of transformations of the following
type. Let $v_1, v_2, \ldots, v_n$ and $u_1, u_2, \ldots, u_n$ be two
sequences of finite words such that $X^{\mathbb{N}}$ is equal to the
disjoint unions $v_1X^{\mathbb{N}}\cup v_2X^{\mathbb{N}}\cup\cdots\cup
v_nX^{\mathbb{N}}$ and $u_1X^{\mathbb{N}}\cup u_2X^{\mathbb{N}}\cup\cdots\cup
u_nX^{\mathbb{N}}$. Let $a^{m_1}, a^{m_2}, \ldots, a^{m_n}$ be arbitrary
elements of the infinite cyclic group generated by $a$.
Then the corresponding element of $\full\G$ is the (set of
germs of the) homeomorphism $v_iw\mapsto u_ia^{m_i}(w)$. We will
denote such an element by the table $\left(\begin{array}{cccc}v_1 &
    v_2 & \ldots & v_n\\ a^{m_1} & a^{m_2} & \ldots & a^{m_n}\\ u_1 & u_2 & \ldots
    & u_n\end{array}\right)$. See more for the groups of this type
in~\cite{nek:bim,nek:fpresented}. It is not hard to show that $H_0(\G, \Z/2\Z)=\Z/2\Z$, where
image of $1_{vX^{\mathcal{N}}}$ is the generator of $\Z/2\Z$ for every
finite word $v$.

The element $\left(\begin{array}{ccc}1 & 2  & 3\\ a & a^{-1} & 1\\ 2 & 1 &
    3\end{array}\right)$ belongs to $\symm(\G)$, and its image in
$\symm(\G)/\alt(\G)$ is equal to the image of the generator of
$H_0(\G, \Z/2\Z)$.
The same is true for the element $\left(\begin{array}{ccc}1 & 2  & 3\\ 1 & 1 & 1\\ 2 & 1 &
    3\end{array}\right)$. It follows that $g=\left(\begin{array}{ccc}1 &
    2 & 3\\ a & a^{-1} & 1\\ 1 & 2 & 3\end{array}\right)$ belongs to
$\alt(\G)$.  The element $g$ is conjugate in $\symm(\G)$ to the
element
\[h=\left(\begin{array}{ccccc} 11 & 12 & 13 & 2 & 3\\ 1 & a & 1 & a^{-1} &
    1\\ 11 & 12 & 13 & 2 & 3\end{array}\right).\]
According to the recursive definition of $a$, the
element $g$ can be  represented by the table
\[g=\left(\begin{array}{ccccc} 11 & 12 & 13 & 2 & 3\\ 1 & a & 1 & a^{-1} &
    1\\ 12 & 11 & 13 & 2 & 3\end{array}\right).\]
It follows that
\[gh=\left(\begin{array}{ccccc} 11 & 12 & 13 & 2 & 3\\ 1 & 1 & 1 & 1 & 1
    \\ 12 & 11 & 13 & 2 & 3\end{array}\right)\]
belongs to $\alt(\G)$. But its image in $\symm(\G)/\alt(\G)$ must be equal to the
image of the generator of $H_0(\G, \Z/2\Z)$. It follows that
$\symm(\G)/\alt(\G)$ is trivial.

Other examples of the cases when the epimorphism $H_0(\G, \Z/2\Z)\arr\symm(\G)/\alt(\G)$ is not an isomorphism are described in~\cite{matui:product}.
\end{example}

\end{document}